\newtheorem{theorem}{Theorem}
\newtheorem{lemma}{Lemma}
\pgfplotsset{compat=1.12}
\newcommand{\di}{\displaystyle}
\DeclareSymbolFont{rsfs}{U}{rsfs}{m}{n}
\DeclareSymbolFontAlphabet{\mathscrsfs}{rsfs}
\newtheorem{prop}{Proposition}
\definecolor{mg}{RGB}{255,0,255}
\theoremstyle{remark}
\newtheorem{remark}{Remark}
\title{{Spatio-temporal dynamics in  a diffusive Bazykin model: effects of group defense and prey-taxis}}
\author{ Subrata Dey, Malay Banerjee, S. Ghorai\thanks{Corresponding author}\\[1em]
 Department of Mathematics and Statistics,
IIT Kanpur,
 India}
\date{}
\begin{document}
\maketitle
\vspace{-2em}
{\center
E-mails:  subratad@iitk.ac.in, malayb@iitk.ac.in, sghorai@iitk.ac.in\\[2mm]  }
% \begin{center}
% \large {\textbf{ Spatio-temporal dynamics in  Bazykin model with group defense and prey-taxis}}
% \end{center}
%\textcolor{magenta}{make $SN_1$ $SN_I$ uniform. Same with $SN_2$ and $SN_{II}$} 
\begin{abstract}

Mathematical modeling and analysis of spatial-temporal population distributions of interacting species have gained significant attention in biology and ecology in recent times. In this work, we investigate a Bazykin-type prey-predator model with a non-monotonic functional response to account for the group defense among the prey population. Various local and global bifurcations are identified in the temporal model. Depending on the parameter values and initial conditions, the temporal model can exhibit long stationary or oscillatory transient states due to the presence of a local saddle-node bifurcation or  a global saddle-node bifurcation of limit cycles, respectively. We further incorporate the movement of the populations consisting of a  diffusive flux modelling random motion and an advective flux modelling group defense-induced prey-taxis of the predator population. The global existence and boundedness of the spatio-temporal solutions are established using $L^p$-$L^q$ estimate.   We also demonstrate the existence of a non-homogeneous stationary solution near the Turing thresholds using weakly nonlinear analysis. A few interesting phenomena, which include extinction inside the Turing region, long stationary transient state, and non-homogeneous oscillatory solutions inside the Hopf region, are also identified. 
\end{abstract}

{\bf Keywords}  Group defense; Prey-taxis;  Pattern formation; Steady state; Amplitude equation; Bifurcation analysis; Transient dynamics\\

{\bf Mathematics Subject Classification }34C23;  35B32, 35B35, 35K57, 92D40

\section{Introduction}
Understanding complex dynamics of predator-prey interactions through mathematical modelling is a key topic in evolutionary biology and ecology. Since the seminal work of Lotka \cite{LotkaUNDAMPEDOD} and Volterra \cite{Volterra}, mathematical models have been developed to explore a diverse range of complex ecological phenomena that include Allee effects \cite{stephens1999allee}, group defense \cite{venturino2013spatiotemporal},  hunting cooperation \cite{alves2017hunting}, intra-guild predation \cite{kang2013dynamics}, any many others\cite{murray2002mathematical}. The incorporation of the movement of the population makes these models more realistic in the field of spatial ecology \cite{murray2001mathematical}. The movement of a population can be due to various reasons. The main reasons include the random motion of individuals and the directed motion of individuals in response to the gradient of some signals, among others. The first one is modelled using a diffusive flux and the latter one uses an advective flux in which the velocity field depends on the gradient of the signal.

%\textcolor{red}{write here about group defense that you have written later in the introduction. then introduce the model }

%Here, we consider the group defense of prey through the non-monotonic functional response and spatial prey-taxis movement of predators in the Bazykin-type prey-predator model. 

Let $N(T)$ and $P(T)$ respectively be  the densities of the prey and predator populations at time $T.$ The Bazykin type prey-predator model \cite{bazykin1976structural,arancibia2021bifurcation,avila2017bifurcation}, which incorporates intra-species 
 competition among predators, is given by:
\begin{subequations}
\begin{alignat}{4}
\frac{dN}{dT}&=NG(N)-F(N)P,\\
\frac{dP}{dT}&=\zeta F(N)P-\gamma P-\delta P^2,
\end{alignat}
\label{1}
\end{subequations}
where $G(N)$ denotes the per capita growth rate of the prey species, $F(N)$ is the prey-dependent functional response and $\zeta$ is the conversion coefficient. Further, $\gamma$ and $\delta$ respectively denote the natural mortality rate and intra-species competition of the predator population.

%Most of the literature considered  the functional response with the property $F(0)=0, $ strictly monotonic i.e., $F'(N)>0$  and $F(N)$ has a non-zero finite limit  as $N\rightarrow \infty.$ Some  well-known monotonic functional forms are 

%In the classical Lotka–Volterra model, a linear unbounded functional response $F(u)=\alpha u$ is assumed. To address the issue of unboundedness, Holling proposed the functional form as $\di{F(N)=\frac{\alpha N}{1+h \alpha N}},$ which often referred to as Holling type II functional response. Here $\alpha$ is the attacking rate of predator and $h$ is the handling time to capture a prey.  Assuming  $\alpha=\kappa N,$ there is another type of functional response known as Holling type-III function, which  is expressed as $\di{F(N) = \frac{\kappa N^2}{1+h \kappa N^2}}$. The similar characteristics between the above two functional responses are that both are monotonically increasing and both  have a non-zero finite limit  as $N\rightarrow \infty.$ There are some other functional response with similar characteristics:  $\di{F(N)=1-e^{-\alpha N}}$ (Ivelev), $\di{F(N)=\frac{\mu N^2}{\alpha+\beta N+N^2}}$ (Sigmoidal). 

The prey-predator interaction  is completely reliant on the functional response function $F(N)$. Therefore, the selection of an appropriate functional response is critical in determining the outcome of the predator-prey dynamics. The literature commonly favors strictly monotonic functional responses that increase with prey density with a finite upper bound, such as $\di{\frac{\alpha N}{1+\beta N}}$ (Holling type-II) \cite{dey2022analytical}, $\di{\frac{\alpha N^2}{1+\beta N^2}}$ (Holling type-III) \cite{li2008traveling}, $\di{\alpha(1-e^{-\beta N})}$ (Ivelev) \cite{wang2010pattern}, $\di{\frac{\alpha N^2}{1+\mu N+ \beta N^2}}$ (Sigmoidal) \cite{huang2014bifurcations},
 and $\di{\alpha\;\text{tanh}(\beta N)}$ (hyperbolic tangent) \cite{seo2018sensitivity}. However, experimental and observational evidence suggest that the assumption of  monotonicity in the functional response  is not always valid. For example, the experiment by Andrews \cite{andrews1968mathematical} in  microbial dynamics suggests that  higher nutrient concentrations can result in an inhibitory effect on microorganisms, which can be modelled using Monod–Haldane function $\di{F(N)=\frac{ \alpha N}{1+ \mu N+\beta N^2}}.$ %\textcolor{red}{provide reference for M-H fnal. resp.}.
Sokol and Howell \cite{sokol1981kinetics}  conducted a study on the uptake of phenol by Pseudomonas putida in continuous culture, where they found that a simplified form of the Monod-Haldane function, $\di{F(N)=\frac{ \alpha N}{1+ \beta N^2}}$, provided a better fit to their experimental data. The main  characterization of the non-monotonic functional response is that there exists a $N_m,$ such that \begin{equation}
F'(N) = \begin{cases}
    \geq 0\quad\text{for } 0\leq N\leq N_m,\\
    < 0\;\quad\text{for } N> N_m,
\end{cases} \text{ and } F(N) \rightarrow 0 \text{ as }N \rightarrow \infty.
\end{equation}

Group defense refers to the ability of the prey species to defend or conceal themselves more effectively, leading to a reduction or complete prevention of predation when their population size is sufficiently large \cite{xiao2001global,broer2007dynamics}. Tener \cite{tener1965muskoxen} provided a clear example of this phenomenon in which lone musk oxen are vulnerable to wolf attacks, while small herds of two to six oxen are occasionally attacked, but larger herds are never successfully attacked. This same pattern was also observed by Holmes and Bethel \cite{holmes1972modification} in their study of insect populations, where large swarms of insects made it difficult for predators to identify individual prey. Clearly, group defense can be incorporated into a prey-predator system using a non-monotonic functional response.

Here, we consider a Bazykin prey-predator model with a logistic growth rate for the prey species and a simplified form of the Monod-Haldane function, $\di{F(N)=\frac{ \alpha N}{1+ \beta N^2}}$, for the functional response. The governing equations are
\begin{subequations}
\begin{alignat}{4}
 \frac{dN}{dT}&=N(\sigma-\eta N)-\frac{\alpha NP}{1+\beta N^2},\\
 \frac{dP}{dT}&=\frac{\zeta\alpha NP}{1+\beta N^2}-\gamma P-\delta P^2,
\end{alignat}
\label{2}
\end{subequations}
where $\sigma$ and $\eta$ respectively represent the intrinsic growth rate and intra-species competition of the prey species.

%  A dimensionless version of (\ref{2}), using 
% $u=\frac{\eta N}{\sigma}$, $ v=\frac{\delta P}{\sigma}$ and $ t=\sigma T$  for dimensionless prey, predator and time, is    
% \begin{subequations}
% \begin{alignat}{4}
%  \frac{du}{dt}&=u(1-u) -\frac{auv}{1+bu^2} &\equiv F_1(u,v)&\equiv uf_1(u,v), \\
%  \frac{dv}{dt}&=\frac{eauv}{1+bu^2}-f v- v^2&\equiv F_2(u,v)&\equiv vf_2(u,v),
% \end{alignat}
% \label{ode}
% \end{subequations}
% where $ \di{a=\frac{\alpha }{\delta },\; b=\frac{\beta \sigma^2}{\eta^2}, e=\frac{\zeta \delta}{\eta},}$ and  $\di{f=\frac{\gamma}{\sigma}}$ are positive and  dimensionless parameters. The system is subjected to non-negative initial conditions $u(0)\geq0, v(0)\geq0$. 

Self-organized spatio-temporal pattern formation is a fundamental process that plays a critical role  in comprehending diverse  intricate  ecological  phenomena  in nature. The pioneering work of Turing on chemical morphogenesis \cite{Turing}  has played a pivotal role in advancing our understanding of pattern formation using the reaction-diffusion (RD) theory.
 RD models have since been extensively studied to explain the formation of patterns in various complex biological  systems such as %\textcolor{red}{fish skin pattern is not an ecological system}
 fish skin \cite{fish}, mussel bed   distribution \cite{cangelosi2015nonlinear}, %\textcolor{red}{insect wings pattern is not an ecological system}
 insect wings \cite{blagodatski2015diverse}, predator-prey interactions \cite{prey}, terrestrial vegetation \cite{vegetation}, and many others \cite{murray2001mathematical}. Spatial Turing and temporal Hopf instabilities are key mechanisms in the development of spatio-temporal patterns. The diffusion-driven Turing instability takes place when a small amplitude spatial perturbation about a stable homogeneous steady-state becomes unstable, resulting in stationary patterns like stripes,  spots, or a mixture of both \cite{renji2023,turing1}. In addition to stationary patterns, various dynamic patterns that include traveling waves, periodic traveling waves, target patterns,   spiral patterns,  quasi-periodic spatial patterns, and even spatio-temporal chaotic patterns are typically found due to the Hopf instability and Turing-Hopf instability \cite{turing3,sherratt1997oscillations}.

The diffusion mechanism  corresponds to the random movement of species from a higher concentration area to a lower concentration area. On the other hand, taxis is a directional movement of species in response to a particular stimulus, such as phototaxis due to light \cite{foster1980light,ghorai2005penetrative} and chemotaxis due to chemical gradients \cite{hillen2009user,sleeman2005existence}. Taxis allows species to move towards or away from their stimulus and it is critical in many ecological processes, including foraging, mating, migration, photosynthesis, and dispersal \cite{foster1980light,hillen2009user,weng2008migration}.There are two types of taxis in a prey-predator system depending on the movement of species \cite{wang2021pattern}. The directional movement of predator  species in response to the prey density is called prey-taxis and the opposite is called predator-taxis. These types of taxis-based movements play a significant role in  the formation of complex spatial patterns \cite{wang2021pattern,ainseba2008reaction,sapoukhina2003role,myerscough1998pattern}. A RD model with Rosenzweig–MacArthur kinetics is unable to form Turing structure \cite{turing1}, whereas the same system  in the presence of taxis can show stationary Turing patterns \cite{sapoukhina2003role,zhang2019pattern}.  We extend the temporal model \eqref{2} to include random movement due to diffusion and 
directed movement of the predator population because of prey-taxis. Thus, our spatio-temporal model becomes
\begin{subequations}
\begin{alignat}{4}
\frac{\partial N}{\partial T}&=  D_1\nabla^2 N+N(\sigma-\eta N)-\frac{\alpha NP}{1+\beta N^2},\; X\in \tilde\Omega,\;T>0,\\
\frac{\partial P}{\partial T}&= D_2\nabla^2  P+ \nabla\cdot ( \chi(N) P \nabla N)+ \frac{\zeta\alpha NP}{1+\beta N^2}-\gamma P-\delta P^2,\;X\in \tilde\Omega,\;T>0,
\end{alignat}
\label{pde2}
\end{subequations}
where  $\tilde{\Omega}\subset \mathbb{R}^n$ is a bounded  domain with boundary $\partial \tilde{\Omega}.$  Further, $D_1$ and $D_2$ respectively are the self-diffusion coefficients of the prey and predator species, and $\chi$ is the prey-taxis coefficient.  Note that $\chi<0$ and $\chi>0$ corresponds to attractant and repellent prey-taxis respectively \cite{gambino2018cross,luo2022global}. Due to group defense by the prey species, the predator species prefer low-density prey areas for their  hunting and avoid high-density prey areas. Our temporal model incorporates the group defense in prey species through a non-monotonic functional response. To take into account the group defense induced prey-taxis  in the RD model, we  take $\chi(N)=\chi_0>0,$  where $\chi_0$ is a constant for simplicity \cite{hillen2009user,luo2022global,maini1991bifurcating}.  A dimensionless version of (\ref{pde2}), using  $u=\frac{\eta }{\sigma}N$, $ v=\frac{\delta }{\sigma}P$, $ t=\sigma T$ and $x= \sqrt{\frac{\sigma}{D_1}}X$  for dimensionless prey, predator, time and space, is   
\begin{subequations}
\begin{alignat}{4}
\frac{\partial u}{\partial t}&=  \nabla^2 u+u(1-u) -\frac{auv}{1+bu^2},\; x\in \Omega,\;t>0\\
\frac{\partial v}{\partial t}&= d\nabla^2  v+c \nabla\cdot(v \nabla u)+ \frac{eauv}{1+bu^2}-f v- v^2,\;x\in \Omega,\;t>0,
\end{alignat}
\label{pde}
\end{subequations}
where $ \di{a=\frac{\alpha }{\delta }}$,\; $\di{b=\frac{\beta \sigma^2}{\eta^2}},$ $\di{c=\frac{\chi\sigma}{D_1\eta},}$ $\di{d=\frac{D_2}{D_1},}$
$ \di{e=\frac{\zeta \delta}{\eta}}$   and $\di{f=\frac{\gamma}{\sigma}},$  are dimensionless positive parameters. Here, $\Omega$ is the dimensionless domain with corresponding boundary $\partial\Omega,$ and $d$ is the ratio of self-diffusion of predator and prey.  Further, $c>0$ represents the dimensionless prey-taxis coefficient  that characterizes  the tendency of the predator population to keep away from the high-density prey areas. The system \eqref{pde} is subjected to non-negative initial conditions $u(x,0)\equiv u_0(x)$ and $v(x,0)\equiv v_0(x)$ for $x\in \Omega,$ and no-flux boundary conditions $\di{\frac{\partial u}{\partial n}=\frac{\partial v}{\partial n}=0}$ for $x\in \partial\Omega$ and $t>0.$

% \textcolor{magenta}{A dimensionless version of (\ref{2}), using 
% $u=\frac{\eta N}{\sigma}$, $ v=\frac{\delta P}{\sigma}$ and $ t=\sigma T$  for dimensionless prey, predator and time, is    
% \begin{subequations}
% \begin{alignat}{4}
%  \frac{du}{dt}&=u(1-u) -\frac{auv}{1+bu^2} &\equiv F_1(u,v)&\equiv uf_1(u,v), \\
%  \frac{dv}{dt}&=\frac{eauv}{1+bu^2}-f v- v^2&\equiv F_2(u,v)&\equiv vf_2(u,v),
% \end{alignat}
% \label{ode}
% \end{subequations}
% where $ \di{a=\frac{\alpha }{\delta },\; b=\frac{\beta \sigma^2}{\eta^2}, e=\frac{\zeta \delta}{\eta},}$ and  $\di{f=\frac{\gamma}{\sigma}}$ are positive and  dimensionless parameters. The system is subjected to non-negative initial conditions $u(0)\geq0, v(0)\geq0$. }

Long transient dynamics is currently an  important topic  for predicting and managing ecological systems in the face of environmental change \cite{morozov2020long}. It refers to the  slowly varying dynamics over a long  period of time before the system reaches its  final state.  This long transient dynamics can be complex and unpredictable, which may appear as  stationary, oscillatory or even chaotic \cite{morozov2020long,morozov2016long,lai2011transient}. In the context of dynamical system, these transient dynamics are guided by various temporal and spatio-temporal bifurcations. The length of this transient period depends on the distance of the control parameter from the bifurcation threshold and initial conditions \cite{morozov2020long,chowdhury2023attractors}. Here, we show  interesting long transient dynamics for the temporal and spatio-temporal system, which are unpredictable in their final state.

The predator species do not have any cooperative nature due to the Bazykin-type reaction kinetics in contrast to prey species for which group defense is a kind of cooperative behavior \cite{bazykin1976structural,arancibia2021bifurcation,avila2017bifurcation}. To the best of our knowledge, this is the first study to investigate group defense-induced prey-taxis in an ecological model. For the temporal model, we perform a  bifurcation analysis and illustrate some representative dynamics through one- and two-parametric  bifurcation diagrams.  We establish global existence and  boundedness of solutions for the spatio-temporal model with Neumann boundary conditions. The stability of the homogeneous steady states  and Turing instability are discussed for the spatio-temporal model. The existence of a non-homogeneous stationary solution is shown using weakly nonlinear analysis (WNA) which is then validated with numerical solution. A key strength of our work is the investigation of long transient dynamics.  We have  established the roles of local and global bifurcations on the  stationary and oscillatory transient dynamics exhibited by the temporal model. The fate of these transient dynamics under the influence of diffusion and taxis have also been investigated. For certain parameter values, appearance of steady homogeneous solution in Turing domain and non-homogeneous in space but oscillatory in time solution in Hopf domain are some other key findings of our work.   

The temporal model together with its equilibria, stability, local and global bifurcations are described in Section \ref{temporalmodel}. This section also contains long transient dynamics with numerical visualization. Section \ref{spatialmodel} consists of the global existence and boundedness of the spatio-temporal solution, stability analysis of the homogeneous steady states, and  Turing bifurcation. The derivation of the amplitude equation using WNA has been carried out in Section \ref{wna}. Using numerical simulations, we validate the results of WNA in Section \ref{numerical}. This section also contains an extensive range of numerical simulations that show long transient dynamics as well as various interesting stationary and dynamic patterns. Finally, the paper concludes with a comprehensive discussion in Section \ref{discussion}.

\section{Temporal model}\label{temporalmodel}
%\textcolor{magenta}{Anything I have to write here?}
Here, we discuss temporal dynamics corresponding to the homogeneous system of \eqref{pde}, i.e.,
\begin{subequations}
\begin{alignat}{4}
 \frac{du}{dt}&=u(1-u) -\frac{auv}{1+bu^2} &\equiv F_1(u,v)&\equiv uf_1(u,v), \\
 \frac{dv}{dt}&=\frac{eauv}{1+bu^2}-f v- v^2&\equiv F_2(u,v)&\equiv vf_2(u,v).
\end{alignat}
\label{ode}
\end{subequations}
The system \eqref{ode} is subjected to non-negative  initial conditions  $u(0)\geq0,$ $v(0)\geq0.$
\subsection{ Positivity and boundedness }
\begin{theorem}
Every solution $(u(t),v(t))$ of the system (\ref{ode}) with non-negative initial condition remains bounded and  non-negative for all time $t$.
\end{theorem}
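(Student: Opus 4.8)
The plan is to establish non-negativity first and then boundedness, treating each component in turn. For non-negativity, I would exploit the fact that the system is written in the form $\dot u = u f_1(u,v)$ and $\dot v = v f_2(u,v)$, so that the coordinate axes are invariant. Concretely, along any solution one has $u(t) = u(0)\exp\!\left(\int_0^t f_1(u(s),v(s))\,ds\right)$ and $v(t) = v(0)\exp\!\left(\int_0^t f_2(u(s),v(s))\,ds\right)$, provided the solution exists; since $f_1$ and $f_2$ are continuous (hence bounded on any finite-time orbit segment), the exponential factors are well-defined and strictly positive, so $u(0)\ge 0$ forces $u(t)\ge 0$ and likewise for $v$. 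This also shows the positive quadrant is forward-invariant.

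For boundedness of the prey, I would use the first equation together with the sign of the taxis-free interaction term: since $a,b,v\ge 0$ we have $\dot u \le u(1-u)$, and a standard comparison argument with the logistic equation gives $\limsup_{t\to\infty} u(t)\le 1$, and more precisely $u(t)\le \max\{u(0),1\}$ for all $t\ge 0$. For boundedness of the predator, I would introduce an auxiliary function such as $W(t) = e u(t) + v(t)$ (the weighting by the conversion efficiency $e$ is chosen so the functional-response terms cancel). Differentiating, the cross terms $\pm\, eauv/(1+bu^2)$ drop out, leaving $\dot W = e\,u(1-u) - f v - v^2$. Using the bound already obtained for $u$, the right-hand side is bounded above by something like $eu - fv - v^2 \le C_1 - f v$ for a constant $C_1$ depending only on the bound for $u$; completing the square in $v$ one gets $\dot W \le C_2 - \mu W$ for suitable positive constants $C_2,\mu$ (taking $\mu \le f$), whence Gronwall's inequality yields $W(t)\le \max\{W(0), C_2/\mu\}$, and therefore $v(t)\le W(t)$ is bounded for all $t$.

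The main obstacle is somewhat bookkeeping rather than conceptual: one must be careful that the estimates are valid on the maximal interval of existence and then invoke the standard continuation theorem to conclude the interval is in fact $[0,\infty)$. That is, the a priori bounds $0\le u\le \max\{u(0),1\}$ and $0\le v \le \max\{W(0),C_2/\mu\}$ derived above hold on $[0,T_{\max})$, and since the solution remains in a compact set it cannot blow up in finite time, so $T_{\max}=\infty$. A minor technical point to handle cleanly is the case $u(0)=0$ or $v(0)=0$, where the exponential representation degenerates but invariance of the axes still holds by uniqueness of solutions to the resulting reduced scalar equation; this just needs a sentence. I expect the whole argument to be short once the auxiliary function $W = eu+v$ is in hand.
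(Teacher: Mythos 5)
Your proposal is correct and complete; the paper itself omits the proof, stating only that it is ``straightforward,'' and your argument is precisely the standard one that the authors are implicitly invoking: forward invariance of the axes via the representation $u(t)=u(0)\exp\bigl(\int_0^t f_1\,ds\bigr)$ (and likewise for $v$), comparison with the logistic equation to get $u(t)\le\max\{u(0),1\}$, and the weighted sum $W=eu+v$ cancelling the functional-response terms so that $\dot W\le C_2-\mu W$ with $0<\mu\le f$ bounds $v$. Your attention to the continuation argument on the maximal interval of existence and to the degenerate cases $u(0)=0$ or $v(0)=0$ makes the write-up more careful than what the paper asserts without proof; there is nothing to correct.
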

\begin{proof}
The proof is straightforward and has been omitted.
\end{proof}

\subsection{Existence and stability of equilibria}
%\textcolor{magenta}{First discuss the existence of equilibria, then introduce jacobian, then stability of boundary and interior equilibria}
Here, we discuss the number and types of all possible  equilibria in $\mathbb{R}_2^+=\{(u,v):u\geq0,v\geq0\}$. 
The system (\ref{ode}) has trivial equilibrium point $E_0(0,0)$ and axial equilibrium point $E_1(1,0)$ irrespective of parameter values. 
\begin{figure}[!ht]
\begin{subfigure}[b]{.46\textwidth}
 \centering
\includegraphics[scale=0.5]{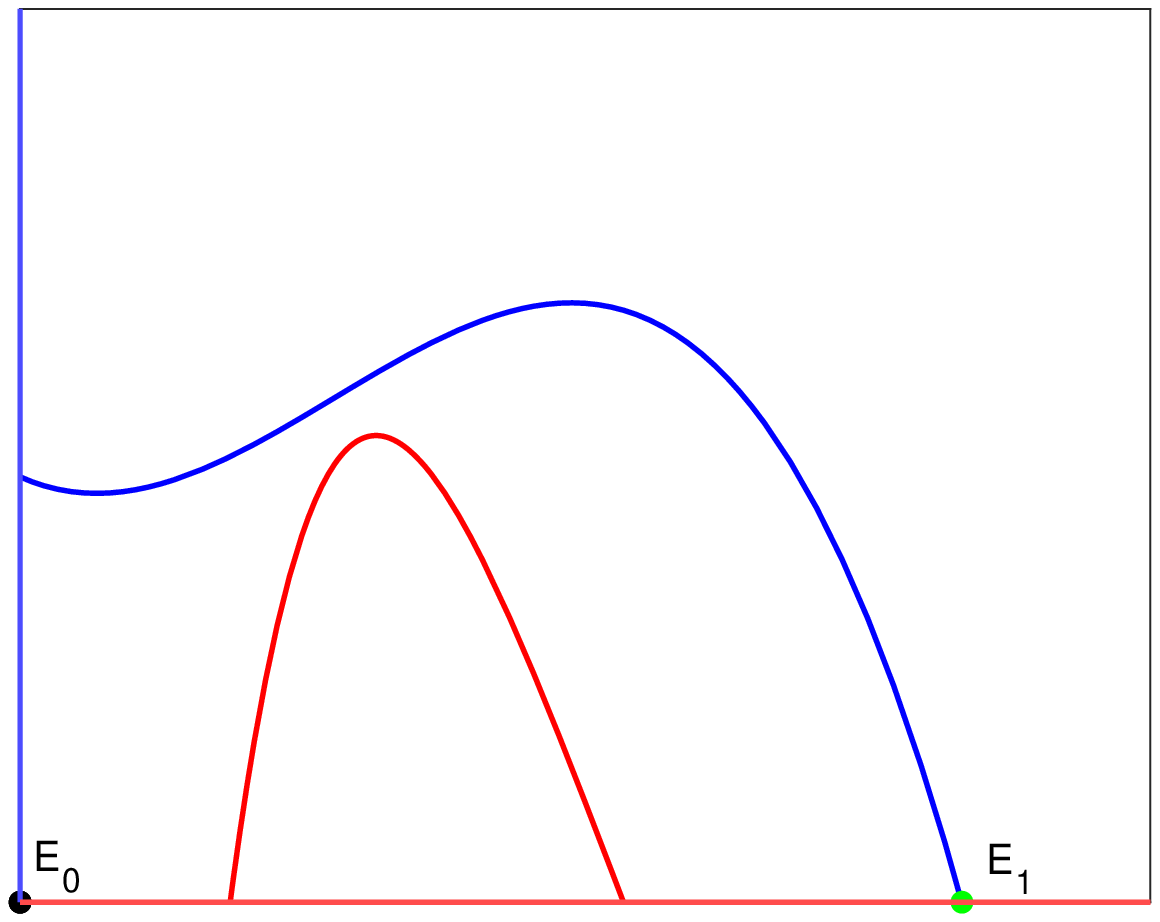}\\ 
 \caption{}
  \end{subfigure}
 \begin{subfigure}[b]{.46\textwidth}
 \centering
 \hspace*{-1.3cm}
\includegraphics[scale=0.5]{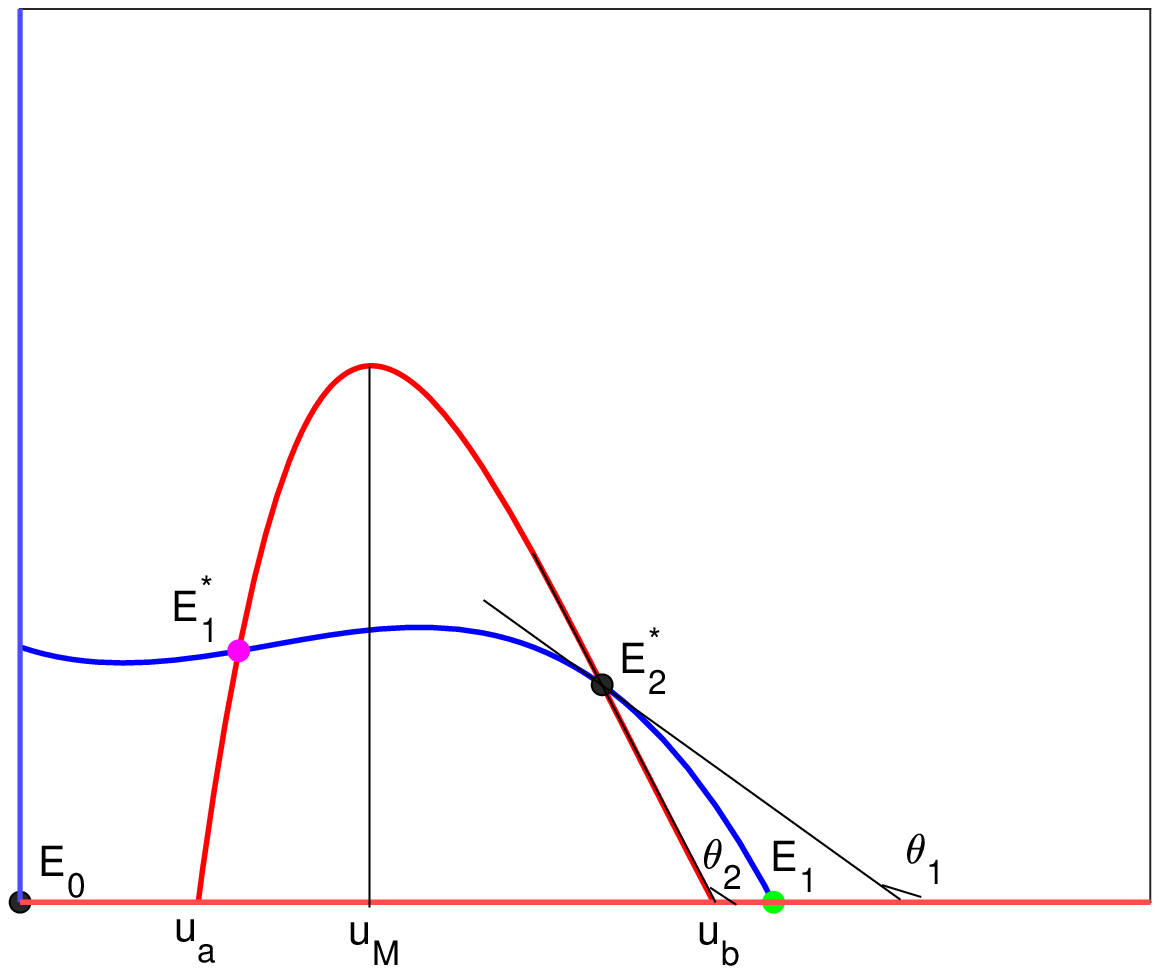}\\ 
 \caption{}
  \end{subfigure}
  \begin{subfigure}[b]{.46\textwidth}
 \centering
\includegraphics[scale=0.5]{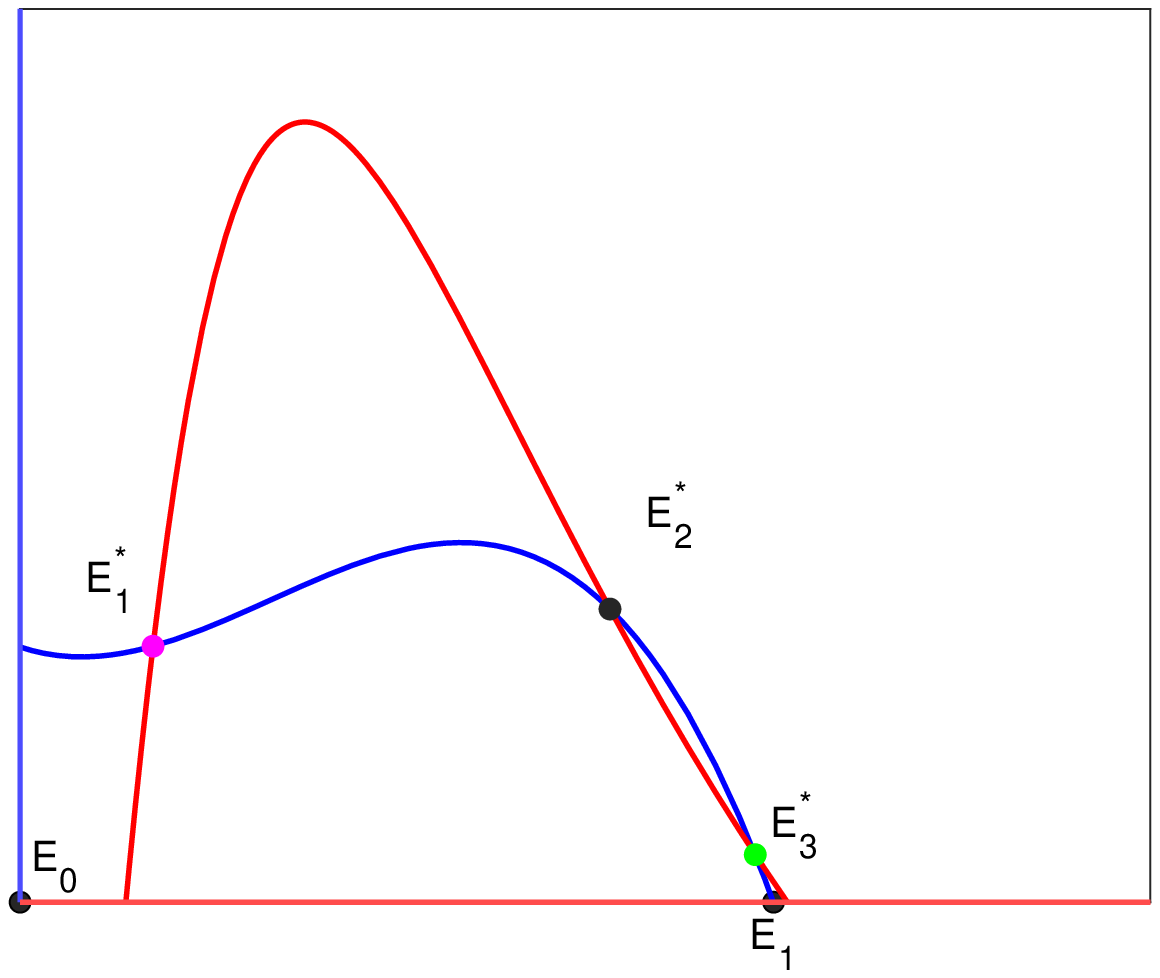}\\ 
 \caption{}
  \end{subfigure} 
  \hspace{1.5em}
 \begin{subfigure}[b]{.46\textwidth}
 \centering
\includegraphics[scale=0.5]{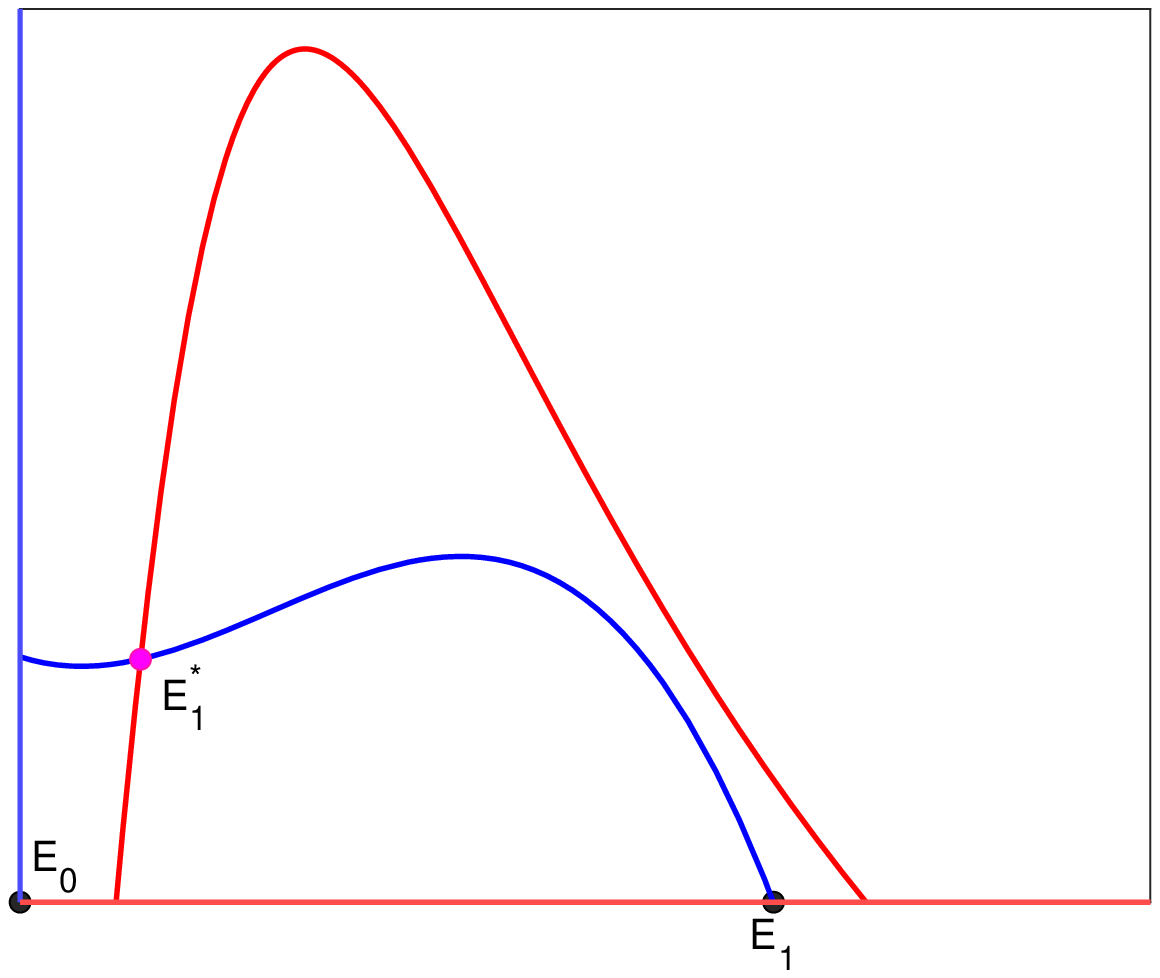}\\ 
 \caption{}
  \end{subfigure}
 \caption{Location of nullclines and equilibria of the system \eqref{ode} for all possible cases. Here  blue and red color curves denote the prey and predator nullclines respectively and dots represent various equilibria. Green and black dots represent the stable and saddle equilibria respectively and the stability of magenta dots depends on the Hopf bifurcation. } 
\label{nullcline}
\end{figure}

An interior equilibrium  $E_j^*(u_j^*,v_j^*)$ (where $j$ can be $1$,$2$ or $3$) is a point of intersection of the nontrivial prey  nullcline $\di{v=n(u):=\frac{(1+bu^2)(1-u)}{a}}$ and predator nullcline $\di{v=p(u):=\frac {eau}{bu^2+1}-f}.$  The  prey component $u_j^*$ of the interior equilibrium  $E_j^*$ satisfies 
\begin{equation}
Q(u)\equiv {b}^{2}{u}^{5}-{b}^{2}{u}^{4}+2 b{u}^{3}-b \left( af+2 \right) {u}^{2 }+ \left( {a}^{2}e+1 \right) u-(af+1) =0, \label{eqQ}
\end{equation}
and the component $v_j^*$ is obtained from
$$
v_j^*={\frac {eau_j^*}{b{u_j^*}^{2}+1}}-f.$$
Now, $n(u)\geq 0$ for $u\leq 1$ and $p(u)$ intersects  $u$ axis at two points, say $(u_a,0)$ and $(u_b,0),$ where 
$$\di{u_a={\frac {ea-\sqrt {{e}^{2}{a}^{2}-4 b{f}^{2}}}{2bf}}\;\mbox{ and }\; u_b={\frac {ea+\sqrt {{e}^{2}{a}^{2}-4 b{f}^{2}}}{2bf}}, }\;\; \text{ whenever } {e}^{2}{a}^{2}\geq 4 b{f}^{2}.
$$ 
Thus,  we must have $u_a<u^*<\text{min}\{1, u_b\}$ for feasibility of $E^*$. Also, $p(u)$ has a maximum at  $\di{u_M=\frac{1}{\sqrt{b}}}$.  The system \eqref{ode} has at least one coexisting equilibrium point when $p(u_M)\geq n(u_M).$  In the case of two equilibria, we label them as $E_1^*(u_1^*,v_1^*)$ and $E_2^*(u_2^*,v_2^*)$ with $\di{0<u_1^*<\frac{1}{\sqrt{b}}<u_2^*<1}$ [see Fig. \ref{nullcline}(b)].  The system can have three equilibria for parameter value $f_{SN_2}<f<f_{TC}$ with $\di{0<u_1^*<\frac{1}{\sqrt{b}}<u_2^*<u_{sn_1}<u_3^*<1},$ where $u_{sn_1}$ and $f_{SN_2}$ are discussed in the next subsection.  Depending on the parameter restriction, we summarize the number of equilibria in  Table \ref{Tab1}. Next, we discuss the stability of the different equilibria. The Jacobian of the system (\ref{ode}) at a point $E(u,v)$ is given by 
\begin{equation}
J(E)=\begin{bmatrix}
 1-2u+  {\frac {av \left( b{u}^{2}-1 \right) }{ \left( b{u}^{2}+1 \right) ^{2}
}} & -{\frac {au}{b{u}^{2}+1}}   \vspace{0.06in} \\
  {\frac {eav \left( 1-b{u}^{2} \right) }{ \left( b{u}^{2}+1 \right) ^{
2}}}
 &   {\frac {eau}{b{u}^{2}+1}}-f-2 v
\end{bmatrix}\equiv \begin{bmatrix}a_{10}& a_{01}\\b_{10} & b_{01} \end{bmatrix}. 
\label{jaco}
\end{equation}
Considering the  Jacobian matrix \eqref{jaco} at an equilibrium point, we have  the following propositions:
\begin{prop}
    The trivial equilibrium point $E_0$ is always a saddle point and the axial equilibrium point $E_1$ is a saddle point for $\di{f<f_{TC}:=\frac{ea}{b+1}}$ and  asymptotically stable for $\di{f>f_{TC}}$.
\end{prop}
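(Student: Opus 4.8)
The plan is to prove both assertions by linearization: evaluate the Jacobian \eqref{jaco} at each of the two boundary equilibria and read off the signs of the real parts of its eigenvalues. Since $E_0$ and $E_1$ will turn out to be hyperbolic (the latter for $f\neq f_{TC}$), the principle of linearized stability then transfers the linear conclusions to the nonlinear flow of \eqref{ode}, giving ``saddle'' when the eigenvalues have opposite signs and ``locally asymptotically stable'' when both are negative. First I would substitute $u=0$, $v=0$ into \eqref{jaco}: every entry that carries a factor of $v$ or of the functional response drops out, so $J(E_0)$ is diagonal with entries $a_{10}=1$ and $b_{01}=-f$. Because $f>0$, these two eigenvalues always have opposite signs, so $E_0$ is a saddle for every admissible choice of parameters.

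Next I would substitute $u=1$, $v=0$. The entry $b_{10}$ contains a factor $v$ and hence vanishes, so $J(E_1)$ is upper triangular and its eigenvalues are exactly the diagonal entries $a_{10}=1-2=-1$ and $b_{01}=\frac{ea}{b+1}-f$. The first is negative irrespective of the parameters; the second is positive precisely when $f<f_{TC}:=\frac{ea}{b+1}$ and negative when $f>f_{TC}$. Therefore $E_1$ is a saddle for $f<f_{TC}$ and a (locally asymptotically stable) node for $f>f_{TC}$, which is the stated claim. The borderline value $f=f_{TC}$, where a transcritical bifurcation exchanges stability between $E_1$ and an interior equilibrium, is non-hyperbolic and lies outside the scope of this proposition.

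There is essentially no obstacle: both equilibria lie on the invariant axis $v=0$, which forces the predator-on-prey coupling $b_{10}$ to vanish and makes each Jacobian triangular, so the eigenvalues are obtained from the diagonal with no further computation. The only matters deserving a word of care are that all parameters are strictly positive (so that $1>0$ and $-f<0$ with no sign ambiguity) and that hyperbolicity — available whenever $f\neq f_{TC}$ — is precisely what justifies inferring the behaviour of the full nonlinear system from its linearization.
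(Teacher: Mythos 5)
Your proof is correct and follows essentially the same route as the paper: evaluate the Jacobian \eqref{jaco} at $E_0$ and $E_1$, obtaining eigenvalues $1,-f$ and $-1,\frac{ea}{b+1}-f$ respectively, and conclude by linearized stability. Your additional remarks on the triangular structure of $J(E_1)$ and on hyperbolicity are sound but not needed beyond what the paper records.
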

\begin{proof}
    Since, the eigenvalues of $J(E_0)$ are 1 and $-f$, it is always a saddle point. The eigenvalues of $J(E_1)$ are  $-1$ and $\di{{\frac {ea}{b+1}}-f}.$ Therefore, $E_1$ is asymptotically stable when $\di{f>f_{TC}}$ and a saddle point when $f<f_{TC}.$ 
\end{proof}
\begin{table}[!t]
    \caption{Existence and stability of different equilibria of the system \eqref{ode}.}\vspace{-0.3cm}
  \begin{center}
    \begin{tabular}{|c|l|l|} 
    \hline
      \makecell[c]{\textbf{Equilibria}} & \textbf{Existence criteria } & \textbf{Stability criteria} \\ 
      \hline
      $E_0$& independent of parameter values & saddle point.\\\hline
      $E_1$ & independent of parameter values &  \makecell[l]{ a saddle point if $\di{f<f_{TC}}$,\\ and  asymptotically stable if $\di{f>f_{TC}}.$ } \\\hline
      $E_1^*$ &  $p(u_M)\geq n(u_M)$ and ${e}^{2}{a}^{2}\geq 4 b{f}^{2}$  & \makecell[l]{asymptotically stable if $\di{b<b_{H}}$, \\ and unstable  if $\di{b>b_{H}}$.}  \\\hline
      $E_2^*$ & \makecell[l]{$p(u_M)> n(u_M)$, ${e}^{2}{a}^{2}\geq 4 b{f}^{2}$ with \\either  $u_b<1$ or $u_b>1$ and $f_{SN_2}<f<f_{TC}$ } & saddle point.  \\\hline
       $E_3^*$ & \makecell[l]{$p(u_M)> n(u_M)$, ${e}^{2}{a}^{2}\geq 4 b{f}^{2}$, $u_b>1$ \\and $f_{SN_2}<f<f_{TC}$  } & asymptotically stable.\\\hline
          \end{tabular}\label{Tab1} \vspace{-1em}
  \end{center} 
\end{table}
\begin{prop} \label{prop2}
    For $p(u_M)\geq n(u_M)$ and ${e}^{2}{a}^{2}\geq 4 b{f}^{2}$, the system \eqref{ode} has at least one coexisting equilibrium and at most three different coexisting  equilibria. The following  hold for the stability of the co-existing equilibria:
    \begin{itemize}
        \item[(i)] $E_1^*$ is asymptotically stable for $b<b_H$ and unstable for $b>b_H$, where $b_H$ is defined in the text. 
        \item[(ii)] Whenever $E_2^*$ exists, it is always a saddle point.
        \item[(iii)] Whenever $E_3^*$ exists, it is always asymptotically stable.
    \end{itemize}
\end{prop}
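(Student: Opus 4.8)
The plan is to analyse the trace and determinant of the Jacobian \eqref{jaco} at a coexisting equilibrium $E^*=(u^*,v^*)$, after simplifying its entries with the nullcline identities $v^*=n(u^*)$ and $v^*=p(u^*)$. Writing $F_1=uf_1$, $F_2=vf_2$ and using $f_1(E^*)=f_2(E^*)=0$, the Jacobian collapses to $a_{10}=u^*\partial_u f_1$, $a_{01}=u^*\partial_v f_1$, $b_{10}=v^*\partial_u f_2$, $b_{01}=v^*\partial_v f_2$. Since $\partial_v f_1=-a/(1+bu^{*2})<0$ and $\partial_v f_2=-1$, one reads off $a_{01}=-au^*/(1+bu^{*2})<0$ and $b_{01}=-v^*<0$; and because $\partial_u f_1=\tfrac{a}{1+bu^{*2}}\,n'(u^*)$ and $\partial_u f_2=p'(u^*)$ are proportional to the two nullcline slopes, a short computation gives
\begin{equation*}
\det J(E^*)=\frac{au^*v^*}{1+bu^{*2}}\bigl(p'(u^*)-n'(u^*)\bigr),\qquad \operatorname{tr}J(E^*)=\frac{u^*\bigl(-3bu^{*2}+2bu^*-1\bigr)}{1+bu^{*2}}-v^*.
\end{equation*}

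The counting claim (at least one, at most three coexisting equilibria when $p(u_M)\ge n(u_M)$ and $e^2a^2\ge 4bf^2$) is the sign analysis of $Q$ already done just before Table~\ref{Tab1}: from $Q(u)=a(1+bu^2)\bigl(p(u)-n(u)\bigr)$ one gets $Q(u_a)<0$ and $Q(u_M)\ge 0$, forcing a feasible root, and tracking $\operatorname{sign}Q$ at $u_a$, $u_M=1/\sqrt b$ and $1$ against the position of $u_b$ reproduces the nullcline configurations of Fig.~\ref{nullcline}. For stability, the key observation is that, since $p(u^*)=n(u^*)$ at every interior equilibrium, differentiating $Q$ gives $Q'(u^*)=a(1+bu^{*2})\bigl(p'(u^*)-n'(u^*)\bigr)$, so that
\begin{equation*}
\det J(E^*)=\frac{u^*v^*}{(1+bu^{*2})^2}\,Q'(u^*).
\end{equation*}
Since $Q$ has positive leading coefficient, $Q(0)=-(af+1)<0$, and (Descartes' rule) no negative roots, the feasible roots $0<u_1^*<u_2^*<u_3^*<1$ satisfy $Q'(u_1^*)>0$, $Q'(u_2^*)<0$, $Q'(u_3^*)>0$ (and $Q'(u_1^*)>0$, $Q'(u_2^*)<0$ in the two-equilibrium case). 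Hence $\det J(E_1^*)>0$, $\det J(E_2^*)<0$, $\det J(E_3^*)>0$. This proves (ii) immediately: $\det J(E_2^*)<0$ forces two real eigenvalues of opposite sign, so $E_2^*$ is a saddle whenever it exists.

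For $E_1^*$ and $E_3^*$ we have $\det J>0$, so asymptotic stability reduces to $\operatorname{tr}J<0$. In $\operatorname{tr}J(E^*)$ the term $-v^*$ is negative, while the first term equals $a_{10}=-u^*+\dfrac{2bu^{*2}(1-u^*)}{1+bu^{*2}}$, which is $\le 0$ exactly when $-3bu^{*2}+2bu^*-1\le 0$, i.e. for all $u^*$ if $b\le 3$ and for $u^*$ outside a subinterval of $(0,\tfrac{2}{3})$ if $b>3$. For part (iii) I would show, using $u_3^*>u_{sn_1}>u_2^*>1/\sqrt b$ and $u_3^*<1$, that $u_3^*$ lies to the right of that subinterval (equivalently $u_3^*\ge\tfrac{2}{3}$; in the residual range one instead bounds $a_{10}<v_3^*$ via $v_3^*=(1+bu_3^{*2})(1-u_3^*)/a$), so that $\operatorname{tr}J(E_3^*)<0$ on the whole window $f_{SN_2}<f<f_{TC}$; as a check, $u_3^*\to 1$, $v_3^*\to 0$ and $\operatorname{tr}J(E_3^*)\to -1$ as $f\to f_{TC}$, consistent with the transcritical exchange of stability with $E_1$. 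For part (i), $\det J(E_1^*)>0$ again reduces the matter to $\operatorname{sign}\bigl(\operatorname{tr}J(E_1^*)\bigr)$; one takes $b_H$ (as defined in the text) to be the value of $b$ at which $\operatorname{tr}J(E_1^*)=0$ with the remaining parameters fixed, and checks that $\operatorname{tr}J(E_1^*)$ is increasing in $b$ along the branch $b\mapsto u_1^*(b)$ — implicitly differentiating $Q(u_1^*;b)=0$ and using $Q'(u_1^*)>0$ — so that $\operatorname{tr}J(E_1^*)<0$ for $b<b_H$ and $>0$ for $b>b_H$.

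\textbf{Main obstacle.} Once the identity $\det J(E_j^*)=\tfrac{u_j^*v_j^*}{(1+bu_j^{*2})^2}Q'(u_j^*)$ is in hand, the determinant part is routine and (ii) is free. The real work is the trace: locating $u_3^*$ precisely relative to the roots of $-3bu^2+2bu-1$ (or getting the compensating bound $a_{10}<v_3^*$) so as to guarantee $\operatorname{tr}J(E_3^*)<0$ uniformly in the existence window, and showing $\operatorname{tr}J(E_1^*)$ changes sign exactly once in $b$. Both hinge on monotonicity estimates for the implicitly defined roots $u_j^*(b,f)$ of the quintic $Q$, and that is the delicate step.
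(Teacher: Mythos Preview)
Your determinant computation is correct and essentially equivalent to the paper's, but packaged differently: the paper writes $\det J(E^*)$ in terms of the nullcline slopes, $\det J(E^*)=u^*v^*\,\partial_v f_1\,\partial_v f_2\bigl(\tfrac{dv}{du}^{(f_2)}-\tfrac{dv}{du}^{(f_1)}\bigr)$, and then reads off the sign of $p'(u^*)-n'(u^*)$ geometrically from the inclination angles of the nullclines at each crossing (Fig.~\ref{nullcline}). Your identity $\det J(E^*)=\tfrac{u^*v^*}{(1+bu^{*2})^2}Q'(u^*)$ is the algebraic version of the same thing (since $Q(u)=a(1+bu^2)(p(u)-n(u))$), and your alternating-sign argument for $Q'$ at consecutive simple roots gives the same conclusions for $E_1^*$ and $E_2^*$ with less appeal to the picture. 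Part (ii) is indeed free either way.

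Where you diverge substantially is part (iii). The paper does \emph{not} attempt to control $\operatorname{tr}J(E_3^*)$ directly. Instead it argues by continuation: $E_3^*$ is born from $E_1$ through the transcritical bifurcation at $f=f_{TC}$, where it inherits the stability of $E_1$, and is declared asymptotically stable on its entire existence window $f_{SN_2}<f<f_{TC}$. This sidesteps exactly the obstacle you flag --- locating $u_3^*$ relative to the roots of $-3bu^2+2bu-1$ --- but, strictly speaking, it only gives local stability near $f_{TC}$ and tacitly assumes no Hopf bifurcation occurs along the $E_3^*$ branch. Your direct trace estimate, if it can be closed, would be more self-contained; the paper's shortcut is quicker but less complete.

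For part (i), the paper also does not prove that $\operatorname{tr}J(E_1^*)$ is monotone in $b$; it simply defines $b_H$ implicitly by $\operatorname{tr}J(E_1^*)=0$ and asserts the sign change. Your proposed implicit-differentiation argument is again aiming at something stronger than what the paper actually establishes.
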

\begin{proof}
The Jacobian $J(E^*),$ at a coexisting  equilibrium point $E^*(u^*, v^*)$,  is given by
 \begin{equation*}
     J(E^*)=\begin{bmatrix}
u \frac{\partial f_1}{\partial u}  & u \frac{\partial f_1}{\partial v}  \vspace{0.06in}\\ 
v \frac{\partial f_2}{\partial u}  & v \frac{\partial f_2}{\partial v}
\end{bmatrix}_{(u^*, v^*)}=\begin{bmatrix}
-u \frac{\partial f_1}{\partial v} \frac{dv}{du}^{(f_1)}  & u \frac{\partial f_1}{\partial v}  \vspace{0.06in}\\ 
-v \frac{\partial f_2}{\partial v} \frac{dv}{du}^{(f_2)} & v \frac{\partial f_2}{\partial v}
\end{bmatrix}_{(u^*, v^*)},
 \end{equation*}
 where $\frac{dv}{du}^{(f_j)}$  represents the  slope of the tangent to the curve $f_j(u,v)=0$ ($j=1,2$). Also,
 \begin{equation}
     \text{det} (J(E^*))=\left( uv \frac{\partial f_1}{\partial v} \frac{\partial f_2}{\partial v} \left(\frac{dv}{du}^{(f_2)} -\frac{dv}{du}^{(f_1)} \right)\right)_{(u^*, v^*)}.
     \label{det}
 \end{equation}
  We observe that
  \begin{subequations}
\begin{eqnarray}
   \frac{\partial f_1 (u^*, v^*)}{\partial v}=-\frac{a}{1+bu^2}<0\quad\mbox{and}\quad
   \frac{\partial f_2 (u^*, v^*)}{\partial v}=-1.\nonumber 
 \end{eqnarray}
\end{subequations}
Suppose that $\theta_1$ and $\theta_2$ respectively denote the  inclination angles of  the tangents to $f_1(u,v)=0$ and $f_2(u,v)=0$ at $E_2^*$ [see Fig. \ref{nullcline}(b)]. We find that $\frac{\pi}{2}<\theta_2<\theta_1<\pi$ holds whenever $E_2^*$ exists, which implies 
$$\left.\frac{dv}{du}^{(f_2)}\right|_{(u_2^*, v_2^*)}<\left.\frac{dv}{du}^{(f_1)}\right|_{(u_2^*, v_2^*)}.$$
Hence,  we have  $\text{det} (J(E_2^*))<0$ from (\ref{det}) and therefore $E_2^*$ is a saddle point. Similarly, we obtain $\text{det} (J(E_1^*))>0$ for $E_1^*$. Using Routh-Hurwitz stability criteria, the  coexisting equilibrium point $E_1^*$ is asymptotically stable if $\text{tr} (J(E_1^*))<0,$ which holds when $$\di{b<b_H:={\frac {u_1^*+v_1^*}{{u_1^*}^{2} \left( 2-3u_1^*-v_1^* \right) }}}.$$ 
Thus, $E_1^*$ is asymptotically stable for $b<b_H$ and unstable for $b>b_H.$

The coexisting equilibrium point $E_3^*$ exchanges stability with stable $E_1$  through a transcritical bifurcation at $f=f_{TC}$ (discussed in the next subsection) and it is feasible for $f<f_{TC}$. Therefore, it is asymptotically stable  whenever it exists.  
\end{proof}

\subsection{Local bifurcation Analysis}
Here, we discuss transcritical,  saddle-node, Hopf, generalized Hopf (GH), and Bautin bifurcations exhibited by the system \eqref{ode}.
\subsubsection{Transcritical bifurcation}
 \begin{prop}
  The temporal model (\ref{ode}) encounters a transcritical  bifurcation at $E_1$ when the parameter $f$ satisfies the threshold $f=f_{_{TC}}=\di{\frac {ea}{b+1}}$.
 \end{prop}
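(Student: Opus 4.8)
The plan is to invoke Sotomayor's theorem for transcritical bifurcation with $f$ as the bifurcation parameter. First I would record the two structural facts that make this work: $E_1=(1,0)$ is an equilibrium of \eqref{ode} for \emph{every} $f$ (so the trivial branch persists through $f=f_{TC}$), and from \eqref{jaco} the matrix $J(E_1)$ is upper triangular with eigenvalues $-1$ and $\frac{ea}{b+1}-f$, so at $f=f_{TC}$ it has a simple zero eigenvalue. I would then compute a right null vector $v=\bigl(-\tfrac{a}{b+1},\,1\bigr)^{T}$ of $J(E_1)$ and a left null vector $w=(0,1)^{T}$ of $J(E_1)^{T}$ at $f=f_{TC}$.

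Writing $F=(F_1,F_2)^{T}$ for the vector field in \eqref{ode}, the three quantities to check are (i) $w^{T}F_f(E_1,f_{TC})$, which must vanish; (ii) $w^{T}\!\bigl[DF_f(E_1,f_{TC})\,v\bigr]$, which must be nonzero; and (iii) $w^{T}\!\bigl[D^{2}F(E_1,f_{TC})(v,v)\bigr]$, which must be nonzero. Since $F_f=(0,-v)^{T}$ vanishes at $v=0$, (i) is immediate; since $DF_f=\mathrm{diag}(0,-1)$, one gets $w^{T}(DF_f\,v)=-1\neq0$, giving (ii). For (iii), because $w$ extracts only the second component, I need only the scalar $\partial^{2}_{uu}F_2\,v_1^{2}+2\partial^{2}_{uv}F_2\,v_1v_2+\partial^{2}_{vv}F_2\,v_2^{2}$ evaluated at $(1,0,f_{TC})$; using $\partial^{2}_{uu}F_2|_{(1,0)}=0$ (the cross term carries a factor $v$), $\partial^{2}_{uv}F_2|_{(1,0)}=\tfrac{ea(1-b)}{(1+b)^{2}}$ and $\partial^{2}_{vv}F_2=-2$, this collapses to $-\tfrac{2}{(1+b)^{3}}\bigl[(1+b)^{3}+ea^{2}(1-b)\bigr]$, which is strictly negative for $b\le1$ and nonzero for $b>1$ away from the codimension-two locus $(1+b)^{3}=ea^{2}(b-1)$. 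Sotomayor's theorem then delivers a transcritical bifurcation at $E_1$ for $f=f_{TC}$, matching the exchange of stability with $E_3^{*}$ recorded in Proposition \ref{prop2}.

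The computations are routine; the only point that truly needs attention is the non-degeneracy in (iii) — confirming the quadratic coefficient does not vanish — which is why one should either restrict to $b\le1$ or explicitly exclude the exceptional parameter combination (at which the bifurcation would degenerate to pitchfork type). If one prefers to avoid Sotomayor's theorem, the same conclusion follows from a one-dimensional centre-manifold reduction at $(1,0)$ along the $v$-direction: one would show the reduced equation takes the normal form $\dot\xi=\mu\,\xi-k\,\xi^{2}+\cdots$ with $\mu\propto(f_{TC}-f)$ and $k\neq0$, the coefficient $k$ reproducing exactly the non-degeneracy condition above.
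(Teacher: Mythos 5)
Your proof is correct, and it is worth comparing with the paper's own argument, which follows the same general strategy (transversality conditions at the simple zero eigenvalue of $J(E_1)$) but executes it differently. You work with the genuine null vectors: $J(E_1)$ at $f=f_{TC}$ is $\begin{pmatrix}-1 & -a/(b+1)\\ 0 & 0\end{pmatrix}$, whose kernel is spanned by $v=(-a/(b+1),1)^T$ and whose transpose has kernel spanned by $w=(0,1)^T$; with these, the standard Sotomayor condition $w^T[D\mathcal{F}_f\,v]=-1\neq 0$ holds and the classical transcritical theorem applies directly. The paper instead takes $\boldsymbol{\zeta}=(a/(b+1),1)^T$ and $\boldsymbol{\eta}=(1,0)^T$ — note that $(1,0)^T$ is not in the kernel of $J(E_1)^T$, and the sign of the first component of $\boldsymbol{\zeta}$ is flipped — obtains $\boldsymbol{\eta}^T D\mathcal{F}_f\boldsymbol{\zeta}=0$, and therefore invokes the \emph{degenerate} transcritical framework of \cite{degeneratetranscritical}, verifying instead that $\boldsymbol{\eta}^TD^2\mathcal{F}(\boldsymbol{\zeta},\boldsymbol{\zeta})=-4a^2/(b+1)^3\neq 0$. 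Your route is the more standard one and, with the correct left null vector, shows the bifurcation is in fact non-degenerate in the Crandall--Rabinowitz sense (since $D\mathcal{F}_f v=(0,-1)^T$ does not lie in the range of $J(E_1)$, which is spanned by $(1,0)^T$). A genuine added value of your version is the quadratic coefficient $-\tfrac{2}{(1+b)^3}\bigl[(1+b)^3+ea^2(1-b)\bigr]$: unlike the paper's always-nonzero quantity, this can vanish on the codimension-two locus $(1+b)^3=ea^2(b-1)$, and you are right that the statement should carry that exclusion (or the restriction $b\le 1$) for the bifurcation to be guaranteed transcritical rather than of higher degeneracy; the paper's computation does not surface this caveat.
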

 \begin{proof}
 The Jacobian matrix $J(E_1)$ given in (\ref{jaco}) has a  zero eigenvalue at $f=f_{_{TC}}$. Let the eigenvectors of the Jacobian matrix $J(E_1)$ and its transpose $J(E_1)^T$ corresponding to the zero eigenvalue  be $\boldsymbol{\zeta}=[\frac{ a}{b+1}, 1]^T$ and $\boldsymbol{\eta}=[1,0]^T$ respectively. Now, the transversality conditions \cite{Perko} become 
\begin{eqnarray*}
\boldsymbol{\eta}^T\mathcal{F}_f({E_{1}};f=f_{_{TC}})&=&0,\\
\boldsymbol{\eta}^TD\mathcal{F}_f({E_{1}};f=f_{_{TC}})\boldsymbol{\zeta}&=&0,\\
\boldsymbol{\eta}^TD^2\mathcal{F}({E_{1}};f=f_{_{TC}})(\boldsymbol{\zeta},\boldsymbol{\zeta})&=&-4\,{\frac {{a}^{2}}{ \left( b+1 \right) ^{3}}} \neq 0.  
\end{eqnarray*}

Here, $\mathcal{F}=[F_1(u,v),F_2(u,v)]^T$ and all the other notations are the same as in \cite{Perko}.
Thus, all the transversality conditions of degenerate transcritical  bifurcation \cite{degeneratetranscritical,dey2022bifurcation} are satisfied for the system (\ref{ode}) when $f=f_{_{TC}}$.
 \end{proof}

\subsubsection{Saddle-node bifurcation}
Suppose that for the polynomial $Q$ given in \eqref{eqQ}, there exist two  real roots, say  $u_{sn_1}$ and $u_{sn_2}$ of $Q'(u)=0$  with $0<u_{sn_1}<u_{sn_2}$. Then, opposite signs of  $Q(u_{sn_1})$ and $Q(u_{sn_2})$ lead to  three positive roots $u$ of the equation $Q(u)=0$  with $u_1^*<u_{sn_1}<u_2^*<u_{sn_2}<u_3^*$ [see Fig. \ref{Qu}(a)].   Note that for the feasibility of coexisting equilibria, we must have  $u_i^*<\text{min}\{1, u_b\}$ for $i=1,2,3$.  A variation in one temporal parameter may result in two cases:  either $E_2^*$ coincides with $E_1^*$ when $Q(u_{sn_1})=0$ [see Fig. \ref{Qu}(b))] or $E_2^*$ coincides with $E_3^*$ when  $Q(u_{sn_2})=0$ [see Fig. \ref{Qu}(c)] provided  $u_{sn_i}<\text{min}\{1, u_b\}$ for $i=1,2$. Thus,  a saddle-node bifurcation $SN_1$ occurs in the former case and another saddle-node bifurcation $SN_{2}$ occurs in the latter case.  Taking $f$ as the control parameter, the threshold value for $SN_1$ is given by 
$$ f_{_{SN_1}}= \frac{5b^2u_{sn_1}^4-4b^2u_{sn_1}^3+6bu_{sn_1}^2-4bu_{sn_1}+a^2e+1}{2ab}.$$ 
 To obtain the threshold value $f_{_{SN_{2}}}$, we replace  $u_{sn_1}$  with $u_{sn_2}$ in the  expression for $f_{_{SN_1}}.$  In the following proposition, transversality conditions are verified for this bifurcation.

 \begin{figure}[!ht]
\begin{subfigure}[b]{.32\textwidth}
 \centering
\includegraphics[scale=0.4]{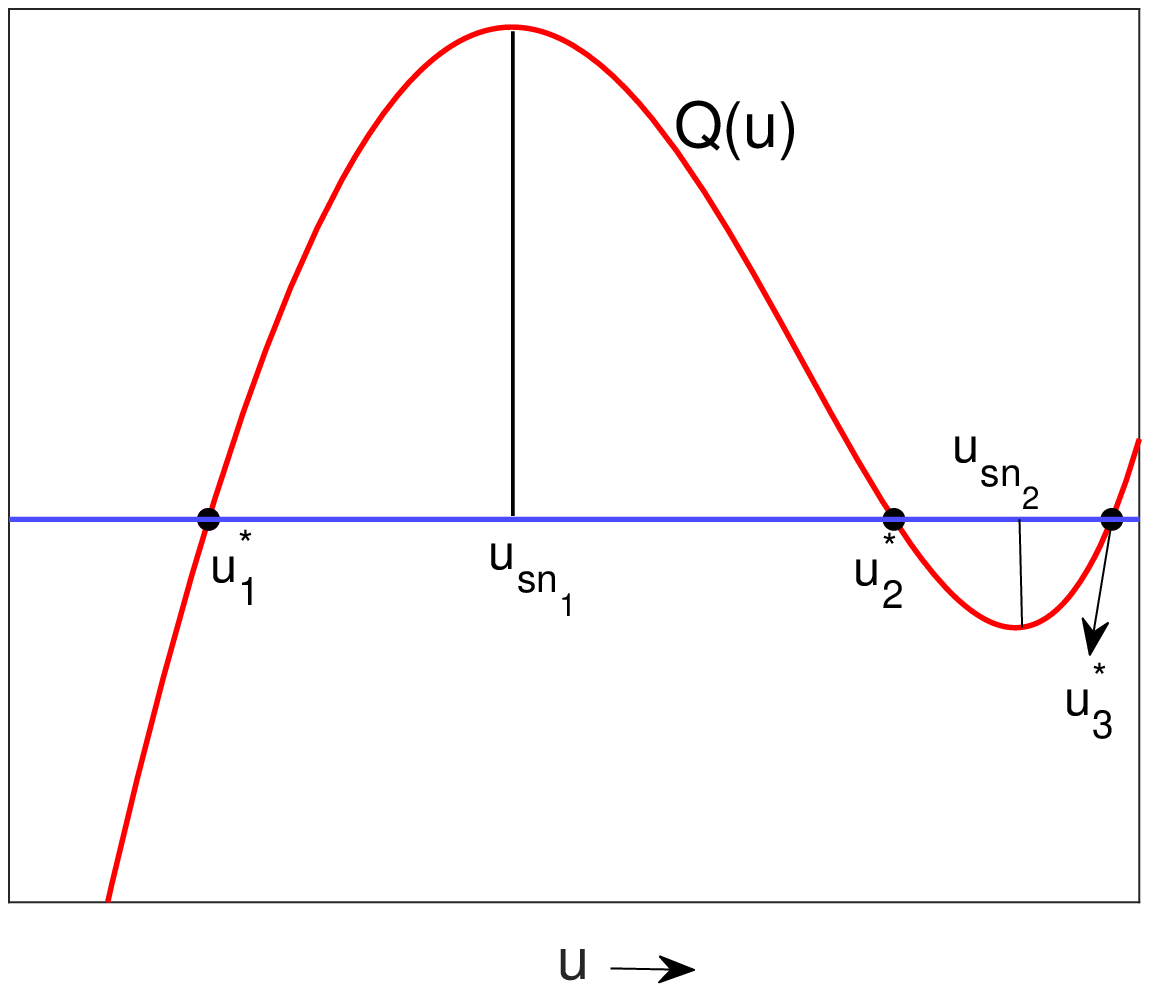}\\ 
 \caption{}
  \end{subfigure}
 \begin{subfigure}[b]{.32\textwidth}
 \centering
\includegraphics[scale=0.4]{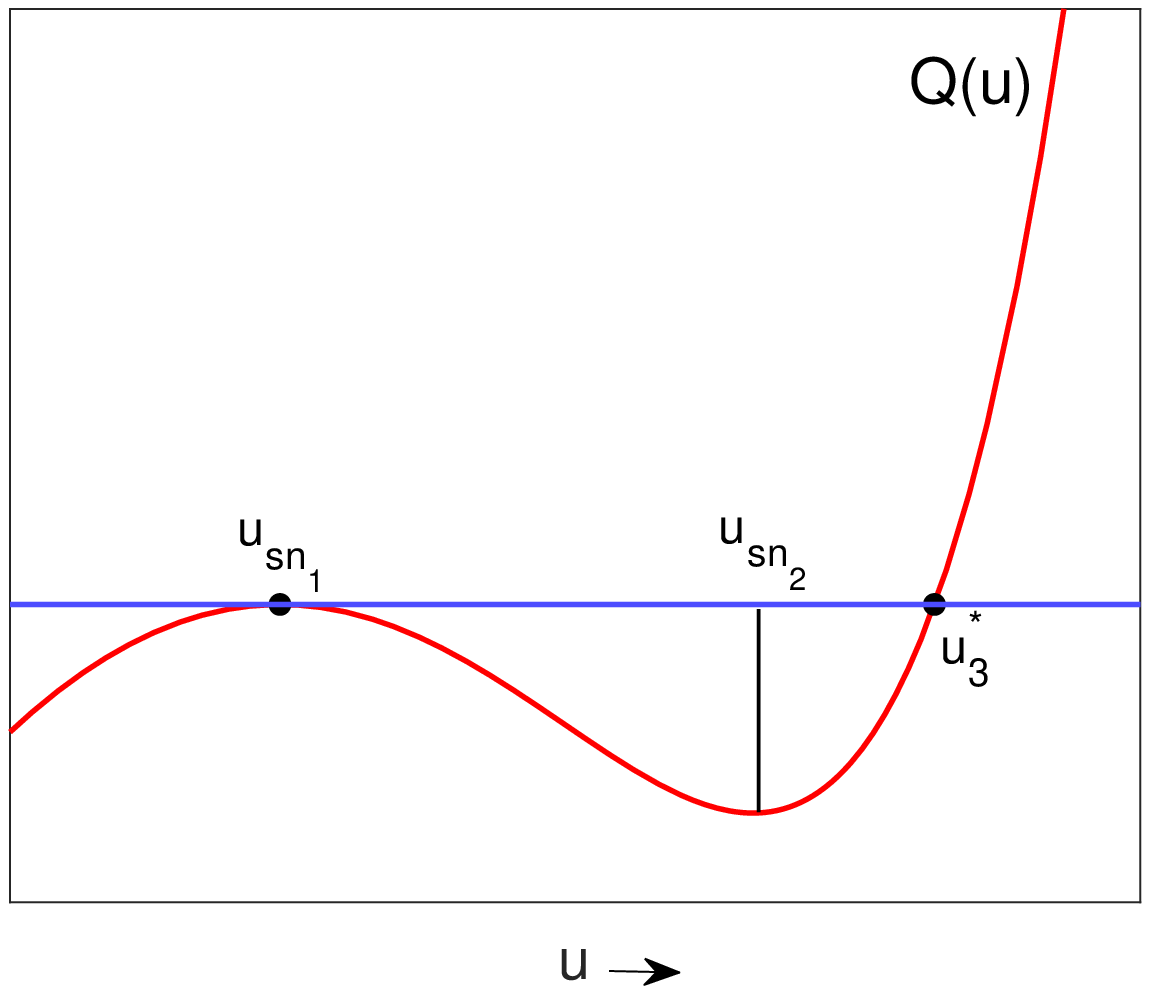}\\ 
 \caption{}
  \end{subfigure} 
  \begin{subfigure}[b]{.32\textwidth}
 \centering
\includegraphics[scale=0.4]{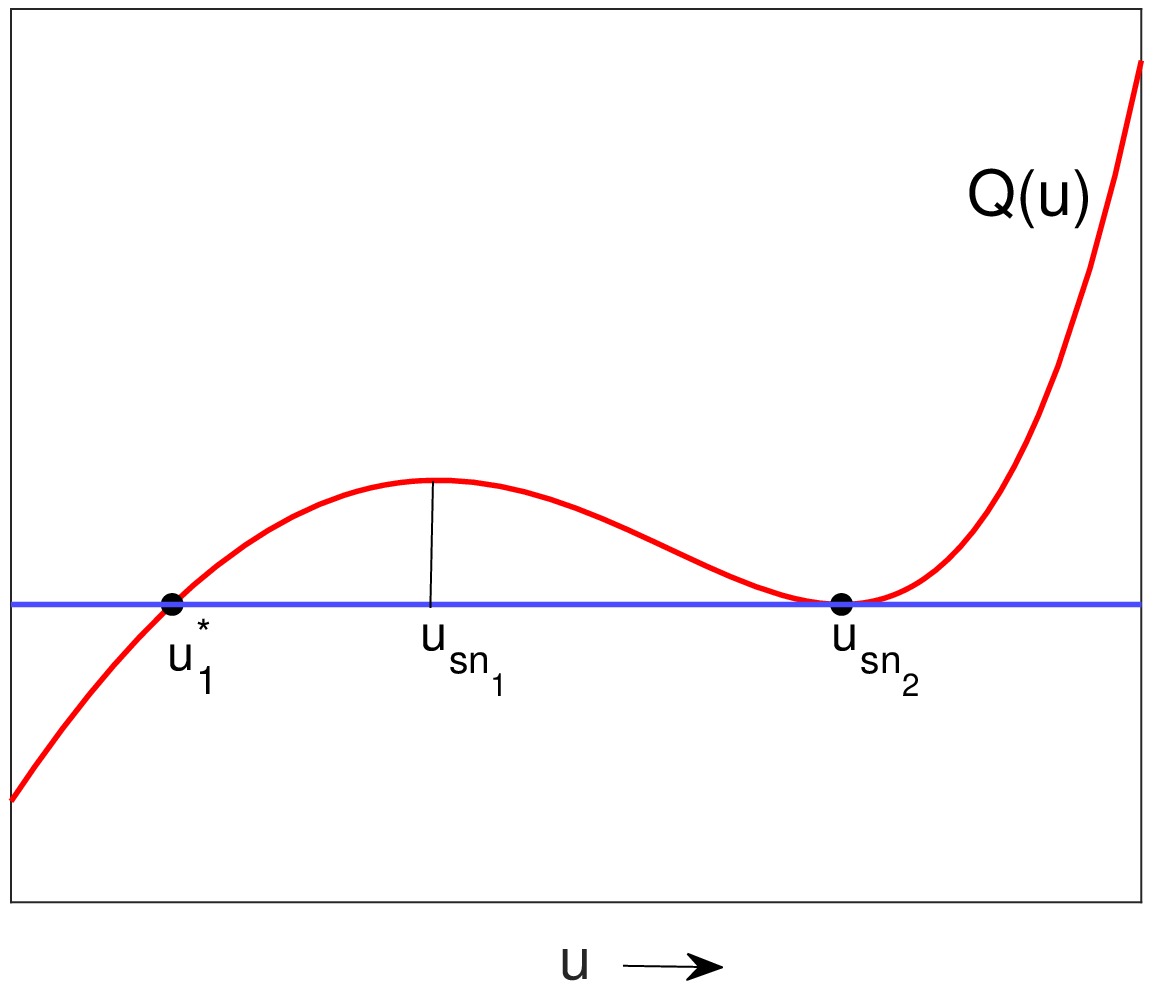}\\ 
 \caption{}
  \end{subfigure} 
%   \hspace{1.5em}
 \caption{Plot of $Q(u)$ for three different cases.}
\label{Qu}
\end{figure}
 
 \begin{prop} \label{SNprop}
 When $Q(u)=0$  has a double root $u<\text{min}\{1, u_b\}$,  then the temporal system (\ref{ode}) exhibits saddle-node bifurcation when the control parameter $f$ is varied.
 \end{prop}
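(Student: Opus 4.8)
The plan is to apply Sotomayor's theorem \cite{Perko} for a saddle-node bifurcation, using $f$ as the bifurcation parameter. Write the temporal system as $\dot{\mathbf{x}}=\mathcal{F}(\mathbf{x};f)$ with $\mathbf{x}=(u,v)^T$ and $\mathcal{F}=(F_1,F_2)^T$, and let $E^*=(u^*,v^*)$ denote the coexisting equilibrium whose prey component $u^*=u_{sn_i}$ ($i=1$ or $2$) is the assumed double root of $Q$, with $f=f_{SN_i}$ the associated threshold; the feasibility restriction $u^*<\min\{1,u_b\}$ guarantees $u^*,v^*>0$.

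\textbf{Step 1 (a simple zero eigenvalue).} First I would show that $J(E^*)$ is singular with a simple zero eigenvalue at $f=f_{SN_i}$. The algebraic link is the identity $Q(u)=a(1+bu^2)\bigl(p(u)-n(u)\bigr)$, obtained by clearing denominators in the nullcline equation $n(u)=p(u)$; differentiating and using $p(u^*)=n(u^*)$ gives $Q'(u^*)=a(1+bu^{*2})\bigl(p'(u^*)-n'(u^*)\bigr)$. On the other hand, since $\tfrac{dv}{du}^{(f_1)}\big|_{E^*}=n'(u^*)$ and $\tfrac{dv}{du}^{(f_2)}\big|_{E^*}=p'(u^*)$, the factored determinant \eqref{det} simplifies to $\det J(E^*)=\tfrac{a\,u^*v^*}{1+bu^{*2}}\bigl(p'(u^*)-n'(u^*)\bigr)$. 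Hence the double-root condition $Q(u^*)=Q'(u^*)=0$ (tangency of the two nullclines) is exactly equivalent to $\det J(E^*)=0$. At a saddle-node --- as opposed to a Bogdanov--Takens point --- one has $\operatorname{tr}J(E^*)=a_{10}+b_{01}\neq0$, so the zero eigenvalue is simple. One may take $V=(a_{01},-a_{10})^T$ as a right null vector of $J(E^*)$ and $W=(b_{01},-a_{01})^T$ as a left null vector; since $a_{01}=-au^*/(1+bu^{*2})\neq0$, both $V_1\neq0$ and $W_2\neq0$.

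\textbf{Step 2 (transversality conditions).} Since $f$ enters only through the term $-fv$ in $F_2$, one has $\mathcal{F}_f(E^*;f_{SN_i})=(0,-v^*)^T$, whence $W^T\mathcal{F}_f(E^*;f_{SN_i})=a_{01}v^*=-au^*v^*/(1+bu^{*2})\neq0$ because $u^*,v^*>0$; this settles the first condition. The second, $W^T\bigl[D^2\mathcal{F}(E^*;f_{SN_i})(V,V)\bigr]=W_1\,V^TH_{F_1}V+W_2\,V^TH_{F_2}V\neq0$, where $H_{F_k}$ is the Hessian of $F_k$ at $E^*$, is the substantive part, and I expect it to be the main obstacle: it amounts to evaluating an explicit rational expression in $a,b,e$ at $u^*=u_{sn_i}$ and verifying it does not vanish. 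I would handle it either by grinding out the two Hessians and simplifying with the relations $Q(u^*)=Q'(u^*)=0$, or --- more structurally --- by observing that $u^*$ being an \emph{exactly} double root of $Q$ (equivalently $Q''(u^*)\neq0$) forces the branch of coexisting equilibria to have a nondegenerate fold in the $(u,f)$-plane, which is precisely the content of the second Sotomayor condition. With Step 1 and both transversality conditions established, Sotomayor's theorem yields the saddle-node bifurcation at $f=f_{SN_i}$.
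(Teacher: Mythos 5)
Your proposal is correct and structurally the same as the paper's: both invoke Sotomayor's theorem, derive $\det J(E^*)=0$ from the tangency of the nontrivial nullclines at the double root, exhibit left/right null vectors, and compute the first transversality condition $W^T\mathcal{F}_f\neq 0$ to be (up to normalization) $\pm\,a u^* v^*/(1+b u^{*2})\neq 0$. The genuine difference lies in the second (quadratic) nondegeneracy condition. The paper writes the expression $\boldsymbol{\eta}^T D^2\mathcal{F}(\boldsymbol{\zeta},\boldsymbol{\zeta})$ symbolically and then checks it numerically; you propose instead to deduce it structurally from $Q''(u^*)\neq 0$: since $\partial Q/\partial f=-a(1+bu^2)\neq 0$, the equilibrium branch is locally $f=f(u)$ with $f'(u^*)=0$ and $f''(u^*)=-Q''(u^*)/Q_f(u^*)\neq 0$, i.e.\ a nondegenerate fold, which (given the simple zero eigenvalue and the first condition) is equivalent to the second Sotomayor condition. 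Your identity $Q(u)=a(1+bu^2)\bigl(p(u)-n(u)\bigr)$ is easily verified and makes the equivalence between the double-root condition and $\det J(E^*)=0$ cleaner than the paper's geometric slope argument. Two caveats: you leave the fold argument as a sketch rather than carrying it out (the paper leaves the same step to numerics, so neither is a complete analytic proof), and your assertion that $\operatorname{tr}J(E^*)\neq 0$ "at a saddle-node" is really an additional genericity hypothesis excluding the Bogdanov--Takens point $(f_{BT},b_{BT})$ — the paper makes the same tacit assumption. If you flesh out the fold computation, your route actually closes the gap the paper fills numerically.
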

 \begin{proof}
 
  Suppose that $u_{sn},$ with $u_{sn}<\text{min}\{1, u_b\}$, is a double root of $Q(u)=0$, i.e., $Q(u_{sn})=Q'(u_{sn})=0$ but $Q''(u_{sn})\neq 0$ when $f=f_{_{SN}}$. Let the corresponding interior equilibrium point be $E_{SN}^*=(u_{sn},v_{sn}).$ Therefore,  the nontrivial nullclines $f_1(u,v)=0$ and $f_2(u,v)=0$ touch each other at $E_{SN}^*,$ where both  of them have  same slope  $\frac{dv^{(f_1)}}{du}|_{E_{SN}^*}=\frac{dv^{(f_2)}}{du}|_{E_{SN}^*}.$ Using $\di{\frac{dv^{(g)}}{du}=-\frac{\frac{\partial g}{\partial u}}{\frac{\partial g}{\partial v}}},$ we find
  $$\text{det}(J({E_{_{SN}}^*}))= \left[\ 
 uv\left(\frac{\partial f_1}{\partial u}\frac{\partial f_2}{\partial v}-\frac{\partial f_1}{\partial v}\frac{\partial f_2}{\partial u} \right)
 \right]_{E_{SN}^*}=0.$$ Therefore,  the Jacobian matrix  $J(E_{SN}^*)$ has a  zero  eigenvalue.  Let $\di{\boldsymbol{\zeta}=[p, 1]^T}$ and $\di{\boldsymbol{\eta}=[1,q]^T}$ respectively be the  eigenvectors of $J({E_{SN}^*})$ and $[J({E_{SN}^*})]^T$ corresponding to the zero eigenvalue, where 
 $$
 \di{p=-{\frac {a}{1+ \left( 3\,{u_{sn}}^{2}-2\,u_{sn} \right) b}}\;\text{ and }\;q=-{\frac {au}{ \left( b{u}^{2}+1 \right) v}}}.
 $$
 To check the transversality conditions, we calculate
\begin{subequations}
\begin{equation*}
\boldsymbol{\eta}^T\mathcal{F}_f({E_{SN}^*};f=f_{_{SN}})={\frac {au_{sn}}{b{u_{sn}}^{2}+1}},
\end{equation*}
\begin{equation*} \label{sntc}
%\begin{split}
   \boldsymbol{\eta}^TD^2\mathcal{F}({E_{SN}^*};f=f_{_{SN}})(\boldsymbol{\zeta},\boldsymbol{\zeta})=\Big(
\frac{\partial^2 F_1}{\partial u^2}p^2+2\frac{\partial^2 F_1}{\partial u \partial v}p +\frac{\partial^2 F_1}{\partial v^2}+q (\frac{\partial^2 F_2}{\partial u^2}p^2+2\frac{\partial^2 F_2}{\partial u \partial v}p +\frac{\partial^2 F_2}{\partial v^2})\Big)_{({E^*_{SN}};f_{_{SN}})},
%\end{split}
\end{equation*}
\end{subequations}
where we omit explicit expressions in the  last equation since it is too cumbersome. The system (\ref{ode}) undergoes a non-degenerate  saddle-node bifurcation \cite{Perko}  at $f=f_{_{SN}}$ if $\boldsymbol{\eta}^T\mathcal{F}_f({E_{SN}^*};f=f_{_{SN}})\ne 0$ and 
$\boldsymbol{\eta}^TD^2\mathcal{F}({E_{SN}^*};f=f_{_{SN}})(\boldsymbol{\zeta},\boldsymbol{\zeta})\neq0$. Clearly, the first conditions is satisfied and the last condition is verified numerically.
\end{proof}
 \subsubsection{Cusp bifurcation}
  We have found  that  a transcritical bifurcation $TC$ occurs at $E_1$ and a saddle-node bifurcation  $SN_{2}$ occurs at $E_{SN_{2}}^*=(u_{sn_2},v_{sn_2})$ with variation of the control parameter $f$. Now, $E_{SN_{2}}^*$ and $E_1$ coincide with the variation of a different temporal parameter $b$. Thus,  the saddle-node bifurcation curve $SN_{2}$ and transcritical curve $TC$  intersect at a cusp bifurcation point ($f_{_{CP}},b_{_{CP}}$) in the $f$-$b$ parametric plane.
\begin{prop}
 The system (\ref{ode})  encounters  a cusp bifurcation when  $Q(1)=Q'(1)=0$.
 \end{prop}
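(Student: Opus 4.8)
The plan is to take $E_1=(1,0)$ as the organizing centre and reduce \eqref{ode} near it to the cusp normal form, reading the conditions $Q(1)=Q'(1)=0$ (together with the non-degeneracy $Q''(1)\neq0$) off the reduced scalar equation. First I would observe that $Q(1)=a\big(ae-f(b+1)\big)$, so $Q(1)=0$ is exactly the transcritical threshold $f=f_{_{TC}}=ea/(b+1)$; on this curve $J(E_1)$ in \eqref{jaco} has a simple zero eigenvalue (the other one being $-1<0$), with right and left null eigenvectors $\boldsymbol\zeta$ and $\boldsymbol\eta$ as in the transcritical proposition. Since $F_1(1,0)=F_2(1,0)=0$ for all parameter values, $E_1$ is an equilibrium throughout the $(f,b)$-plane, so the centre-manifold reduction at $E_1$ yields a scalar equation $\dot w=g(w;f,b)$ with $g(0;f,b)\equiv0$, i.e. $g(w)=\lambda(f,b)\,w+a_2(f,b)\,w^2+a_3(f,b)\,w^3+O(w^4)$, where $\lambda(f,b)=ea/(b+1)-f$ and $a_2$ is built from the second-order terms of \eqref{ode} at $E_1$ projected along $\boldsymbol\eta$ (after normalising $\boldsymbol\eta^{T}\boldsymbol\zeta=1$).

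Next I would evaluate $a_2$ on the curve $\lambda=0$: after simplifying with $f=f_{_{TC}}$, one finds that $a_2=0$ is equivalent to $(b+1)^3=a^2e\,(b-1)$, which --- again using $Q(1)=0$ --- is precisely $Q'(1)=0$. Thus the two scalar equations $Q(1)=0$ and $Q'(1)=0$ are exactly $\lambda=0$ and $a_2=0$; their common solution defines the point $(f_{_{CP}},b_{_{CP}})$, at which $g(w)=a_3\,w^3+O(w^4)$ --- the organizing centre of a cusp. It then remains to verify $a_3\neq0$: obtaining $a_3$ requires the quadratic approximation of the centre manifold, and a (lengthy) computation --- equivalently, an inspection of the quintic $Q$ near its double root --- gives $a_3\neq0$ whenever $Q''(1)\neq0$, which can in any case be verified numerically at the parameter values of interest.

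Finally I would check the unfolding condition, namely that the map $\Phi:(f,b)\mapsto\big(\lambda(f,b),\,a_2(f,b)\big)$ is a local diffeomorphism at $(f_{_{CP}},b_{_{CP}})$. Since $\partial\lambda/\partial f=-1$, the $2\times2$ Jacobian of $\Phi$ is non-singular precisely when the restriction of $a_2$ to the curve $\{\lambda=0\}$ has non-zero $b$-derivative at $b_{_{CP}}$; that restriction vanishes exactly on $(b+1)^3=a^2e(b-1)$, and its $b$-derivative there equals a non-zero multiple of $(2-b_{_{CP}})/(b_{_{CP}}+1)^4$, hence is non-zero unless $b_{_{CP}}=2$ (a non-generic case, excluded by the parameter values at hand). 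With $\lambda=a_2=0$, $a_3\neq0$, and $\Phi$ regular, the standard reduction to the cusp normal form $\dot w=\mu_1+\mu_2 w+s\,w^3+O(w^4)$, $s=\operatorname{sign}(a_3)$, shows that \eqref{ode} undergoes a non-degenerate cusp bifurcation at $(f_{_{CP}},b_{_{CP}})$, i.e. whenever $Q(1)=Q'(1)=0$.

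The hardest part will be the cubic-order bookkeeping for $a_3$, which drags one through the quadratic correction of the centre manifold and the algebra needed to match $a_3\neq0$ with $Q''(1)\neq0$. A cleaner alternative that bypasses $a_3$ entirely is to argue at the level of equilibrium sets: the coexisting equilibria near $(1,0)$ have $u$-coordinates equal to the roots of $Q(u;f,b)$ near $u=1$ (with $v=p(u)$), so the hypothesis that $u=1$ is a double root of $Q$ with $Q''(1)\neq0$ identifies the local branch of equilibria with the zero set $\{\mu_1+\mu_2 w+s\,w^3=0\}$ of the cusp normal form; tracking the type of each branch --- saddle versus node, from $\det J$ and $\operatorname{tr} J$ as in Proposition \ref{prop2} --- then establishes the cusp without any closed-form evaluation of $a_3$.
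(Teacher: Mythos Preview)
Your approach is far more detailed than the paper's. The paper's argument is a brief geometric identification: the cusp point is declared to be where the saddle-node curve $SN_2$ and the transcritical curve $TC$ meet in the $(f,b)$-plane; since $SN_2$ forces $Q$ to have a double root and $TC$ forces an interior equilibrium to pass through $E_1$ (i.e.\ $u=1$), the double root must sit at $u=1$, giving $Q(1)=Q'(1)=0$. The paper then simply solves these two equations for $(f_{_{CP}},b_{_{CP}})$; no centre-manifold reduction, no computation of $a_2$ or $a_3$, and no verification of non-degeneracy or unfolding conditions appear.

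Your identifications $Q(1)=0\Leftrightarrow f=f_{_{TC}}$ and, on that curve, $Q'(1)=0\Leftrightarrow(b+1)^3=a^2e(b-1)\Leftrightarrow a_2=0$ are correct and sharper than anything in the paper. There is, however, a genuine gap in the final step of your main argument. Because $E_1=(1,0)$ is an equilibrium for \emph{all} $(f,b)$, the reduced scalar field satisfies $g(0;f,b)\equiv0$, so the constant term of $g$ is identically zero. If you shift $w\mapsto w-a_2/(3a_3)$ to kill the quadratic term and write $g$ as $\tilde\mu_1+\tilde\mu_2 w+a_3 w^3$, then $\tilde\mu_1=-\lambda a_2/(3a_3)+2a_2^3/(27a_3^2)$ and $\tilde\mu_2=\lambda-a_2^2/(3a_3)$; the Jacobian of $(\tilde\mu_1,\tilde\mu_2)$ with respect to $(\lambda,a_2)$ at the origin has rank one. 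Hence regularity of $\Phi:(f,b)\mapsto(\lambda,a_2)$ does \emph{not} give a versal two-parameter unfolding of the cusp $\dot w=\mu_1+\mu_2 w\pm w^3$: the fold set in $(\lambda,a_2)$-space is the union of the transcritical line $\{\lambda=0\}$ and the tangent parabola $\{a_2^2=4a_3\lambda\}$, not the cuspidal cubic $4\mu_2^3+27\mu_1^2=0$. The paper's informal use of ``cusp bifurcation'' for this $TC$--$SN_2$ tangency likely invited the misstep.

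Your closing alternative --- arguing directly from the double root of $Q$ at $u=1$ together with the stability types of the merging branches --- is essentially the paper's route and avoids the normal-form trap entirely; that is the version to keep.
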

  \begin{proof}
   Suppose at $f=f_{_{CP}}$ and $b=b_{_{CP}}$,  interior  equilibria $E_2^*$ and $E_3^*$ coincide with the axial equilibrium point  $E_1$. Here, the curves $TC$ and $SN_2,$ corresponding to the transcritical and saddle-node bifurcations respectively, meet  at the cusp bifurcation point $(f_{_{CP}},b_{_{CP}})$ in the $f$-$b$ plane. From proposition \ref{SNprop}, we know that $Q(u)$ has a double root. Since transcritical bifurcation also happens simultaneously,  $Q(u)$ has a double root $1$, i.e.,  $Q(1)=Q'(1)=0$ at the cusp bifurcation threshold. Here,  $f_{_{CP}}$ is the positive root of the equation 
   $2z^3-aez^2+ae^2=0$ and the corresponding $\di{b_{_{CP}}=({ae-f_{_{CP}}})/{f_{_{CP}}}}.$ %\textcolor{magenta}{positive root is unique?}\textcolor{green}{No sir. We can't say.}
  \end{proof}
 \subsubsection{Hopf and Bautin bifurcation}
 At a Hopf-bifurcation threshold, a stable equilibrium point changes stability and a limit cycle is generated that can be stable or unstable. In proposition \ref{prop2}{\it (i)},  we have observed that the  interior equilibrium $E_1^*$ changes its stability when the trace of  $J(E_1^*)$ changes its sign  due to variation in $b$.  We thus have $tr(J(E_1^*))=0$ at the Hopf bifurcation threshold $b=b_H.$ Since $(u_1^*,v_1^*)$ depends on $b$, the threshold $b_H$ is an implicit expression. The temporal system (\ref{ode}) exhibits a Hopf bifurcation  at  $b=b_{_{H}}$ if the  non-hyperbolicity and transversality conditions given below are  satisfied: 
\begin{eqnarray*}
&H1:&  \; \;  \text{det}[J(E_1 ^*;b=b_{_H})]>0,\\   
&H2:& \; \; \frac{d}{d b} (tr[J(E_1 ^*)] ) |_{b=b_{_H}} \neq 0.
\end{eqnarray*} 
Hopf-bifurcations are classified as supercritical or subcritical depending on the stability of the generated limit cycle. Supercritical Hopf-bifurcation occurs when the limit cycle is stable, whereas subcritical Hopf-bifurcation occurs when the limit cycle is unstable. The first case corresponds to the first Lyapunov coefficient $l_1<0,$ whereas the second one corresponds to $l_1>0$ \cite{Perko}. Due to the unavailability of explicit expression of interior equilibrium $E_1^*$, it is difficult to determine the sign of $l_1$ analytically. However, we obtain the value of $l_1$  numerically using the Matcont software. If we fix the temporal parameters $a=7$, $f=1.05$ and  $e=0.95,$  then the system (\ref{ode}) exhibits a subcritical Hopf bifurcation around $E_1^*=(0.2412,0.1455)$ at $b_{_{H}} =5.8759$ with $l_1=0.0085.$  For the same parameter values except $f=1.06$, a supercritical Hopf bifurcation has been  found around $E_1^*=(0.2443,0.1455)$ at $b_{H} =5.8234$ with  $l_1=-0.043$. 

Clearly, the first Lyapunov coefficient $l_1$ vanishes between $f=1.05$ and $f=1.06$ with $a=7$ and $e=0.95.$ When  $l_1$ becomes zero, the system (\ref{ode}) undergoes a codimension-2 bifurcation known as a Bautin bifurcation or  generalized Hopf bifurcation (GH). A global saddle-node bifurcation curve of the limit cycle, where a stable limit cycle collides with an unstable limit cycle, emerges  from the GH point in the two-parametric plane of bifurcation. The system (\ref{ode}) undergoes  Bautin bifurcation  at $E_1^*=(0.2417,0.1455)$ for the bifurcating parameter values $f_{_{GH}}=1.0517 $ and  $b_{_{GH}}=5.8671$ with $a=7$ and $e=0.95.$
 \subsubsection{Bogdanov-Takens bifurcation}
 A Bogdanov-Takens (BT) bifurcation is a codimension-2 bifurcation that occurs in a dynamical system when a Hopf bifurcation colloids with a saddle-node bifurcation.  The system (\ref{ode}) exhibits a Hopf bifurcation and two saddle-node bifurcations which suggests that a BT bifurcation may occur in our system. Both the  determinant and trace of the Jacobian matrix, evaluated at an equilibrium point,  vanish simultaneously at a BT bifurcation point. %In other words, the fixed point must be a degenerate critical point with two zero eigenvalues and non-zero eigenvectors.\textcolor{magenta}{meaning of the previous sentence?} The eigenvectors corresponding to the zero eigenvalue determine the nature of the bifurcation (e.g. direction of the appearance or disappearance of the limit cycle).
 A homoclinic or heteroclinic global bifurcation curve comes out from the BT point in the two-parametric bifurcation plane. Here, we take $f$ and $b$ as  control parameters for BT bifurcation. For fixed  parameter values $a=7$ and $e=0.95,$ the system (\ref{ode}) exhibits a BT bifurcation for $E_1^*(0.4086,0.1637)$ at $(f_{_{BT}},b_{_{BT}})=(1.2388,5.6146).$
 \begin{figure}[!t]
\centering
\includegraphics[scale=0.75]{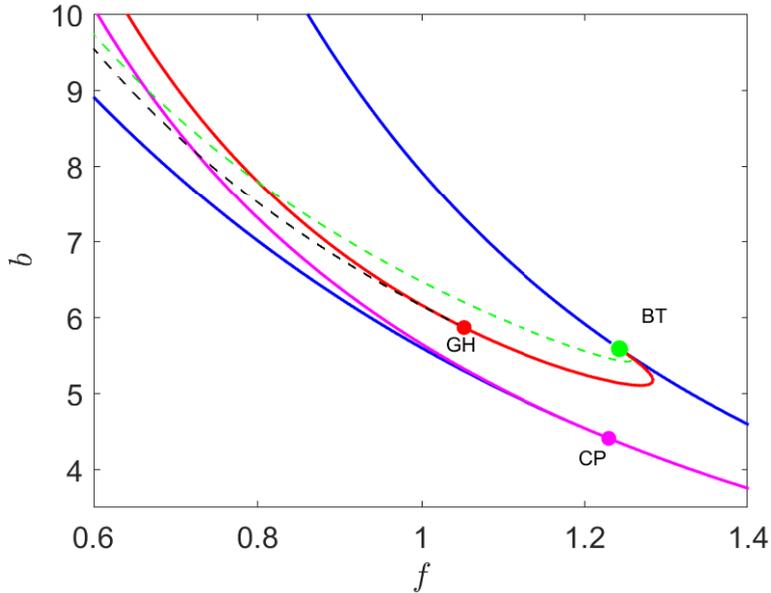}\\
\caption{Two parametric bifurcation diagram in the $f$-$b$ plane. Magenta, blue, and  red colour curves represent the transcritical, saddle-node, and  Hopf bifurcations respectively. Here, the upper and lower blue curves represent the SN$_1$ and SN$_2$, respectively.  The green and black dashed curves denote the global curves, namely  homoclinic and saddle-node bifurcation of limit  cycles, respectively. Further, colored solid dots represent codimension-2 bifurcation points. Other parameter values are $a=7$ and $e=0.95$. }
\label{2dbif}
\end{figure}
\subsection{Numerical visualisation}
Here, we visualize previously described local and global bifurcations with the help of numerical simulations. For fix parameter values $a=7,$ $e=0.95$, we plot a two-dimensional bifurcation diagram in the  $f$-$b$ parametric plane (see Fig. \ref{2dbif}). The coordinate of the cusp point (CP) is (1.2270,4.4195) and the coordinates of GH and BT points have already been mentioned earlier. 
 
\begin{figure}[!t]
\begin{subfigure}[b]{.48\textwidth}
 \centering
\includegraphics[scale=0.55]{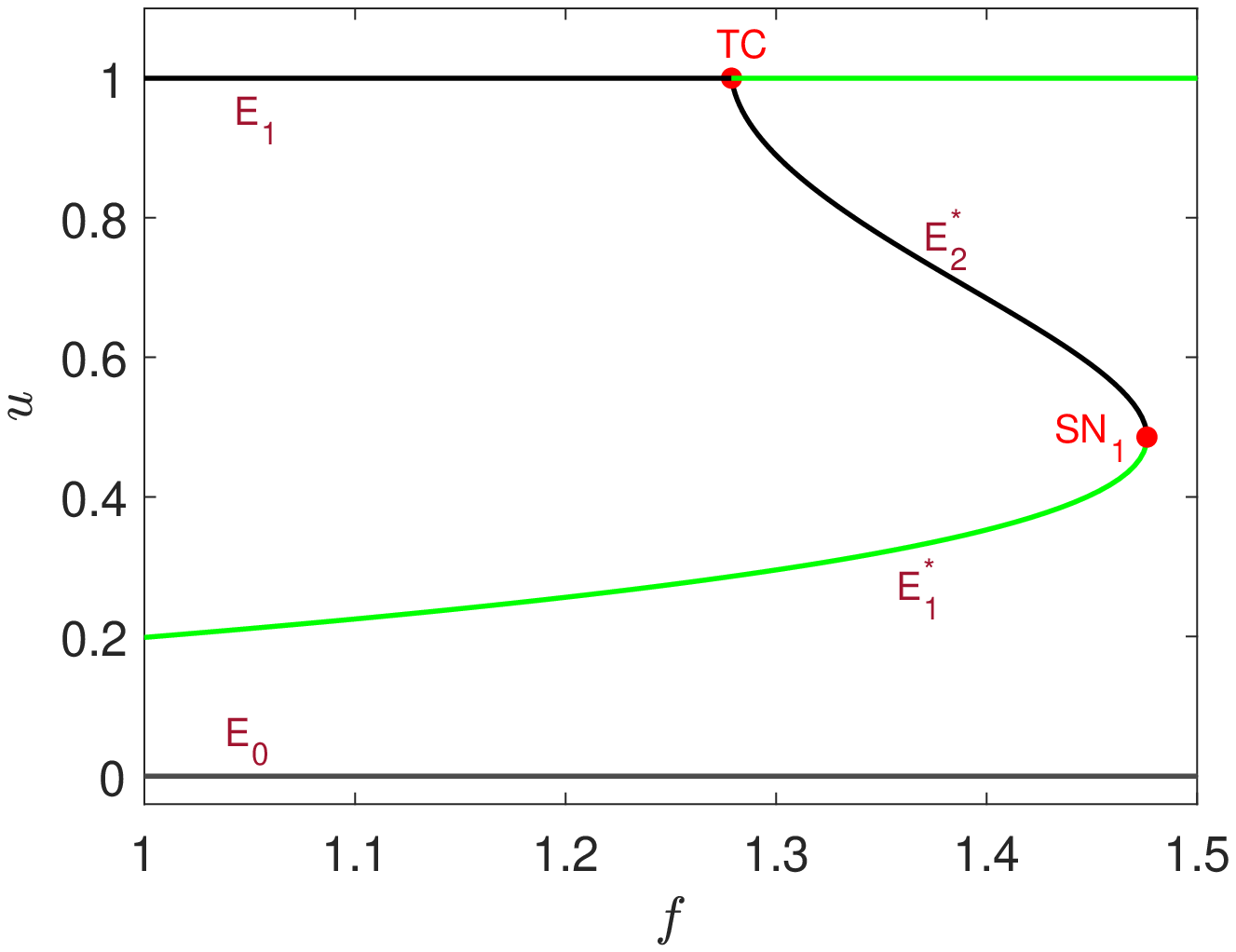}\\ 
 \caption{}
  \end{subfigure}
 \begin{subfigure}[b]{.48\textwidth}
 \centering
\includegraphics[scale=0.55]{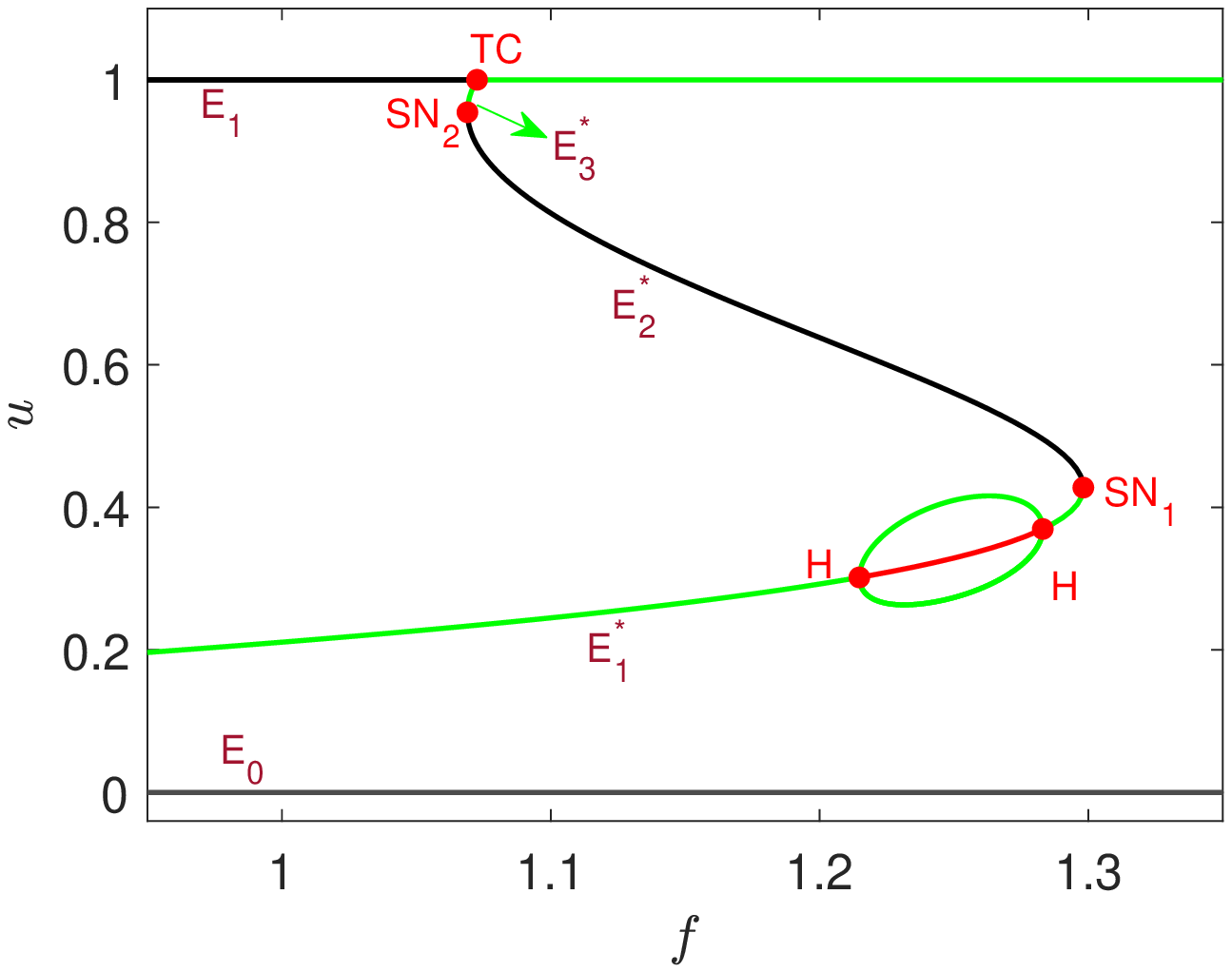}\\ 
 \caption{}
  \end{subfigure}
  \begin{subfigure}[b]{.48\textwidth}
 \centering
\includegraphics[scale=0.55]{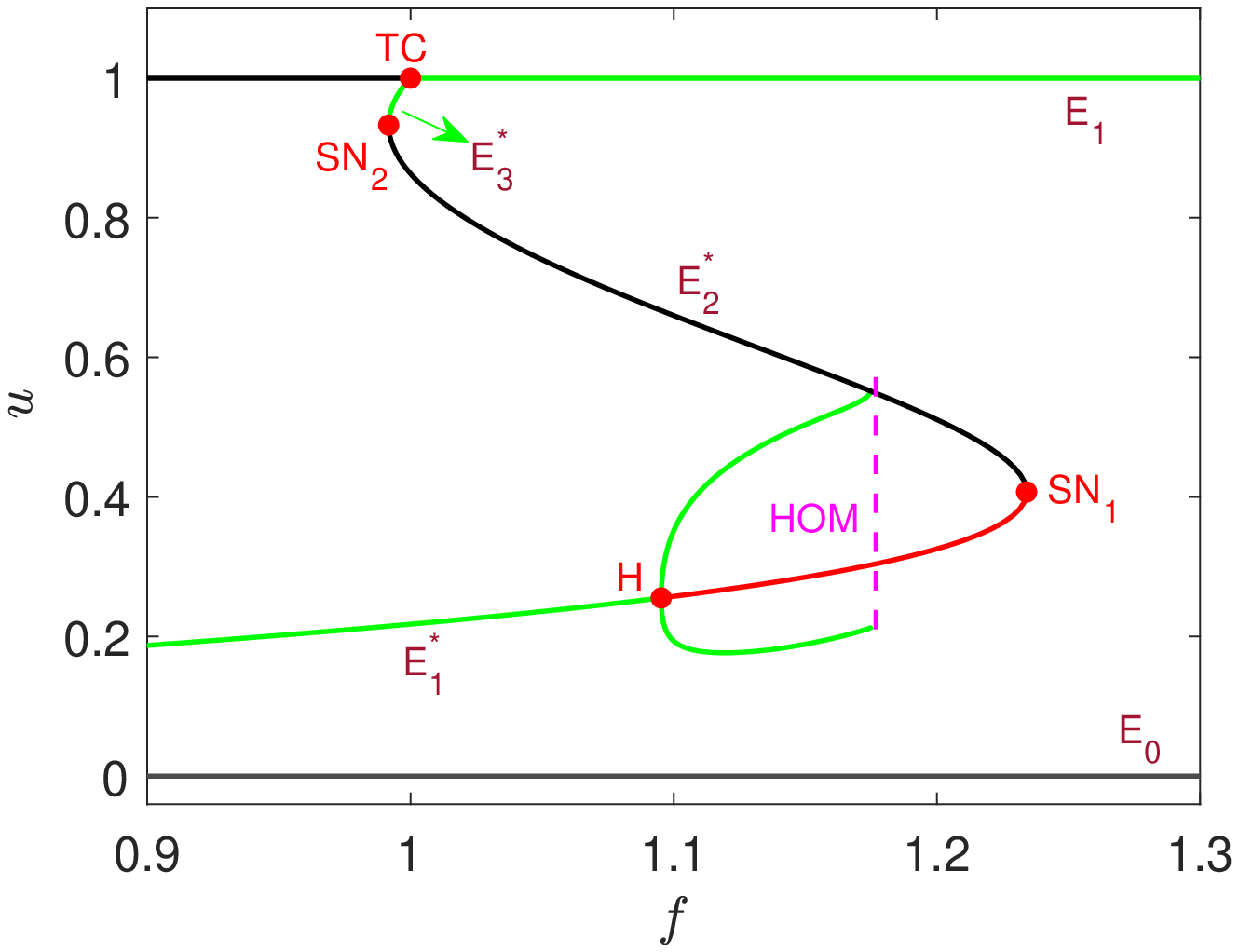}\\ 
 \caption{}
  \end{subfigure}\hspace{1em}
  \begin{subfigure}[b]{.48\textwidth}
 \centering
\includegraphics[scale=0.55]{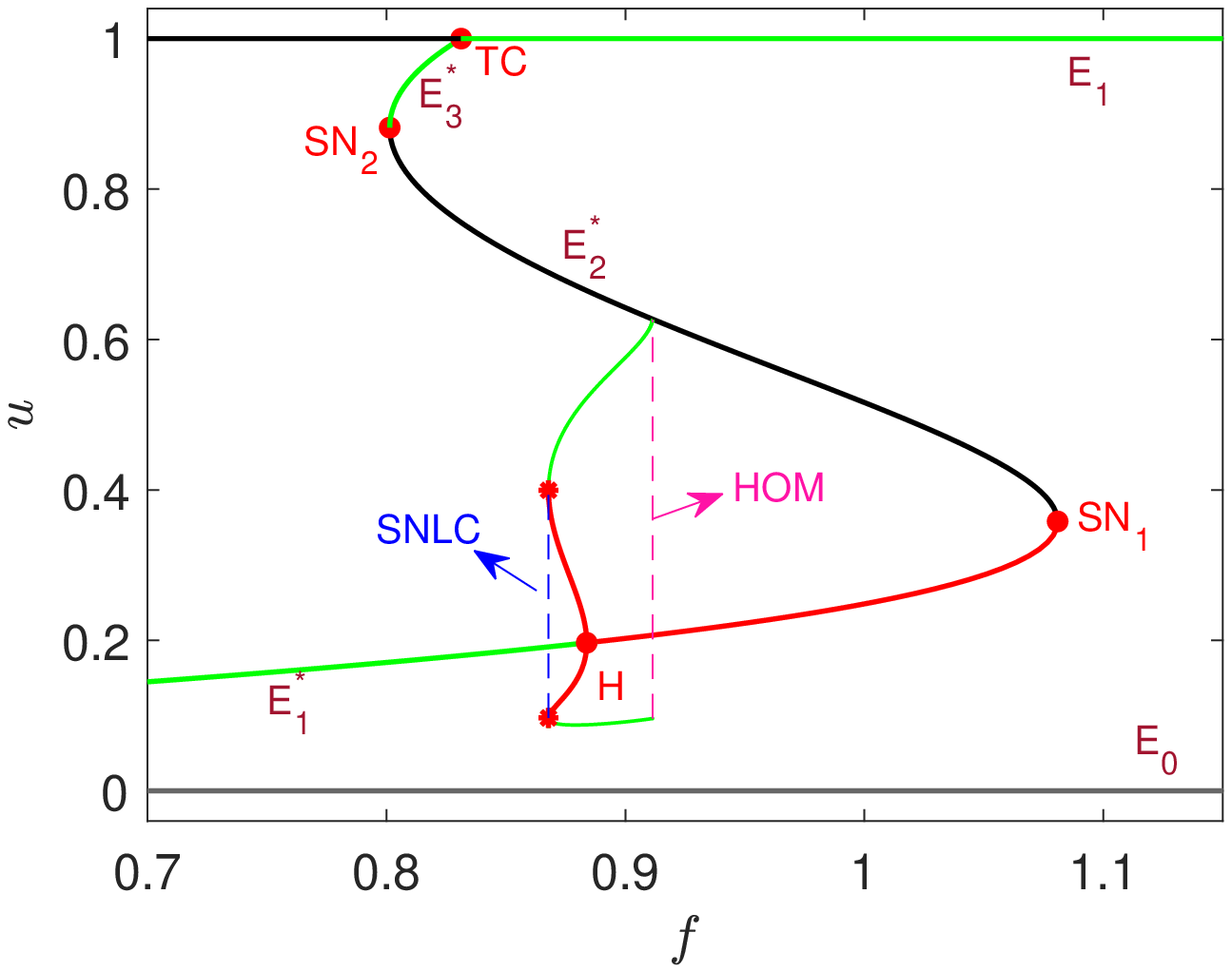}\\ 
 \caption{}
  \end{subfigure} 
%   \hspace{1.5em}
 \caption{Bifurcation diagram  of the system \eqref{ode} against parameter $f$ for different values of $b$: (a) $b=4.2$, (b) $b=5.2$, (c) $b=5.65$, and (d) $b=7$. Here SN$_1$ and SN$_2$ represent saddle-node bifurcations; TC and H denote transcritical and Hopf bifurcations respectively. Also, SNLC and HOM represent the saddle-node bifurcation of limit cycles and homoclinic bifurcation respectively. Other parameter values are $a=7$  and $e=0.95$.} 
\label{1dbif}
\end{figure}

To better understand how the system dynamics change across the local and global bifurcation curves in  Fig.~\ref{2dbif}, we consider four different values of $b$ and plot their corresponding one-parametric bifurcation diagrams in Fig.~\ref{1dbif}. For $b=4.2,$ equilibria $E_1^*$ and $E_2^*$ appear in the system through the saddle-node bifurcation  SN$_1$, and $E_2^*$ disappears from feasibility region through a transcritical bifurcation TC [see Fig. \ref{1dbif}(a)]. For $b=5.2,$ two qualitative changes are observed [see Fig. \ref{1dbif}(b))]. First, the system \eqref{ode} exhibits another saddle-node bifurcation SN$_2.$  Second, two supercritical Hopf bifurcations occur around $E_1^*$ which leads to an oscillatory coexisting solution between two Hopf bifurcation thresholds. Note that these two Hopf bifurcation points lie on the same Hopf curve (marked by red color) below the BT point in Fig. \ref{2dbif}. The system \eqref{ode} shows a bistable dynamics in between $E_1^*$ and $E_3^*$  for parameter value in $f_{SN_2}<f<f_{TC}.$

Figures \ref{1dbif}(a) and (b) correspond to parameter value $b<b_{BT}.$ Now, we consider the case $b>b_{BT}$ for which two bifurcation diagrams are shown in Figs. \ref{1dbif}(c) and (d).  Here, the coexisting equilibrium point $E_1^*,$ generated due to saddle-node bifurcation SN$_1,$ becomes unstable compared to the stable case for $b<b_{BT}$. The Hopf bifurcating limit cycle disappears due to  collision with the coexisting equilibria $E_2^*$ through a homoclinic bifurcation for $b_{BT}<b<b_{GH}$ [see Fig. \ref{1dbif}(c)]. For $b>b_{GH},$ the Hopf bifurcation becomes subcritical and the corresponding bifurcation diagram is shown in Fig. \ref{1dbif}(d). A stable and an  unstable limit cycles are generated due to a global homoclinic bifurcation (HOM) and a subcritical Hopf bifurcation respectively.  These two limit cycles collide and disappear from the system dynamics through the saddle-node bifurcation of limit cycles (SNLC). Interestingly, the system shows tristability among three attractors, specifically, two equilibria $E_1^*,$ $E_1$ and the stable limit cycle around $E_1^*$ for parameter value $f_{SNLC}<f<f_H.$ The unstable limit cycle around $E_1^*$ and the stable manifold of the $E_2^*$ act as separatrix of these three attractors.
\begin{figure}[!t]
\begin{subfigure}[b]{.48\textwidth}
 \centering
 \includegraphics[scale=0.5]{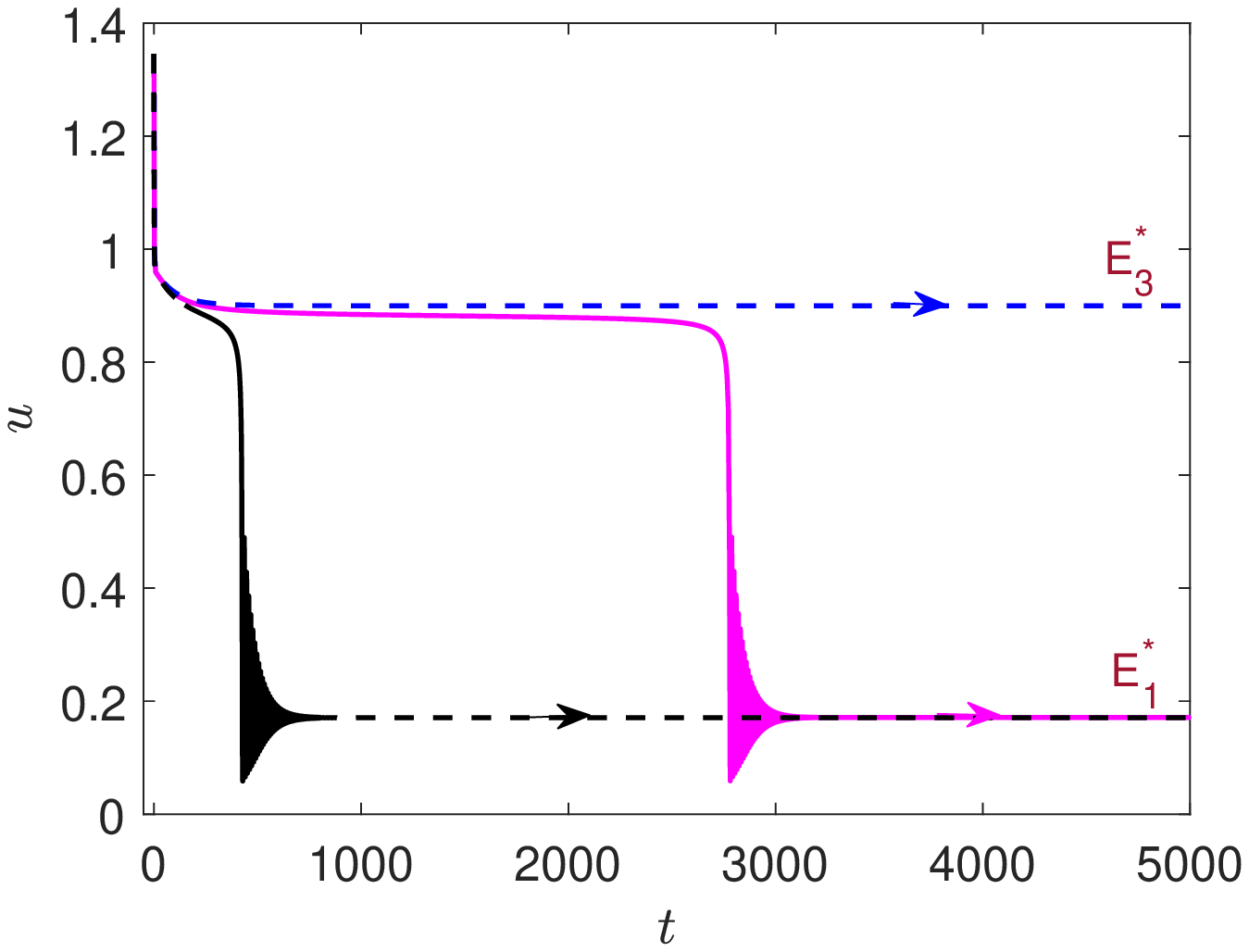}\\
 \caption{}
  \end{subfigure} 
\begin{subfigure}[b]{.48\textwidth}
 \centering
\includegraphics[scale=0.5]{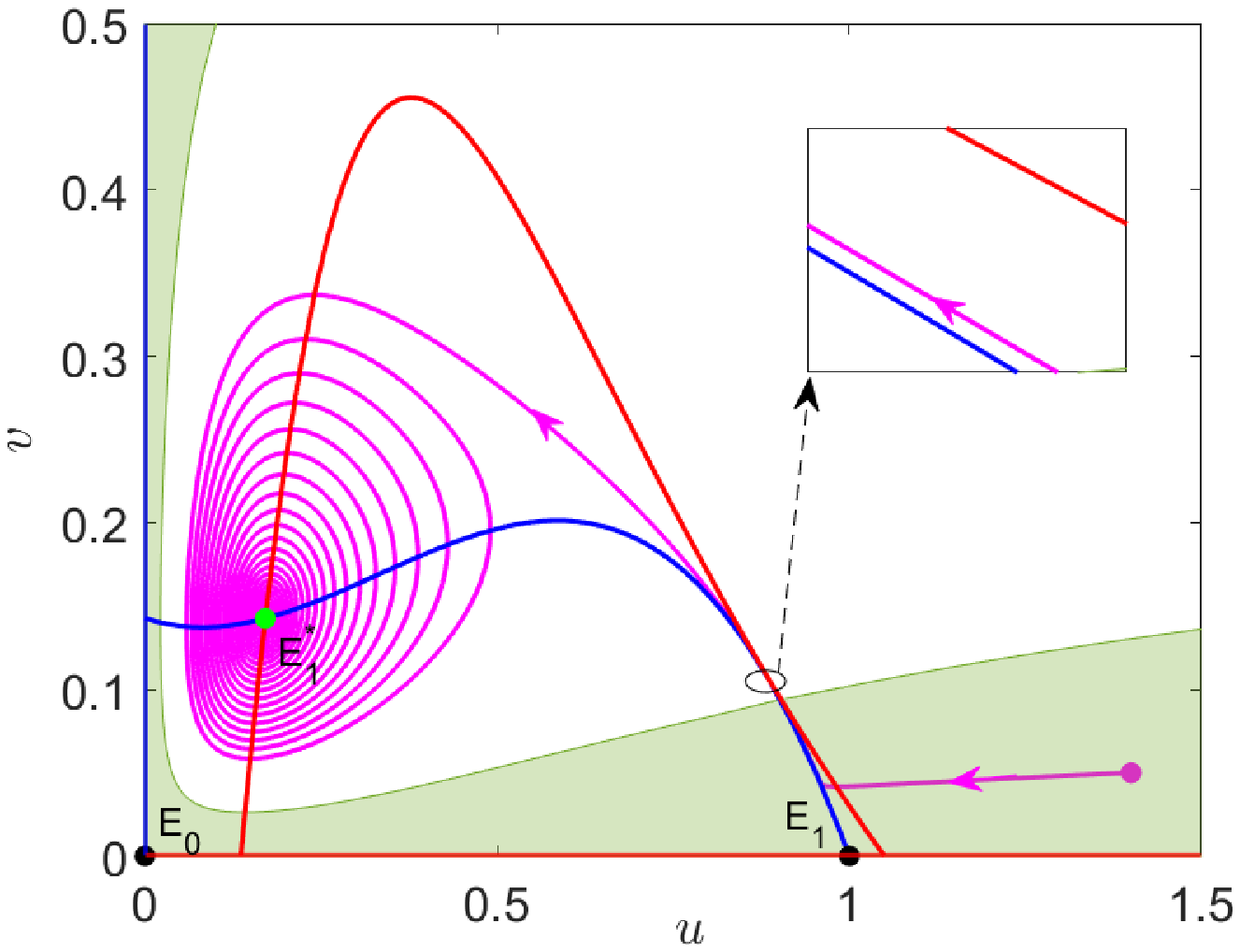}\\  
 \caption{}
  \end{subfigure}
  \caption{Temporal transient dynamics due to saddle-node bifurcation for $a=7,\;b=7$  and $e=0.95$ with  $(u(0),v(0))=(1.4,0.05)$ as initial condition. (a)
  Here, blue, magenta, and black colour curves  correspond to time evolution of $u$ for $f=0.802,\;f=0.8013,$ and $f=0.80$ respectively. (b) Phase trajectory 
 for $f=0.8013$ marked with magenta colour curve. The prey and predator nullclines are also shown. Any solution starting with the initial condition in the shaded region exhibits long transient dynamics similar to that of $f=0.8013$.    }
\label{steadystatetransient}
\end{figure}

\subsubsection*{Temporal transient dynamics}
Here we explore some interesting long-temporal  transient dynamics of the system \eqref{ode}  due to the global and local bifurcations. We consider the same parameter values as in Fig \ref{1dbif}(d) and $f$ is chosen near the SN$_2$ threshold  $f_{SN_2}=0.801336.$ A long %steady-state \textcolor{red}{is steady the correct word?}
transient dynamics is observed in the time evolution of $u$ for values of $f$ near $f_{SN_2}$ starting from same initial condition $(u(0),v(0))\equiv (1.4, 0.05).$  To investigate this in detail, we consider the time evolution of $u$ for three different values of $f$ near  $f_{SN_2}$ [see Fig. \ref{steadystatetransient}(a)].  For $f=0.802,$ the equilibrium point $E_3^*$ is asymptotically stable and the time evolution of $u$ approaches $E_3^*$ rapidly. Note that $E_3^*$ disappears for $f<f_{SN_2}=0.801336$ due to a saddle-node bifurcation. The time evolution of $u$ also rapidly approaches the globally stable equilibrium point $E_1^*$ for $f=0.80$. However, if we take $f=0.8013,$ then $u$ spends a considerable amount of time near the solution for $f=0.802$. However, equilibrium point $E_3^*$ is not present in the system  for $f=0.8013$ and the time evolution of $u$ finally settles into the globally stable equilibrium state $E_1^*$. Here, a shadow of $E_3^*$ acts as a ghost attractor in the system \eqref{ode} which causes such long transient dynamics. This long transient dynamics is also observed for other initial conditions too. Time evolution of $u$ shows similar behaviour for $f=0.8013$ if the initial condition lies in the shaded region shown in Fig. \ref{steadystatetransient}(b). When a  trajectory passes through the narrow region between the nullclines [see the inset of the Fig. \ref{steadystatetransient}(b))], their $u$ directional as well as $v$ directional velocities become very small. This leads to the phenomenon of ghost attractor and long transient dynamics in the system. Figure \ref{steadystatetransient}(a) also illustrates hysteresis exhibited by the system \eqref{ode}. A small change in the parameter $f$ results in a significant drop or change (from $E_3^*$ to $E_1^*$) in the steady state solution.
\begin{figure}[!t]
\begin{subfigure}[b]{.48\textwidth}
 \centering
\includegraphics[scale=0.5]{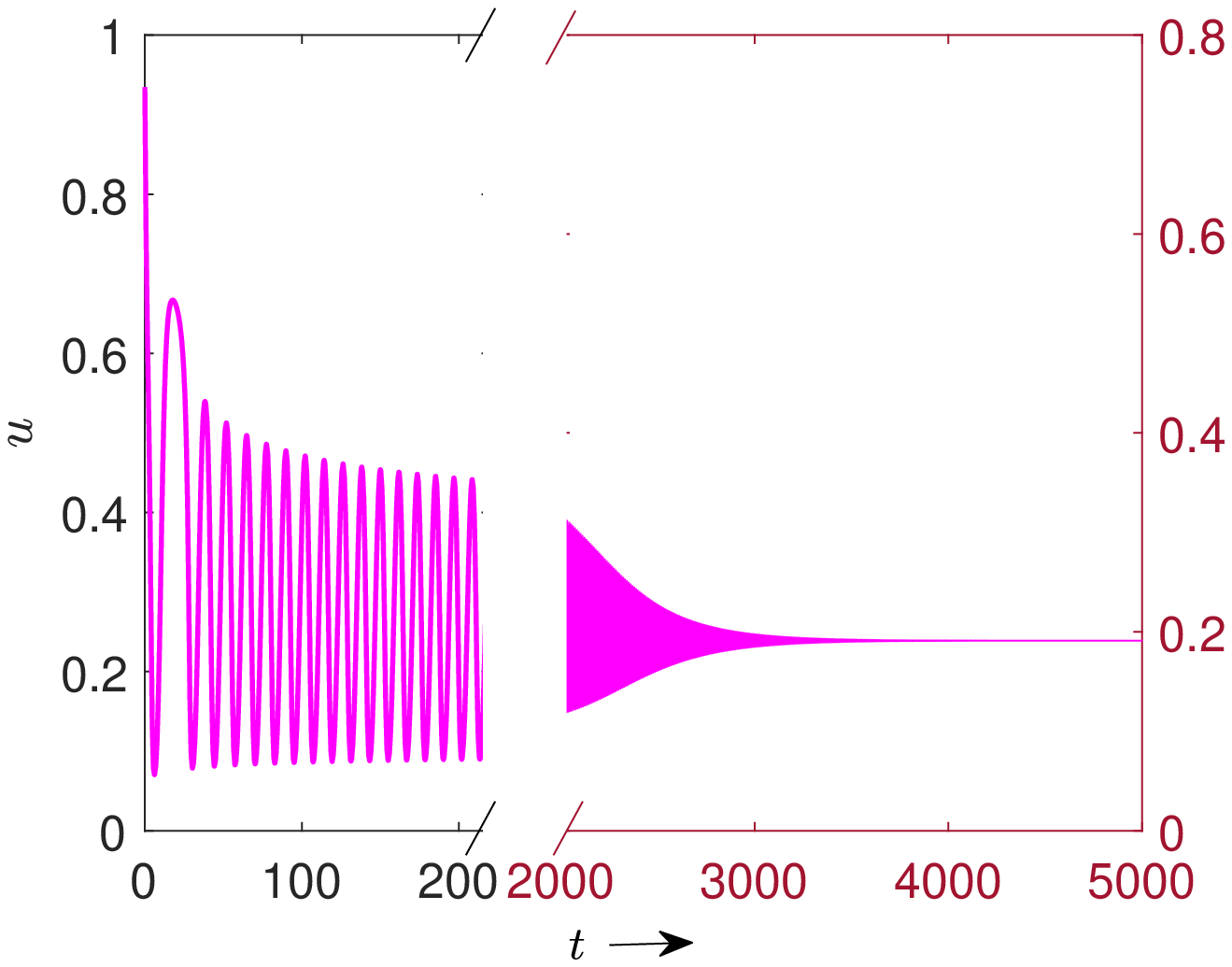}\\ 
 \caption{}
  \end{subfigure}
 \begin{subfigure}[b]{.48\textwidth}
 \centering
\includegraphics[scale=0.5]{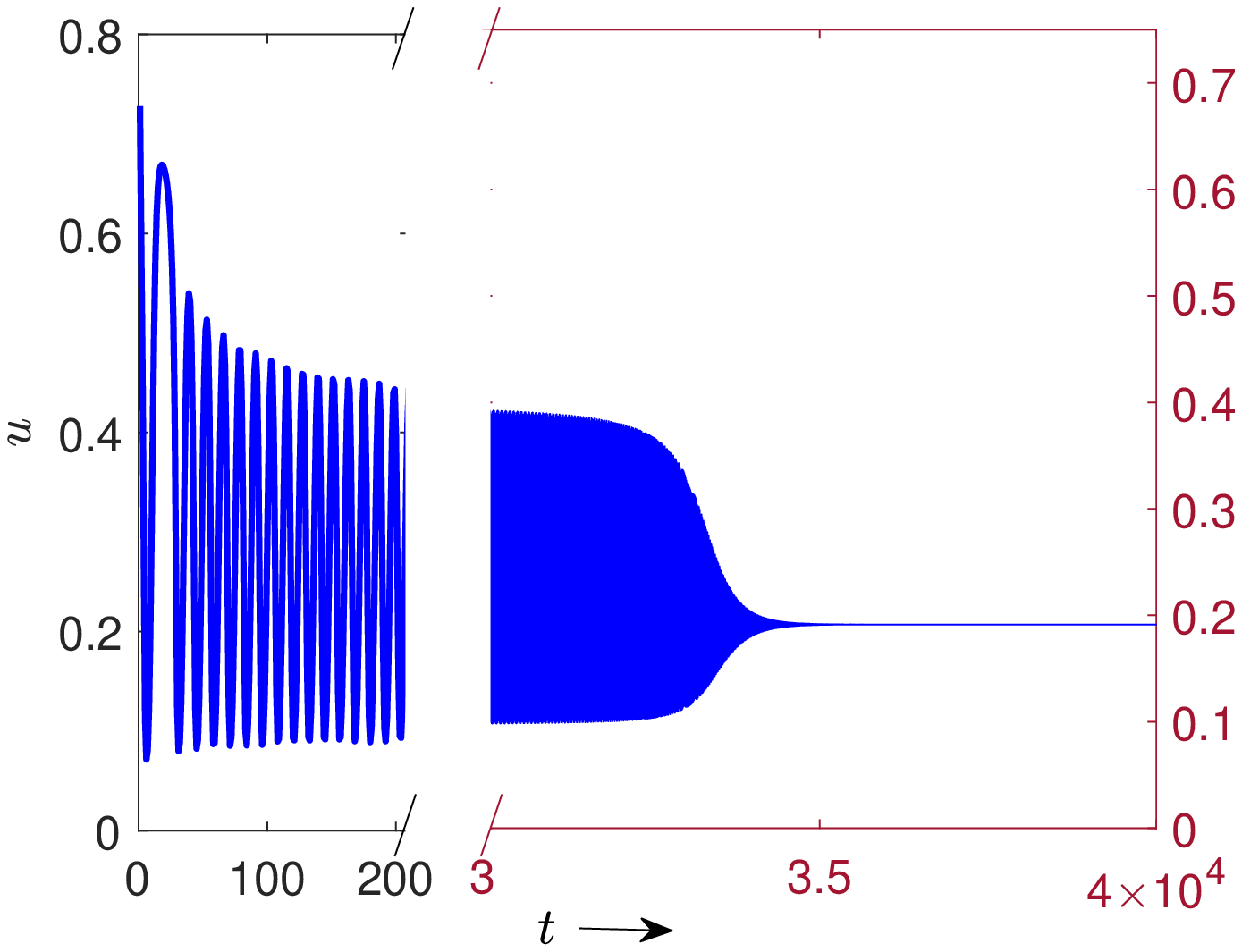}\\ 
 \caption{}
  \end{subfigure} 
 \caption{Time evolution of $u$ showing long oscillatory transient dynamics due to global bifurcation SNLC with $(u(0),v(0))=(0.97,0.45)$ as initial condition: (a) $f=0.867$ and (b) $f=0.867682$. Other parameter values are $a=7,\;b=7, $  and $e=0.95$.}
\label{global_transient}
\end{figure}

The system \eqref{ode} also exhibits oscillatory long transient dynamics due to the global SNLC bifurcation. The coexisting equilibrium point $E_1^*$ is surrounded by an unstable limit cycle, that is also surrounded by a stable limit cycle for $f_{SNLC}=0.867805<f<f_H=0.883805.$  Both the limit cycles collide and disappear from the system dynamics through the SNLC.  We have plotted the time evolution of $u$ for two different values of $f$ near $f_{SNLC}$ in Fig. \ref{global_transient} with initial condition $(u(0),v(0))=(0.97,0.45)$. The coexisting equilibrium point $E_1^*$ is locally asymptotically stable for $f<f_{SNLC}.$ The time evolution of $u$ converges to $E_1^*$ faster for $f=0.867$ compared to that for $f=0.867682.$ Thus, oscillatory long transient dynamics is observed for values of $f$ near $f_{SNLC}$ due to global SNLC bifurcation.

%  \begin{figure}[!t]
% \centering
% \includegraphics[scale=0.6]{t_u_transient5k_saddle.eps}\\
% \caption{ Steady state transient dynamics due to saddle-node bifurcation: here magenta, blue and black color curves correspond to the plot of $t$ vs $u$ started from same initial point $(u(0),v(0))=(1.4,0.05)$ for parameter value $f=0.802,\;f=0.8013$ and $f=0.80$ respectively.  Other parameter values are $a=7,\;b=7, $  and $e=0.95$.  }
% \label{steadystatetransient}
% \end{figure}

\section{Spatio-temporal model} \label{spatialmodel}
Now, we consider the spatio-temporal model \eqref{pde} to examine the effects of taxis and diffusion on the dynamics. We first establish global existence and boundness of the solution using $L^p$-$L^q$ estimates. Next, we examine the stability of the homogeneous steady states by converting the spatio-temporal problem to an eigenvalue problem. Finally, the Turing bifurcation threshold is also obtained.  

\subsection{ Global existence and boundedness of  solution}
The global existence and boundedness of solution with  different taxis and reaction  kinetics have been established in \cite{wu2018dynamics,cai2022asymptotic,carrillo2013uniqueness}. Here, we establish the same with non-monotonic functional response. Let $\di{W^{1,p}(\Omega)}$ be the Sobolev space  consisting of functions in ${L^p(\Omega)}$ that have weak first order partial derivatives and they belong to ${L^p(\Omega)}.$ 
\begin{lemma}\label{lem1}
Assume that $(u_0,v_0)\in [W^{1,p}(\Omega)]^2$ with $u_0,v_0\geq 0$ and $p>n.$ Then the following  hold for the system \eqref{pde}%\textcolor{magenta}{need to define $W^{1,p}$}
\begin{itemize}
    \item[(i)] There exists a positive constant $T$ (maximal existence time) such that the system \eqref{pde} has a unique local-in-time  non-negative solution $(u(x,t),v(x,t))\in [C(\bar\Omega\times [0,T))\cap C^{2,1}(\bar\Omega\times (0,T))]^2.$
    \item[(ii)] The $L_1$ bounds of $(u(x,t),v(x,t)),$ for all time $t\in(0,T),$ satisfy
    $$\int\limits_\Omega u(x,t)dx\leq A \text{ and } \int\limits_\Omega v(x,t)dx\leq B,$$ where $A$ and $B$ are defined in  the proof.
    \item[(iii)] There exists a constant $C$ such that $0\leq u(x,t)\leq C$ and $ v(x,t)\ge 0$ for all $(x,t)\in \bar \Omega\times (0,T).$
\end{itemize}
\end{lemma}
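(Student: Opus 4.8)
I would establish the three assertions in the stated order, since parts (ii) and (iii) use the classical solution produced in part (i). For (i), the idea is to write \eqref{pde} as the quasilinear system $w_{t}=\nabla\cdot\big(\mathcal{A}(w)\nabla w\big)+\mathcal{H}(w)$ for $w=(u,v)^{T}$, with
$$\mathcal{A}(w)=\begin{pmatrix}1&0\\ cv&d\end{pmatrix},\qquad \mathcal{H}(w)=\begin{pmatrix}u(1-u)-\dfrac{auv}{1+bu^{2}}\\[3mm] \dfrac{eauv}{1+bu^{2}}-fv-v^{2}\end{pmatrix}.$$
The diffusion matrix $\mathcal{A}$ is lower triangular with positive diagonal $\{1,d\}$, hence normally elliptic, and Amann's theory for quasilinear parabolic problems (together with the embedding $W^{1,p}(\Omega)\hookrightarrow C(\bar\Omega)$ for $p>n$) yields a unique maximal classical solution on $[0,T)$ with the claimed regularity. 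For nonnegativity I would use that $\mathcal{H}$ is quasi-positive ($\mathcal{H}_{1}(0,v)=0$, $\mathcal{H}_{2}(u,0)=0$) and that the taxis flux $c\,v\nabla u$ vanishes on $\{v=0\}$; thus on $\bar\Omega\times[0,T_{0}]$ with $T_{0}<T$ the component $v$ obeys a scalar linear parabolic inequality with bounded coefficients (since $u\in C^{2,1}$) and nonnegative initial datum, so the maximum principle gives $u\ge0$ and $v\ge0$.

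For the $L^{1}$ bounds in (ii), integrating the $u$-equation over $\Omega$ and using the no-flux condition removes the Laplacian, while the predation term is nonnegative and may be dropped; with $m(t):=\int_{\Omega}u\,dx$ and $\int_{\Omega}u^{2}\,dx\ge|\Omega|^{-1}m(t)^{2}$ (Cauchy--Schwarz), this yields the logistic differential inequality $m'\le m-|\Omega|^{-1}m^{2}$, hence $m(t)\le A:=\max\{\int_{\Omega}u_{0}\,dx,\,|\Omega|\}$. For the predator I would integrate the combination $eu+v$: the Laplacian and the divergence-form taxis term vanish under the Neumann condition, and the gain/loss predation terms $\pm\,eauv/(1+bu^{2})$ cancel exactly, leaving
$$\frac{d}{dt}\int_{\Omega}(eu+v)\,dx=e\int_{\Omega}u(1-u)\,dx-\int_{\Omega}(fv+v^{2})\,dx.$$
Adding $f\int_{\Omega}(eu+v)\,dx$ and using $u(1+f-u)\le\tfrac14(1+f)^{2}$ gives $y'+fy\le\tfrac14 e|\Omega|(1+f)^{2}$ for $y:=\int_{\Omega}(eu+v)\,dx$; a Gr\"onwall argument then yields $\int_{\Omega}v\,dx\le y(t)\le B:=\max\{\int_{\Omega}(eu_{0}+v_{0})\,dx,\,e|\Omega|(1+f)^{2}/(4f)\}$. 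For (iii), since $u,v\ge0$ the prey equation satisfies $u_{t}\le\nabla^{2}u+u(1-u)$ with homogeneous Neumann data, and the spatially constant logistic solution $\bar u(t)$ with $\bar u(0)=\|u_{0}\|_{L^{\infty}(\Omega)}$ is a supersolution, so parabolic comparison gives $u(x,t)\le\bar u(t)\le C:=\max\{\|u_{0}\|_{L^{\infty}(\Omega)},1\}$; nonnegativity of $v$ was already obtained in (i).

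The delicate part is confined to (i): one must check carefully that the cross-diffusion structure really fits Amann's framework (normal ellipticity and the choice $(u_{0},v_{0})\in[W^{1,p}]^{2}$ with $p>n$) and that the advective flux does not destroy the positivity of $v$; the remaining estimates are routine Gr\"onwall and comparison arguments. I emphasise that this lemma stops at local existence: the substantially harder $L^{\infty}$ bound on $v$, which is what would upgrade $T$ to $+\infty$, is not established here and is obtained afterwards from the $L^{p}$--$L^{q}$ smoothing estimates for the Neumann heat semigroup.
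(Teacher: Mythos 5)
Your proposal is correct and follows essentially the same route as the paper: Amann's quasilinear theory with the lower-triangular, normally elliptic diffusion matrix for local existence and nonnegativity, integration plus Cauchy--Schwarz giving a logistic differential inequality for the $L^{1}$ bound on $u$, a Gr\"onwall argument on a linear combination of the two masses for the $L^{1}$ bound on $v$, and parabolic comparison with the constant logistic supersolution for the $L^{\infty}$ bound on $u$. The only (harmless) difference is that you use the combination $\int_{\Omega}(eu+v)\,dx$, in which the predation terms cancel exactly, whereas the paper works with $K_{1}+eK_{2}=\int_{\Omega}(u+ev)\,dx$; both yield an admissible constant $B$.
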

\begin{proof}
$(i)$ Here we apply Amann theory \cite{amann1990dynamic} to prove the local existence of $(u(x,t),v(x,t))$. In terms of $U=(u,v)^T,$ the system \eqref{pde} is written as
\begin{equation}\label{10}
     \begin{cases}
 \frac{\partial U}{\partial t}=\nabla\cdot(\mathcal{A}(U)\nabla U)+F(U), \; x\in \Omega,\;t>0,\\
 \frac{\partial U}{\partial n}=(0,0)^T,\;x\in \partial\Omega,\;t>0,\\
 U(x,0)=(u_0,v_0)^T,\;x\in\;\Omega,
 \end{cases}
 \end{equation}
 where \begin{equation}\label{Au}
     \mathcal{A}(U)=\begin{pmatrix}
     1 & 0\\ cv & d
 \end{pmatrix} \text{ and } F(U)=\begin{pmatrix}
     F_1(U)\\F_2(U)
 \end{pmatrix}.
 \end{equation}
 Since the matrix $\mathcal{A}(U)$ is positive definite, the system \eqref{pde} is normally parabolic and the local existence of classical solutions follows from Theorem 7.3 in \cite{amann1990dynamic}. Thus, there exists a maximal existence time $T>0$  such that the system \eqref{pde} has a unique  solution $(u(x,t),v(x,t))\in C(\bar\Omega\times [0,T))\cap C^{2,1}(\bar\Omega\times (0,T)),$ with $u,v\geq0.$ %\textcolor{magenta}{positive or non-negative?}
 \\
 $(ii)$ Let $\di{K_1(t)=\int\limits_\Omega u(x,t)dx \text{ and } K_2(t)=\int\limits_\Omega v(x,t)dx.}$
 Integrating the first equation and using no-flux boundary condition, we find 
 $$\frac{d K_1}{dt}=\int\limits_\Omega F_1(u,v)dx\leq\int\limits_\Omega(u-u^2)dx.$$
Using  H\"{o}lder inequality, we find $\di{\int\limits_\Omega u^2\;dx\geq \frac{1}{|\Omega|}\left(\int\limits_\Omega u\;dx\right)^2},$ which leads to $$\frac{d K_1}{dt}\leq \left(K_1-\frac{K_1^2}{|\Omega|}\right).$$
Therefore, $\di{K_1(t)\leq \text{ max } \Big\{||u_0||_{L^1(\Omega)},|\Omega|\Big\}\equiv A.}$
We also have $$\frac{d}{dt}\left(K_1+e K_2\right)\leq \int\limits_\Omega u\;dx-ef\int\limits_\Omega v\;dx =-f(K_1+eK_2)+(f+1)K_1 .   $$
Since $K_1\leq A,$ the above inequality leads to 
$$
\di{K_2\leq \frac{1}{e}(K_1+eK_2)\leq \frac{1}{e}\left(||u_0||_{L^1(\Omega)}+e||v_0||_{L^1(\Omega)}+ \frac{(f^2+1)}{f}A\right)\equiv B}.$$
%Since $K_1\leq A, $ and using Gr\"{o}nwall's inequality we obtain $$\di{K_2\leq \frac{1}{e}(K_1+eK_2)\leq \frac{1}{e}\text{ max } \Big\{ ||u_0+ev_0||_{L^1(\Omega)},\; \left(f+\frac{1}{f}\right)A\Big\}\equiv B}.$$
$(iii)$ From \eqref{pde}, we have 
\begin{equation*}
     \begin{cases}
 \frac{\partial u}{\partial t}=\nabla^2 u+F_1(u,v)\leq \nabla^2 u+u(1-u), \; x\in \Omega,\;t>0,\\
 \frac{\partial u}{\partial n}=0,\;x\in \partial\Omega,\;t>0,\\
 u(x,0)=u_0(x),\;x\in\;\Omega.
 \end{cases}
 \end{equation*}
 %\textcolor{magenta}{why use dot here}
Using the maximum principle for the parabolic equation \cite{pao2012nonlinear}, we have 
$$0\leq u(x,t)\leq \text{ max } \{||u_0||_{L^\infty(\Omega)},1\}\equiv C\;\text{for all } x\in \Omega,\;t>0.$$
Now,  the second equation in \eqref{pde} can be treated  as a scalar linear equation in $v$. This implies $v(x,t)\ge 0\; \text{for all } x\in \Omega,\;t>0.$ 
\end{proof}
\begin{theorem}
For any  $(u_0,v_0)\in [W^{1,p}(\Omega)]^2$ with $u_0,v_0\geq 0$ and $p>n,$ the system \eqref{pde} admits a unique global solution $(u(x,t),v(x,t))\in [C(\bar\Omega\times [0,\infty))\cap C^{2,1}(\bar\Omega\times (0,\infty))]^2\equiv \mathcal{S}(\bar\Omega)$ and the solution is uniformly bounded, i.e.,
there exists a constant $R(||u_0||_{L^\infty(\Omega)},||v_0||_{L^\infty(\Omega)})>0$ such that $$||u(x,t)||_{L^\infty(\Omega)}+||v(x,t)||_{L^\infty(\Omega)}\leq R(||u_0||_{L^\infty(\Omega)},||v_0||_{L^\infty(\Omega)}) \text{ for all } t>0.$$
\end{theorem}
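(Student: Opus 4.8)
The plan is to reduce the statement to a single a priori estimate. By Lemma~\ref{lem1}(i) the solution exists on a maximal interval $[0,T_{\max})$ and is unique there, and Amann's theory provides the usual extensibility alternative: either $T_{\max}=\infty$, or $\limsup_{t\uparrow T_{\max}}\big(\|u(\cdot,t)\|_{W^{1,p}(\Omega)}+\|v(\cdot,t)\|_{W^{1,p}(\Omega)}\big)=\infty$. Hence it suffices to produce a bound on $\|u(\cdot,t)\|_{L^\infty(\Omega)}+\|v(\cdot,t)\|_{L^\infty(\Omega)}$ that holds for all $t\in(0,T_{\max})$ and is uniform in $t$: once $u$ and $v$ are known to be bounded, iterating parabolic $L^p$- and Schauder estimates on the two scalar equations (treating the taxis term $c\,\nabla\!\cdot(v\nabla u)$ and the reactions as forcings in the appropriate spaces) bounds the $W^{1,p}$-norms on every finite time interval, so the blow-up alternative is excluded and $T_{\max}=\infty$; and because the estimates below come with the dissipative factor $e^{-\lambda_1 t}$ on the mean-free part (here $\lambda_1>0$ is the first nonzero Neumann eigenvalue of $-\Delta$) together with the $t$-independent $L^1$-bounds $A$ and $B$ from Lemma~\ref{lem1}(ii), the resulting $L^\infty$-bound is uniform in time and yields the constant $R$.

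Since $0\le u\le C$ by Lemma~\ref{lem1}(iii), only $v$ remains. I would bootstrap through the variation-of-constants formula
\[
v(\cdot,t)=e^{td\Delta}v_0+c\!\int_0^t e^{(t-s)d\Delta}\,\nabla\!\cdot\!\big(v\nabla u\big)(\cdot,s)\,ds+\int_0^t e^{(t-s)d\Delta}\,h(\cdot,s)\,ds,\qquad h:=\tfrac{eauv}{1+bu^2}-fv-v^2,
\]
combined with the $L^p$--$L^q$ smoothing estimates for $\{e^{t\Delta}\}$, namely $\|e^{t\Delta}w\|_{L^q}\le C\big(1+t^{-\frac n2(\frac1p-\frac1q)}\big)\|w\|_{L^p}$ with the mean-free part additionally decaying like $e^{-\lambda_1 t}$, and $\|e^{t\Delta}\nabla\!\cdot\!\mathbf f\|_{L^q}\le C\big(1+t^{-\frac12-\frac n2(\frac1r-\frac1q)}\big)e^{-\lambda_1 t}\|\mathbf f\|_{L^r}$. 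Two ingredients drive the iteration. First, because $u$ is bounded one has $|F_1(u,v)|\le C_1+C_2\,v$, so applying $\nabla e^{t\Delta}$ to the Duhamel formula for $u$ turns any uniform $L^p$-bound on $v$ into a uniform $L^q$-bound on $\nabla u$, for $q$ in the range that keeps $s\mapsto s^{-\frac12-\frac n2(\frac1p-\frac1q)}$ integrable at $0$. Second, $v\nabla u\in L^r$ with $\tfrac1r=\tfrac1p+\tfrac1q$ by Hölder, while $h\le M_1 v$ controls the reaction contribution and the genuinely dissipative term $-v^2$ in $h$ is kept to absorb the leading super-linear part; feeding this back through the $v$-formula raises the Lebesgue exponent of $v$. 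Coupling the two steps and iterating finitely many times gives $v\in L^\infty(\Omega\times(0,T_{\max}))$ with a $t$-uniform bound, which together with the bound on $u$ completes the proof.

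The main obstacle is the exponent bookkeeping in this iteration, and in particular its start: a priori only the weak $L^1$-bound of Lemma~\ref{lem1}(ii) is available, and for $n\ge2$ one application of the smoothing estimates need not improve $L^1$ to a larger exponent. The remedy is to run the two estimates in tandem rather than sequentially --- equivalently, to use the energy-estimate form: test the $v$-equation by $v^{p-1}$, integrate the taxis term by parts onto $\nabla(v^p)$, use Young's inequality against the diffusion term together with a Gagliardo--Nirenberg inequality, and absorb the residual $\int_\Omega v^{p}|\nabla u|^2$-type term using the $-\int_\Omega v^{p+1}$ produced by $-v^2$; the exponent gains then compound and $L^\infty$ is reached after a number of steps depending only on $n$. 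A secondary point requiring care is that every constant must be independent of $t$, which is why the sharp semigroup estimates (with $e^{-\lambda_1 t}$ on the mean-free part and the mean treated separately via the conserved-type $L^1$-bounds) are needed in place of the crude local-in-time versions. The sign $c>0$ (repellent prey-taxis) plays no essential role but is consistent with the argument.
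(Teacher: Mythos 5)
Your proposal is correct and follows essentially the same route as the paper: reduce to an a priori $L^\infty$ bound via Amann's extensibility criterion, bound $\nabla u$ through the Neumann heat semigroup $L^p$--$L^q$ estimates applied to Duhamel's formula for $u$, obtain uniform $L^q$ bounds on $v$ for all $q\ge 2$ by testing the $v$-equation with $v^{q-1}$ and combining Young's and Gagliardo--Nirenberg inequalities, and conclude by Moser--Alikakos iteration. The only minor difference is that the paper first establishes $\|\nabla u\|_{L^\infty(\Omega)}$ outright and uses it to control the cross term $\int_\Omega v^{q-1}\nabla v\cdot\nabla u$, whereas you run the gradient estimate in tandem at finite integrability and absorb $\int_\Omega v^{q}|\nabla u|^2$ against the $-\int_\Omega v^{q+1}$ dissipation; both mechanisms close the estimate.
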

\begin{proof}
    We have already established $\di{||u(x,t)||_{L^\infty(\Omega)}\leq C}$ in Lemma \ref{lem1}($iii$).  However, we have obtained $L_1$ bound for $v(x,t)$ in Lemma \ref{lem1}($ii$). To prove the theorem, we  need to establish $L_\infty$ bound for $v(x,t).$ The detailed steps are described below. 
    
    First, we establish the boundedness of $||\nabla u||_{L^\infty(\Omega)}.$ Let
    $\{e^{t\nabla^2}\}_{t\geq0}$ be the Neumann heat semigroup generated by 
    $-\nabla^2 $ and  $\lambda_1>0$ be the first non-zero eigenvalue of $-\nabla^2 $ in  $\Omega$. Then using the variation of  constants formula for $u,$ we get 
    \begin{equation}\label{voc}
        u(x,t)=e^{t\nabla^2}u_0(x)+\int_0^t e ^{(t-s)\nabla^2}F_1(u(x,s),v(x,s))ds.
    \end{equation}
    Using $L^p$-$L^q$ estimates of Neumann heat semigroup (see Lemma 1.3 in \cite{winkler2010aggregation}) and \eqref{voc}, we obtain
    \begin{equation*}
    \begin{split}
    ||\nabla u(\cdot,t)||_{L^\infty(\Omega)}\leq &||\nabla e^{t\nabla^2}u_0||_{L^\infty(\Omega)}+\int_0^t ||\nabla e ^{(t-s)\nabla^2}F_1(u(\cdot,s),v(\cdot,s))||_{L^\infty(\Omega)}ds
\\&\leq C_1\left(1+t^{-\frac{1}{2}-\frac{n}{2p}}\right)e^{-\lambda_1t}|| u_0||_{L^\infty(\Omega)}+
\\&\int_0^t C_1\left(1+(t-s)^{-\frac{1}{2}-\frac{n}{2p}}\right)e^{-\lambda_1(t-s)}||F_1(u(\cdot,s),v(\cdot,s))||_{L^\infty(\Omega)}\,ds
\\&\leq C_2|| u_0||_{L^\infty(\Omega)}+C_3||F_1(u,v)||_{L^\infty(\Omega)},
\end{split}
\end{equation*}
for some positive constant $C_1,C_2$ and $C_3.$ Since $||F_1(u,v)||_{L^\infty(\Omega)}\leq \frac{1}{4}$, we have $||\nabla u||_{L^\infty(\Omega)}\leq D(|| u_0||_{L^\infty(\Omega)})$ for some positive constant $D(|| u_0||_{L^\infty(\Omega)}).$ %\textcolor{red}{notation confusion. Is it $D\times$ or $D$ depends on?} \textcolor{green}{$D$ depends on $|| u_0||_{L^\infty(\Omega)}$}

Next, we establish boundedness of $||v||_{L^q(\Omega)}$ where $q\geq 2.$   Multiplying the second equation of \eqref{pde} by $v^{q-1}$ and integrating over $\Omega,$ we find 
\begin{equation}\label{11}
\int\limits_\Omega v^{q-1}v_t dx -d\int\limits_\Omega v^{q-1}\nabla^2 v dx=c\int\limits_\Omega v^{q-1}\nabla\cdot (v\nabla u)dx+\int\limits_\Omega v^{q-1}F_2(u,v)dx.
\end{equation}
Using Gauss's divergence theorem and no-flux boundary conditions, we arrive at   
$$
    \frac{1}{q}\frac{d}{dt}\int\limits_\Omega v^q dx+d (q-1)\int\limits_\Omega v^{q-2}|\nabla v|^2 dx=\int\limits_\Omega v^{q-1}F_2(u,v)dx -c (q-1) \int\limits_\Omega v^{q-1}\nabla v\cdot\nabla u\; dx.
$$
Since
$$
F_2(u,v)\leq \left(\frac{ea}{2\sqrt{b}}-f\right)v,
$$
we finally find
\begin{equation}\label{121}
    \frac{1}{q}\frac{d}{dt}\int\limits_\Omega v^q dx+d (q-1)\int\limits_\Omega v^{q-2}|\nabla v|^2 dx
    \leq \left(\frac{ea}{2\sqrt{b}}-f\right)
    \int\limits_\Omega v^q dx -c (q-1) \int\limits_\Omega v^{q-1}\nabla v\cdot\nabla u\; dx.
\end{equation}
%Now, using the following %\\$\di{\int\limits_\Omega v^{q-%1}v_t dx=\frac{1}{q}\frac{d}%{dt}\int\limits_\Omega v^q %dx\text{,  } F_2(u,v)\leq %\left(\frac{ea}{2\sqrt{b}}-%f\right)v,}$ \\
%$\di{\int\limits_\Omega\nabla\cd%ot(\nabla v. v^{q-%1})dx=\int\limits_\Omega v^{q-%1}\nabla^2 v dx+(q-%1)\int\limits_\Omega v^{q-%2}|\nabla v|^2 dx=0, }$\\
%$\di{ \int\limits_\Omega %\nabla\cdot((v \nabla u) v^{q-%1})dx=\int\limits_\Omega v^{q-%1}\nabla \cdot(v\nabla u)dx+(q-%1)\int\limits_\Omega v^{q-%1}\nabla v\cdot\nabla u\; %dx=0,}$\\
Note that the inequality $\di{\frac{|\vec a|^2}{2\varepsilon}+\frac{\varepsilon |\vec b|^2}{2}\ge \vec a\cdot\vec b}$ holds for any 
 $\varepsilon>0$. Considering $\vec a=v^{\frac{q-2}{2}} \nabla v$, $\vec b=c v^{\frac{q}{2}}\nabla u$ and $\varepsilon=\frac{1}{d},$ we get
$$
\frac{d v^{q-2}|\nabla v|^2}{2}+\frac{c^2 v^q |\nabla u|^2}{2d}\ge c v^{q-1}\nabla v\cdot\nabla u .$$
Using this inequality and the estimate $||\nabla u||_{L^\infty(\Omega)}\leq D(|| u_0||_{L^\infty(\Omega)})$ (derived before) in \eqref{121}, we get
\begin{eqnarray}
%\begin{split}
    \frac{1}{q}\frac{d}{dt}\int\limits_\Omega v^q dx+f\int\limits_\Omega v^q dx+\frac{d (q-1)}{2}\int\limits_\Omega v^{q-2}|\nabla v|^2 dx&\di{\leq \frac{ea}{2\sqrt{b}}\int\limits_\Omega v^q dx+\frac{c^2(q-1) }{2d} \int\limits_\Omega v^q |\nabla u|^2 dx} \nonumber
    \\& \di{\leq \left(\frac{ea}{2\sqrt{b}}+D^2\right) \int\limits_\Omega v^q dx\equiv K \int\limits_\Omega v^q dx.}\label{simp}
%\end{split}
\end{eqnarray}
For any $\varepsilon_1>0,$ the inequality $\di{\int\limits_\Omega v^q dx\leq\varepsilon_1||\nabla v^{\frac{q}{2}}||_2^2+M}$ holds for some $M>0$ (see the Appendix A). Using this in \eqref{simp}, we get
\begin{equation*}
    \frac{d}{dt}\int\limits_\Omega v^q dx+fq\int\limits_\Omega v^q dx +\frac{q d (q-1)}{2}\int\limits_\Omega v^{q-2}|\nabla v|^2 dx\leq q K(\varepsilon_1||\nabla v^{\frac{q}{2}}||_2^2+M)
\end{equation*}
Using
$$\di{\int\limits_\Omega v^{q-2}|\nabla v|^2 dx=\frac{4}{q^2}||\nabla v^{\frac{q}{2}}||_2^2}
$$ 
in the  above, we get
\begin{equation*}
    \frac{d}{dt}\int\limits_\Omega v^q dx+fq\int\limits_\Omega v^q dx \leq q K\left(\varepsilon_1||\nabla v^{\frac{q}{2}}||_2^2+M - \frac{2d (q-1)}{Kq^2}\nabla v^{\frac{q}{2}}||_2^2 \right)
\end{equation*}
Setting  $\di{\varepsilon_1=\frac{2d(q-1)}{Kq^2}},$ we have $$\frac{d}{dt}\int\limits_\Omega v^q dx+fq\int\limits_\Omega v^q dx\leq MqK,
$$ 
which leads to 
$$
\di{ ||v||_{L^q(\Omega)}\leq  \left( ||v_0||^q_{L^q(\Omega)}+\left(\frac{MK}{f}\right)\right)^{1/q}\equiv N}.
$$
Now using  Sobolev embedding \cite{tao2013competing} and  Moser–Alikakos iteration procedure \cite{tao2011boundedness}, there exists a constant $P(|| u_0||_{L^\infty(\Omega)},||v_0||_{L^\infty(\Omega)})>0$ such that 
$$\di{||v(.,t)||_{L^\infty(\Omega)}\leq P(|| u_0||_{L^\infty(\Omega)},||v_0||_{L^\infty(\Omega)}).}$$ The proof is now complete.
%\textcolor{red}{See if I made something wrong} \textcolor{green}{Okay sir.}
%Finally, using  Gr\"{o}nwall's inequality,  we obtain $$\di{ ||v||_{L^q(\Omega)}\leq \text{ max } \left\{ ||v_0||_{L^q(\Omega)},\; \left(\frac{M}{f}\right)^{\frac{1}{q}}\right\}\equiv N}.$$
\end{proof}
\subsection{Homogeneous steady-state analysis}
%\textcolor{red}{why you need unique coexisting equilibrium point?}
Here, we discuss the stability of the homogeneous steady states. Note that a homogeneous steady-state $E(\Tilde{u},\Tilde{v})$ of the system \eqref{10} corresponds to an equilibrium point $(\tilde u,\tilde v)$ of the temporal system \eqref{ode}. %We choose the parameters in such a way that the temporal system \eqref{ode} has a unique co-existing equilibrium point. 
Introducing $u(x,t)=\tilde u+\bar u(x,t)$ and $v(x,t)=\tilde v +\bar v(x,t),$ and linearizing about $E(\Tilde{u},\Tilde{v}),$  we find
$$ \frac{\partial \bar U}{\partial t}=\mathcal{A}(E)\nabla^2 \bar U+J(E) \bar U\equiv \mathcal{L}(E)\bar U,$$
where $\bar U\equiv (\bar u(x,t),\bar v(x,t))\in [C(\bar\Omega\times [0,\infty))\cap C^{2,1}(\bar\Omega\times (0,\infty))]^2.$ Further, $J(E)$ and $\mathcal{A}(E)$ have been defined in \eqref{jaco}  and  \eqref{Au} respectively. %\textcolor{red}{This place may cause confusion}

Consider the eigenvalue problem \begin{eqnarray}
\begin{split}
     -\nabla^2 p&= k p \quad  \mbox{in} \;\Omega,\\
 \frac{\partial p}{\partial n}&=0\quad\;\mbox{on}\;  \partial \Omega.
\end{split} \label{evpr}
 \end{eqnarray}

For an eigenvalue $k_i$ of the eigenvalue problem \eqref{evpr}, let $E(k_i)$ be the corresponding eigenfunction space  with $0=k_0<k_1<\cdots<k_i<\cdots$.  Further, assume that $\{\phi_{i,j}: j=1,\cdots,\text{dim}\big(E(k_i)\big)\}$ be an orthogonal basis set of $E(k_i)$ and $\mathcal{U}_{ij}=\{c \phi_{i,j}: c=(c_1,c_2)^T \}$. Let  $\mathcal{U}_i=\bigoplus_{j=1}^{\mbox{dim}(E(k_i))} \mathcal{U}_{ij}$  be the direct sum of $\mathcal{U}_{ij}$. 
  It can be shown that  \begin{equation}
    \mathcal{U}\equiv\left\{\left(\phi,\psi\right)^T \in C^2(\bar \Omega)\times  C^2(\bar \Omega): \frac{\partial \phi}{\partial n}=\frac{\partial \psi}{\partial n}=0\; \text{ for } x\in \partial\Omega\right\}=\bigoplus_{i=1}^\infty \mathcal{U}_i,\label{defw}
  \end{equation}
  and $\mathcal{U}_i$ is invariant under the operator $\mathcal{L}$. Now, $\lambda$ is an eigenvalue of $\mathcal{L}$ if and only if $\lambda$ is an eigenvalue of the matrix $\mathcal{L}_i=-k_i\mathcal{A}(E)+J(E)$  for some $i\geq0.$  The characteristic equation of $\mathcal{L}_i$ is given by 
%To find the  stability of the homogeneous system, we need to find the eigenvalues of the system
% \begin{equation}\label{H1}
%     \mathcal{L}(E)U=\lambda U.
% \end{equation}
% Let $\varphi_j$  be an eigenfunction corresponding to an eigenvalue $k_j$  of the operator $-\nabla^2$ on $\Omega,$ where $0=k_0<k_1\leq k_2\leq \cdots$ and $\lim\limits_{j\rightarrow \infty}k_j=\infty.$ If $\lambda'$ be an eigenvalue of the system \eqref{H1} corresponding to the eigenfunction $U',$ then we write
% \begin{equation}\label{H2}
%     U'=\begin{pmatrix}
%     \sum\limits_{j=0}^{\infty}a_j \varphi_j\\\sum\limits_{j=0}^{\infty}b_j \varphi_j
% \end{pmatrix}.
% \end{equation}
% Substituting \eqref{H2} into \eqref{H1} and equating coefficient of $\varphi_j$, \textcolor{red}{why equate? Or due to orthogonallity? } we obtain $$\mathcal{L}_j\begin{pmatrix}
%     a_j\\b_j
% \end{pmatrix}=\lambda'\begin{pmatrix}
%     a_j\\b_j
% \end{pmatrix},$$
% where $\di{\mathcal{L}_j=-k_j\mathcal{A}(E)+J(E).}$ The characteristic equation of $\mathcal{L}_j$ is given by  
\begin{equation}
        \lambda^2-\mathrm{T}(k_i)\lambda+\mathrm{H}(k_i)=0,
        \label{chracteristic}
            \end{equation}
    where 
    \begin{alignat*}{4}
    \mathrm{T}(k_i)&=a_{10}+b_{01}-(1+d)k_i,\\
    \mathrm{H}(k_i)&=dk_i^2-\{a_{10}d+b_{01}-c\Tilde{v}a_{01}\}k_i+(a_{10}b_{01}-a_{01}b_{10}).
    \end{alignat*}
    Now, from  the principle of linearized stability of parabolic partial differential equation \cite{drangeid1989principle}, the homogeneous steady state $E$ is said to be locally asymptotically stable if $T(k_i)<0,$ and $H(k_i)>0$ for all $i.$  %\textcolor{red}{should we also mention in terms of T \& H}
    \begin{theorem}\label{Th3}
The followings hold for the system \eqref{10}: %\textcolor{red}{ should it be \eqref{10}?}:
\begin{itemize}
    \item[(a)]  Trivial homogeneous steady-state $E_0(0,0)$ is always unstable.
    \item[(b)] Axial homogeneous steady-state $E_1(1,0)$ is locally  asymptotically  stable if $f>f_{TC}$ and  unstable if $f<f_{TC}$.
     \item[(c)]  Coexisting homogeneous steady state $E_1^*(u_1^*,v_1^*)$ is locally asymptotically  stable if \\$\di{a_{10}<\text{min}\left\{v_1^*,{\frac { c u_1^*\left( {u_1^*}-1 \right) +v_1^*}{d}}\right\}\equiv R_1}$ and unstable if $a_{10}>v_1^*.$ 
\end{itemize}
\end{theorem}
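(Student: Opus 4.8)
The plan is to read off stability directly from the mode decomposition set up immediately before the theorem: by \eqref{defw} each $\mathcal{U}_i$ is $\mathcal{L}$-invariant, so $E$ is locally asymptotically stable exactly when every root $\lambda$ of the characteristic polynomial \eqref{chracteristic} of $\mathcal{L}_i=-k_i\mathcal{A}(E)+J(E)$ has negative real part for all $i\ge 0$, i.e.\ when $\mathrm{T}(k_i)<0$ and $\mathrm{H}(k_i)>0$ for all $i\ge 0$; conversely, if $\mathrm{T}(k_i)>0$ for some $i$ then that $2\times 2$ block has an eigenvalue with positive real part and $E$ is unstable. Since $\mathrm{T}(k_i)=a_{10}+b_{01}-(1+d)k_i$ is strictly decreasing in $k_i$ and $\mathrm{H}(k_i)$ is an upward parabola in $k_i$ (leading coefficient $d>0$), in each case I only need to control $\mathrm{T}$ at $k_0=0$ together with the constant term and the linear coefficient of $\mathrm{H}$.

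For (a), at $E_0$ one has $\tilde v=0$, so the $k_0=0$ block is $\mathcal{L}_0=J(E_0)$, whose eigenvalues are $1$ and $-f$; the positive eigenvalue $1$ forces instability. For (b), at $E_1$ again $\tilde v=0$, and moreover $b_{10}=0$, so $J(E_1)$ — and hence every $\mathcal{L}_i=-k_i\,\mathrm{diag}(1,d)+J(E_1)$ — is upper triangular with eigenvalues $-1-k_i$ and $(f_{TC}-f)-dk_i$. The first is always negative; the second is negative for every $k_i\ge 0$ iff $f>f_{TC}$, whereas for $f<f_{TC}$ the mode $i=0$ already supplies the positive eigenvalue $f_{TC}-f$. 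This settles (b).

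For (c) I would first simplify the Jacobian entries at $E_1^*$ via the two nullcline identities. From $f_2(u_1^*,v_1^*)=0$ one gets $\tfrac{eau_1^*}{1+b{u_1^*}^2}-f=v_1^*$, so $b_{01}=-v_1^*$; from $f_1(u_1^*,v_1^*)=0$ one gets $\tfrac{a}{1+b{u_1^*}^2}=\tfrac{1-u_1^*}{v_1^*}$, so $a_{01}=-\tfrac{u_1^*(1-u_1^*)}{v_1^*}$. Hence $\mathrm{T}(k_i)=a_{10}-v_1^*-(1+d)k_i$, with maximum $\mathrm{T}(0)=a_{10}-v_1^*$, so $\mathrm{T}(k_i)<0$ for all $i$ iff $a_{10}<v_1^*$; if $a_{10}>v_1^*$ the $i=0$ block has positive trace and $E_1^*$ is unstable. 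For $\mathrm{H}$, the constant term is $a_{10}b_{01}-a_{01}b_{10}=\det J(E_1^*)>0$ by Proposition \ref{prop2}, and substituting the two identities the coefficient of $k_i$ becomes $v_1^*+cu_1^*(u_1^*-1)-a_{10}d$. Whenever $a_{10}\le \tfrac{cu_1^*(u_1^*-1)+v_1^*}{d}$ this coefficient is non-negative, so $\mathrm{H}(k_i)=dk_i^2+[\,v_1^*+cu_1^*(u_1^*-1)-a_{10}d\,]k_i+\det J(E_1^*)>0$ for all $k_i\ge 0$. Combining with $a_{10}<v_1^*$, stability follows for $a_{10}<R_1$.

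The computations are otherwise routine; the two points that need care are, first, recognizing that the stated sufficient condition only requires the \emph{linear} coefficient of $\mathrm{H}(k_i)$ to be non-negative, so no discriminant estimate is needed (this is exactly why the $\min$ in $R_1$ appears — it merges the $\mathrm{T}$-condition with the $\mathrm{H}$-condition), and second, importing $\det J(E_1^*)>0$ from the temporal analysis (Proposition \ref{prop2}) rather than re-deriving it, which is what makes the constant term of $\mathrm{H}$ harmless. Note that the stability half of (c) is only sufficient, while the instability half (driven by the spatially homogeneous mode $i=0$) is sharp in $a_{10}$.
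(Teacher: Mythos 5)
Your proposal is correct and follows essentially the same route as the paper: the eigenfunction decomposition into the invariant subspaces $\mathcal{U}_i$, the characteristic polynomial \eqref{chracteristic} of $\mathcal{L}_i$, and the sign conditions $\mathrm{T}(k_i)<0$, $\mathrm{H}(k_i)>0$, with the $\min$ in $R_1$ arising exactly as you describe from combining $\mathrm{T}(k_0)<0$ with non-negativity of the linear coefficient of $\mathrm{H}$. Your treatment is if anything slightly more explicit than the paper's (reading off eigenvalues from the triangular blocks in (a)--(b), and importing $\det J(E_1^*)>0$ from Proposition~\ref{prop2} to handle the constant term of $\mathrm{H}$, which the paper leaves implicit), but these are presentational differences, not a different method.
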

\begin{proof}
 $(a)$ For $E_0$, we find $\mathrm{T}(k_i)=1-f-(1+d)k_i$ and $\mathrm{H}(k_i)=dk_i^2+(d+f)k_i-f.$ Since $\mathrm{H}(k_0)<0,$ $E_0$ is unstable.
    
$(b)$ For $E_1,$ we have 
$$\mathrm{T}(k_i)=-1+\left(f_{TC} -f\right)-(1+d)k_i
$$ 
and 
$$\mathrm{H}(k_i) = dk_i^2+\left(d-\left(f_{TC} -f\right)\right)k_i-\left(f_{TC} -f\right).
$$ 
If $f>f_{TC},$ then $\di{\mathrm{T}(k_i)<0 \text{ and } \mathrm{H}(k_i)>0}$ for all $i.$ Thus, $E_1$ is locally  asymptotically  stable if $f>f_{TC}$. If $f<f_{TC},$ then $\di{ \mathrm{H}(k_0)<0}$ and hence $E_1$ is unstable.\\

$(c)$ If  $a_{10}>-b_{01}=v_1^*,$ then $E_1^*$ is unstable since $\di{\mathrm{T}(k_0)>0}.$  Now if $a_{10}<R_1,$ then we find $a_{10}+b_{01}<0$ and $a_{10}d+b_{01}-cv_1^*a_{01}<0$. These conditions lead to 
   \begin{equation}\label{las}
       \mathrm{T}(k_i)<0 \text{ and }\mathrm{H}(k_i)>0 \text{ for all } i.
   \end{equation}
%   \textcolor{red}{T, H or $L$?. Same mix up in the remark as well}
Hence, $E_1^*$ is locally asymptotically stable if $a_{10}<R_1.$
  \end{proof}
   \begin{remark}
Whenever homogeneous steady state  $E_2^*$ exists, it is unstable since $\mathrm{H}(k_0)<0.$ In case of $E_3^*$ (when it exists), it is  locally asymptotically stable if $\mathrm{T}(k_i)<0$ and $\mathrm{H}(k_i)>0$ for all $i.$ It is unstable if either of these conditions does not hold for some $i$. 
\end{remark}
\subsection{Turing instability}\label{Turingsubsection}
In the case of Turing instability, the coexisting equilibrium point is  asymptotically  stable for the temporal system \eqref{ode}. But the corresponding homogeneous steady state of the spatio-temporal model \eqref{pde} becomes unstable under spatial perturbation. Thus,     
$ \mathrm{T}(k_i)<0 \text{ and }\mathrm{H}(k_i)>0 \text{ for } i=0,$ but at least one of these conditions is violated for some $i\ge 1$. Note that $\di{\mathrm{T}(k_i)<\mathrm{T}(k_0)}$ for all $i\geq 1.$  Hence, for Turing instability, there exists some $k_i$ ($i\ge 1$) for which $\mathrm{H}(k_i)<0.$ The eigenvalues $k_i$ are discrete for a bounded $\Omega.$ To find the Turing bifurcation threshold, we consider an unbounded domain for which a continuous spectrum of eigenvalues is obtained. Hence, the Turing instability condition becomes $\mathrm{H}(k)<0$ for some $k\ne 0$. Here, we consider the homogeneous steady state corresponding to the coexisting equilibrium point $E_1^*(u_1^*,v_1^*).$ 

The minimum value of $\mathrm{H}(k)$ is 
$$\di{\mathrm{H}_{min}={\frac {-{{\it a_{10}}}^{2}{d}^{2}+ \left(  \left( 2 {\it a_{10}} c v_1^*-
4 {\it b_{10}} \right) {\it a_{01}}+2 {\it a_{10}} {\it b_{01}} \right) d-
 \left( {\it a_{01}} c v_1^*-{\it b_{01}} \right) ^{2}}{4d}},}
 $$ which occurs at wavenumber   $$\di{k_{min}=\,{\frac {{\it a_{10}}\,d+{\it b_{01}}-{\it a_{01}}\,cv_1^*}{2d}}}.
 $$
Now, at the threshold of the Turing bifurcation, we have $\mathrm{H}_{min}=0$ at critical wavenumber $k_T=k_{min},$ and the corresponding critical Turing value  $c_T$ of $c$ is determined as follows  
\begin{equation}\label{ct}
    c_T=\frac{ a_{10} d+ b_{01}-2\sqrt {d\, \text{det}(J)}}{ a_{01}  v_1^*}.
\end{equation}
The system becomes Turing unstable for $c>c_T$. We plot the Turing surface boundary in the $f$-$d$-$c$ space for parameter values $a=7$, $b=5.65$, and  $e=0.95$ (see Fig. \ref{TuringCurve}). The  Turing instability  sets in above the Turing boundary surface. In front of the Hopf surface, we have an additional  Hopf instability. These two instabilities, divide the $f$-$c$-$d$ space into four regions: (i) a stable region below  the Turing boundary surface and behind the Hopf plane, (ii)  a Turing region above  the Turing boundary surface and behind the Hopf plane,  (iii) a Hopf region below  the Turing boundary surface and in front of the Hopf plane, and (iv) a Turing-Hopf region above  the Turing boundary surface and in front of the Hopf plane.  We will discuss the spatio-temporal dynamics in these regions in the 
 section \ref{numerical}.%\textcolor{red}{specify the section?}
% \begin{figure}[!ht]
% \begin{subfigure}[b]{.48\textwidth}
%  \centering
% \includegraphics[scale=0.6]{f_c_turing_d_40.eps}\\ 
%  \caption{}
%   \end{subfigure}
%  \begin{subfigure}[b]{.48\textwidth}
%  \centering
% \includegraphics[scale=0.6]{f_c_turing_d_80.eps}\\ 
%  \caption{}
%   \end{subfigure} 
% %   \hspace{1.5em}
%  \caption{Turing curve in $f$-$c$ plane: (a) for $d=40,$ (b) $d=80$. Other parameter values are $a=7,\;b=5.65\; e=0.95.$ }
% \label{TCF}
% \end{figure}
\begin{figure}[!ht]
\centering
\includegraphics[scale=0.6]{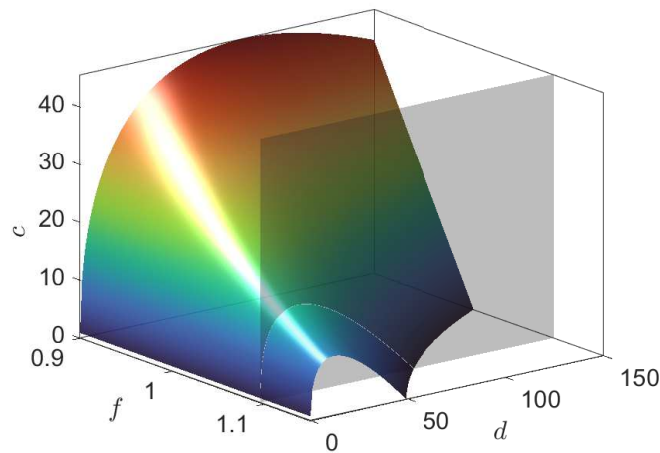}\\
\caption{Turing boundary surface (coloured surface) and Hopf plane (grey vertical plane) in the $f$-$d$-$c$ space. Note that the system \eqref{pde} exhibits Turing instability  above the coloured  surface and Hopf instability in front of the grey plane. The values of the other parameters are  $a=7$, $b=5.65$, and  $e=0.95$. }
\label{TuringCurve}
\end{figure}

\section{Weakly nonlinear analysis}\label{wna}
%\textcolor{red}{need to say 1D somewhere}
Here, we perform weakly nonlinear analysis using the method of multiple scales near the Turing bifurcation threshold $(c=c_T)$. We derive the amplitude equations for the Turing solutions, which also shows the existence of non-homogeneous stationary solutions near the Turing thresholds. For simplicity, we consider one-dimensional spatial domain as $\Omega=[0,L]\in \mathbb{R}$.   First,  we employ a Taylor series expansion up to the third order to expand the system (\ref{pde}) around $E_*^1(u_*^1,v_*^1)$ 
    \begin{eqnarray}
\frac{\partial \boldsymbol{W}}{\partial t}=\mathscr{L}^c\boldsymbol{W}+\mathscr{N}+\frac{1}{2}\mathscr{B}(\boldsymbol{W},\boldsymbol{W})+\frac{1}{6}\mathscr{T}(\boldsymbol{W},\boldsymbol{W},\boldsymbol{W})\equiv \mathcal{Z}(\boldsymbol{W}), 
\label{12}
\end{eqnarray}
where 
$$
\boldsymbol{W}= \begin{pmatrix}u-u_*^1\\v-v_*^1 \end{pmatrix},\quad \mathcal{L}^c= \begin{pmatrix}
a_{10}+\partial_{xx}  & a_{01}  \\
b_{10}+cv_1^*\partial_{xx} & b_{01}+d \partial_{xx} 
\end{pmatrix},\quad\mbox{and}\quad \mathscr{N}=  \begin{pmatrix}0\\ c(v\nabla^2 u+\nabla u\cdot \nabla v)\end{pmatrix}
$$
denote the perturbation vector, linear operator, and nonlinear prey-taxis term, respectively. Further, the bi-linear operator $\mathscr{B}$ and tri-linear operator $\mathscr{T}$ are defined as \begin{equation*}
    \mathscr{B}(\boldsymbol{P_1},\boldsymbol{P_2})=\begin{pmatrix}
	\di{\sum\limits_{i,j\in \{ 1,2\}} \frac{\partial^2 F_1}{\partial  y_i \partial  y_j }|_{(u^1_*,v^1_*)} P_1^{(i)}P_2^{(j)}} \\ \di{ \sum\limits_{i,j\in \{ 1,2\}} \frac{\partial^2 F_2}{\partial  y_i \partial  y_j }|_{(u^1_*,v^1_*)} P_1^{(i)}P_2^{(j)}}
\end{pmatrix},   \end{equation*}
\begin{equation*}
    \mathscr{T}(\boldsymbol{P_1},\boldsymbol{P_2},\boldsymbol{P_3})= \begin{pmatrix}
	\di{\sum\limits_{i,j,k\in \{ 1,2\}} \frac{\partial^2 F_1}{\partial  y_i \partial  y_j \partial  y_k }|_{(u^1_*,v^1_*)} P_1^{(i)}P_2^{(j)} P_3^{(k)}} \\ \di{ \sum\limits_{i,j,k\in \{ 1,2\}} \frac{\partial^2 F_2}{\partial  y_i \partial  y_j \partial  y_k }|_{(u^1_*,v^1_*)} P_1^{(i)}P_2^{(j)} P_3^{(k)}}
\end{pmatrix},
\end{equation*}
where $\boldsymbol{P}_m= \begin{pmatrix}
 P_m^{(1)}\\P_m^{(2)}   
\end{pmatrix}$, for $m=1,2,3$;  $y_1=u$ and $y_2=v.$ 

We introduce slow time scale $\tau=\varepsilon^2 t,$ where  $\varepsilon$  measures the distance between prey-taxis parameter $c$ and the critical Turing  threshold $c_T$ as $c=c_T+\varepsilon^2 c_{2}$. We also expand the solution of the system \eqref{12} in terms of $\varepsilon$ as $$\boldsymbol{W}=\varepsilon \boldsymbol{W_1}+\varepsilon^2 \boldsymbol{W_2}+\varepsilon^3 \boldsymbol{W_3}+O(\varepsilon^4).$$ 
Accordingly, the operator  $\mathscr{L}^c$, $\mathscr{B}$ and $\mathscr{T}$ in \eqref{12} can be expressed as 
\begin{alignat*}{4} &\mathscr{L}^c =\mathscr{L}^{c_T}+\varepsilon^2 c_{2} v_1^*\begin{pmatrix}
    0 & 0\\1 &0
\end{pmatrix}\nabla^2+O(\varepsilon^4),\\ &
\mathscr{B}(\boldsymbol{W},\boldsymbol{W})=\varepsilon^2\mathscr{B}(\boldsymbol{W_1},\boldsymbol{W_1})+2\varepsilon^3 \mathscr{B} (\boldsymbol{W_1}, \boldsymbol{W_2})+ O(\varepsilon^4),\\ &\mathscr{T}(\boldsymbol{W},\boldsymbol{W},\boldsymbol{W})=\varepsilon^3\mathscr{T}(\boldsymbol{W_1},\boldsymbol{W_1},\boldsymbol{W_1})+ O(\varepsilon^4).
\end{alignat*}
Substituting all the above expansions into \eqref{12}, we obtain a series of equations in $\boldsymbol{W_i}\, (i=1,2,\cdots)$ by collecting the terms at each order in $\varepsilon.$ Each of these equations is subjected to  the Neumann boundary condition.

\noindent 
At $O(\varepsilon),$ we have $\mathscr{L}^{c_T}\boldsymbol{W_1}=0,$ whose solution is $$\boldsymbol{W_1}=A(\tau)\boldsymbol{\Phi}\; \text{cos}(k_T x)\; \text{ with } \boldsymbol{\Phi}\in \text{Ker}(J(E_1^*)-k_T^2 D^{c_T}).$$
Here, $A(\tau)$ is the amplitude of the growing pattern, unknown at the moment. We normalize $\Phi$ vector as $$\boldsymbol{\Phi}=\begin{pmatrix}
    1\\ \phi
\end{pmatrix} \text{ with } \phi=\frac{k_T^2-a_{10}}{a_{01}}.$$
At $O(\varepsilon^2)$, we obtain 
\begin{equation} \label{13}
    \mathscr{L}^{c_T}\boldsymbol{W_2}=(\boldsymbol{h_{20}}+\boldsymbol{h_{22}}\text{ cos}(2k_Tx))A^2\equiv \boldsymbol{H},
\end{equation}
 $\di{ \text{ where } \boldsymbol{h_{20}}=-\frac{1}{4}\mathscr{B}(\boldsymbol{\Phi},\boldsymbol{\Phi}),\; \boldsymbol{h_{22}}=-\frac{1}{4}\mathscr{B}(\boldsymbol{\Phi},\boldsymbol{\Phi})+c_Tk_T^2\begin{pmatrix}
    0\\ \phi
\end{pmatrix}}.$ 
Using Fredholm alternative theorem, the system \eqref{13} has a solution  when $\di{<\boldsymbol{H},\boldsymbol{\Psi}>\equiv \int\limits_0^L \boldsymbol{H}\, \boldsymbol{\Psi} \; dx =0},$ where $\boldsymbol{\Psi}=\begin{pmatrix}
    \psi\\ 1
\end{pmatrix} \text{ cos}(k_Tx) \in \text{Ker}( \mathscr{L}^{c_T})^\dagger$ with $\psi=\frac{dk_T^2-b_{01}}{a_{01}}.$ Note that $A^\dagger$ denotes the adjoint of the operator $A.$ Note that here the Fredholm alternative theorem automatically  holds. %\textcolor{red}{meaning of the previous line}
The solution of \eqref{13} is given by $$\boldsymbol{W}_2=(\boldsymbol{k_{20}}+\boldsymbol{k_{22}}\text{ cos}(2k_Tx))A^2,$$  where $J(E_1^*)\boldsymbol{k_{20}}=\boldsymbol{h_{20}}$ and $\left(J(E_1^*)-4k_T^2 D^{c_T}\right)\boldsymbol{k_{22}}=\boldsymbol{h_{22}}.$

At $O(\varepsilon^3)$, we find 
\begin{equation} \label{14}    \mathscr{L}^{c_T}\boldsymbol{W_3}=\left(\frac{d A }{d \tau} \boldsymbol{\Phi} +\boldsymbol{g_{11}}A +\boldsymbol{g_{31}}A^3\right)\text{ cos}(k_Tx)+\boldsymbol{g_{33}}\text{ cos}(3k_Tx)A^3\equiv \boldsymbol{G},
\end{equation}
where $$\boldsymbol{g_{11}}=c_{2}k_T^2 v_1^*\begin{pmatrix}
    0\\1
\end{pmatrix},\; \boldsymbol{g_{31}}=-\mathscr{B}(\boldsymbol{\Phi},\boldsymbol{k}_{20})-\frac{1}{2}\mathscr{B}(\boldsymbol{\Phi},\boldsymbol{k}_{22})-\frac{1}{8}\mathscr{T}(\boldsymbol{\Phi},\boldsymbol{\Phi},\boldsymbol{\Phi})+c_Tk_T^2\begin{pmatrix}
    0\\k_{20}^{(1)}\phi+k_{22}^{(1)}-\frac{1}{2}k_{22}^{(2)}
\end{pmatrix}, $$ $\text{ and }\di{\boldsymbol{g_{33}}=-\frac{1}{2}\mathscr{B}(\boldsymbol{\Phi},\boldsymbol{k}_{22})-\frac{1}{24}\mathscr{T}(\boldsymbol{\Phi},\boldsymbol{\Phi},\boldsymbol{\Phi})+c_Tk_T^2\begin{pmatrix}
    0\\3k_{22}^{(1)}\phi+\frac{3}{2}k_{22}^{(2)}
\end{pmatrix}.}$

\noindent Using Fredholm alternative theorem in  equation (\ref{14}), we obtain $\di{<\boldsymbol{G},\boldsymbol{\Psi}> =0},$ which leads to the cubic  Stuart-Landau equation  %\textcolor{red}{need to give a name?}
\begin{equation}
    \frac{d A}{d \tau } = {\sigma} A -  {l} A^3, 
    \label{Amp3}
\end{equation}
where $$   {\sigma}=-\frac{<\boldsymbol{g}_{11}, \boldsymbol{\Psi} >}{< \boldsymbol{\Phi}, \boldsymbol{\Psi}>} \;\; \text{and}\; \;  {l} = \frac{<\boldsymbol{g}_{31}, \boldsymbol{\Psi} >}{< \boldsymbol{\Phi}, \boldsymbol{\Psi}>}.$$ 
Here, $\sigma$ is always positive %\textcolor{red}{for $c>c^T$?} 
and (\ref{Amp3}) is similar to normal form of pitchfork bifurcation.  
\subsection*{Supercritical case}
If $l>0$, then the system (\ref{Amp3}) has  two stable fixed points $A_*=\pm \sqrt{\frac{\sigma}{l}}$ and an unstable fixed point $A_*=0.$ 
These stable fixed points represent the Turing pattern with wavenumber $k_T$ for $c>c_T$ and all non-zero solutions approach them after a long time, i.e.,     $$\lim_{{t} \to {\infty}} A(t)\equiv A_{\infty} =\pm \sqrt{\frac{\sigma}{l}}.$$ 
Therefore, the final solution of (\ref{12}) is $\boldsymbol{U}_\infty(x)=\sqrt{c-c_T} A_{\infty}\Phi\cos(k_T  x)+{O}(\varepsilon^2)$ for $c>c_T$.

\subsection*{Subcritical case}
For $l<0,$ the system (\ref{Amp3}) does not admit any nontrivial equilibrium solution. Here, we need to include higher order terms in the expansion. Thus,   we employ a Taylor series expansion up to the fifth order to expand the system (\ref{pde}) around $E_*^1(u_*^1,v_*^1)$
\begin{equation}
  \frac{\partial \boldsymbol{W}}{\partial t}=\mathcal{Z}(  \boldsymbol{W})+\frac{1}{24}\mathscr{Q}(\boldsymbol{W},\boldsymbol{W},\boldsymbol{W},\boldsymbol{W})+\frac{1}{60}\mathscr{P}(\boldsymbol{W},\boldsymbol{W},\boldsymbol{W},\boldsymbol{W},\boldsymbol{W}),
\end{equation}
where $\mathscr{Q}$ and  $\mathscr{P}$ are the quad-linear operator  and penta-linear operator  defined similar to $\mathscr{B}$ and $\mathscr{T}.$  We also modify the multiple time scales $t=t(\tau,\tau_1)$, where $\tau=\epsilon^2 t$ and $\tau_1=\epsilon^4 t$; $c$ and $\boldsymbol W$ up to  the fifth order of $\varepsilon$ as  %\textcolor{red}{what is $\tau_1$?}
\begin{alignat*}{4} 
%&\frac{\partial}{\partial t}=\varepsilon^2 \frac{\partial}{\partial \tau}+\varepsilon^4 \frac{\partial}{\partial \tau_1}+O(\varepsilon^6),\\ 
&c=c_T+\varepsilon^2 c_2+\varepsilon^4 c_4+O(\varepsilon^6),\\ &\boldsymbol{W}=\varepsilon \boldsymbol{W_1}+\varepsilon^2 \boldsymbol{W_2}+\varepsilon^3 \boldsymbol{W_3}+\varepsilon^4 \boldsymbol{W_4}+\varepsilon^5 \boldsymbol{W_5}+O(\varepsilon^6).
\end{alignat*}
The right hand side of  \eqref{14} using \eqref{Amp3} becomes
$$
G=\left(\boldsymbol{G_{11}}A +\boldsymbol{G_{13}}A^3\right)\text{ cos}(k_Tx)+\boldsymbol{G_{33}}\text{ cos}(3k_Tx)A^3,$$
where $\boldsymbol{G_{11}}=\boldsymbol{g_{11}}+\sigma A \boldsymbol{\Phi}, $   $\boldsymbol{G_{13}}=\boldsymbol{g_{13}}-l A^3 \boldsymbol{\Phi}, $ and 
 $\boldsymbol{G_{33}}=\boldsymbol{g_{33}}. $ Note that the amplitude now becomes $A=A(\tau,\tau_1).$
The solution of \eqref{14} is given by $$\boldsymbol{W}_3=(\boldsymbol{C_{11}}+\boldsymbol{C_{13}}A^2)A\text{ cos}(k_Tx)+\boldsymbol{C_{33}}\text{ cos}(3k_Tx)A^3,$$  where $\left(J(E_1^*)-k_T^2 D^{c_T}\right)\boldsymbol{C_{11}}=\boldsymbol{G_{11}},$
 $\left(J(E_1^*)-k_T^2 D^{c_T}\right)\boldsymbol{C_{13}}=\boldsymbol{G_{13}},$ and $\left(J(E_1^*)-9k_T^2 D^{c_T}\right)\boldsymbol{C_{33}}=\boldsymbol{G_{33}}.$ 

 At $O(\varepsilon^4)$, we obtain  
\begin{equation} \label{17}    \mathscr{L}^{c_T}\boldsymbol{W_4}=\boldsymbol{H}_{20}A^2+\boldsymbol{H}_{22}A^2\text{ cos}(2k_Tx)+\boldsymbol{H}_{40}A^4+\boldsymbol{H}_{42}A^4\text{ cos}(2k_Tx)A^2+  \boldsymbol{H}_{44}A^4\text{ cos}(4k_Tx)\equiv \boldsymbol{H},
\end{equation}
Again the Fredholm
alternative condition automatically holds due to the choice of our perturbation. Further solving \eqref{17}, we find $$\boldsymbol{W}_4=\boldsymbol{D}_{20}A^2+\boldsymbol{D}_{22}A^2\text{ cos}(2k_Tx)+\boldsymbol{D}_{40}A^4+\boldsymbol{D}_{42}A^4\text{ cos}(2k_Tx)A^2+  \boldsymbol{D}_{44}A^4\text{ cos}(4k_Tx),$$  where $J(E_1^*)\boldsymbol{D_{20}}=\boldsymbol{H_{20}},$
 $\left(J(E_1^*)-4k_T^2 D^{c_T}\right)\boldsymbol{D_{22}}=\boldsymbol{H_{22}},$ $J(E_1^*)\boldsymbol{D_{40}}=\boldsymbol{H_{40}},$ $\left(J(E_1^*)-4k_T^2 D^{c_T}\right)\boldsymbol{D_{42}}=\boldsymbol{H_{42}},$  and $\left(J(E_1^*)-16k_T^2 D^{c_T}\right)\boldsymbol{D_{44}}=\boldsymbol{H_{44}}.$
 
Finally, at $O(\varepsilon^5)$, we obtain  
\begin{equation} \label{18}    \mathscr{L}^{c_T}\boldsymbol{W_5}=\frac{\partial A}{\partial \tau_1}\boldsymbol{\Phi}+\boldsymbol{I}_{11}A \text{ cos}(k_Tx)+\boldsymbol{I}_{31}A^3 \text{ cos}(k_Tx)+\boldsymbol{I}_{51}A^5 \text{ cos}(k_Tx)+\boldsymbol{I}^*\equiv \boldsymbol{I},
\end{equation}
where $\boldsymbol{I}^*$ contains the orthogonal terms of $\boldsymbol{W}_1$ in $\boldsymbol{I}$. Further, $\boldsymbol{H}_{ij}$ and $\boldsymbol{I}_{ij}$ are explicitly computed in terms of the system parameters. As these expressions are quite involved, we omit them for the sake of brevity. 

 Using the Fredholm solvability condition once more in \eqref{18}, we arrive at a quintic Stuart-Landau equation 
 \begin{equation} \label{amp5}
     \frac{\partial A}{\partial \tau_1}=\sigma'A-l'A^3+\rho' A^5,
 \end{equation}
 where  $$ \sigma'=-\frac{<\boldsymbol{I}_{11}, \bar{\Phi} >}{< \Phi, \bar{\Phi}>},\;  l' = \frac{<\boldsymbol{I}_{31}, \bar{\Phi} >}{< \Phi,\bar{\Phi}>} \;\; \text{and}\; \;  \rho'=-\frac{<\boldsymbol{I}_{51}, \bar{\Phi} >}{< \Phi, \bar{\Phi}>}.$$ 
Adding \eqref{Amp3} and \eqref{amp5}, we finally obtain  
\begin{equation} \label{AMP35}
    \frac{d A}{d t } =\varepsilon^2(\hat{\sigma}A - \hat{l} A^3 +\hat{\rho} A^5),
\end{equation}
where $ \hat{\sigma}={\sigma} +\epsilon^2 \sigma' $,$\; \hat{l}= {l}+\epsilon^2 l' $ and $\hat{\rho}=\epsilon^2 \rho'$. 
If $\hat{\sigma}>0,\;\hat{l}<0$ and $ \hat{r}<0,$ then equation (\ref{AMP35}) admits two stable equilibria $\,\pm  \sqrt{\frac{\hat{l}-\sqrt{\hat{l}^2-4\hat{r}\hat{\sigma}}}{2\hat{r}}}.$ Thus, we obtain the amplitude  of the stationary pattern  solution for $c>c_T.$ 

% \begin{figure}[H]
% \centering
% \includegraphics[scale=0.6]{spatio_temporal_bif_d_80.eps}\\
% \caption{Continuation of 4 mode solution. Other parameter values are $a=7$, $b=5.65$, $f=0.98$, $e=0.95$, $d=80$  and $L=45$. }
% \label{TuringCurve}
% \end{figure} 

\section{Numerical results} \label{numerical}
Here, we first validate the results of weakly nonlinear analysis (WNA) and then discuss the effects of prey-taxis parameter $c$ on the Turing solutions. Next we investigate spatio-temporal transient dynamics and non-homogeneous oscillatory solution in the Hopf region. %\textcolor{blue}{need to discuss in the four regions} 
\subsection{Validation of WNA results}
We take the parameter values $a=7$, $b=5.65$,  $e=0.95,$  $f=0.98$, and $d=80$. For this parameter set, the coexisting homogeneous steady state $E_1^*(0.210978,0.141065)$ is asymptotically stable under homogeneous perturbation. The corresponding Turing threshold is $c_T=26.889081$ %\textcolor{red}{do we need to keep so many places after decimal?} 
with critical wave number $k_T=0.283128.$ Using WNA, we find the cubic  Stuart-Landau equation  %\textcolor{red}{value of $c$?}
  \begin{equation}\label{3wna}
        \frac{d A}{d \tau } = 0.054965 A +2.925863 A^3,
  \end{equation}
  which shows the subcritical nature of the bifurcation. Extending WNA up to the fifth order, we obtain quintic Stuart-Landau equation \begin{equation} \label{5wna}
         \frac{\partial A}{\partial \tau_1}=-0.000463A+6.926411 A^3-43.690556 A^5.
    \end{equation}
    Using \eqref{3wna} and \eqref{5wna}, we obtain the amplitude equation \begin{equation}   \label{AMP}  \frac{d A}{d t } =\varepsilon^2( \hat{\sigma}A - \hat{l} A^3 +\hat{\rho} A^5),
    \end{equation}
    where $\hat{\sigma}=0.054965-0.000463 \varepsilon^2,\;\hat{l}=-(2.925863+6.926411\varepsilon^2),$ and $\hat{\rho}=-43.690556 \varepsilon^2.$ The equilibrium amplitude $A_\infty$ of the Turing  pattern solution at $c=c_T(1+\varepsilon^2)$ satisfies $\hat{\sigma} - \hat{l} A^2 +\hat{\rho} A^4=0,$ and \textcolor{black}{the stationary} solution of $u$ up to third order is %\textcolor{red}{new notation needed perhaps}
\begin{equation}
    \begin{split}
        u_\infty(x)=&u_1^*+\varepsilon A_\infty \text{ cos}(k_Tx)+\varepsilon^2 A^2_\infty (1.8+1.811195\text{ cos}(k_Tx))\\&+\varepsilon^3 A_\infty ((1+A_\infty^2)\text{ cos}(k_Tx)+1.969026 \text{ cos}(3k_Tx)). 
    \end{split}
\end{equation}
\begin{figure}[!b]
\centering
\includegraphics[scale=0.6]{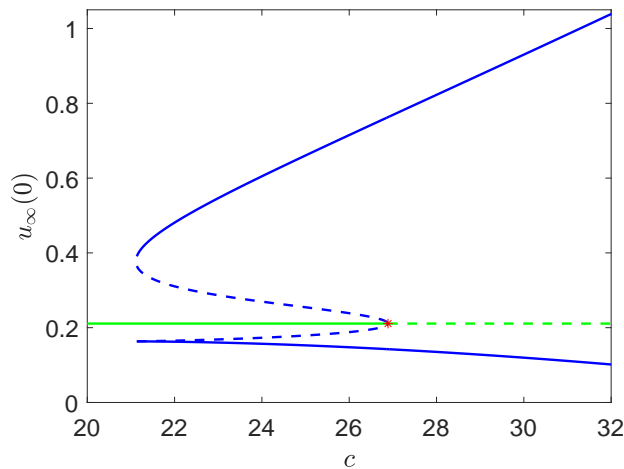}\\
\caption{Plot of the solution up to third order from the weakly nonlinear analysis. Here green and blue color curves denote homogeneous stationary solution and pattern solution, respectively. The solid and dashed curves respectively correspond to the stable and unstable branches.  Other parameter values are  $a=7$, $b=5.65$, $f=0.98$, $e=0.95$, $d=80$  and $k_T=0.283128$.}
\label{wnabif}
\end{figure}

\begin{figure}[!h]
\centering
\begin{subfigure}[b]{.48\textwidth}
 \centering
\includegraphics[scale=0.55]{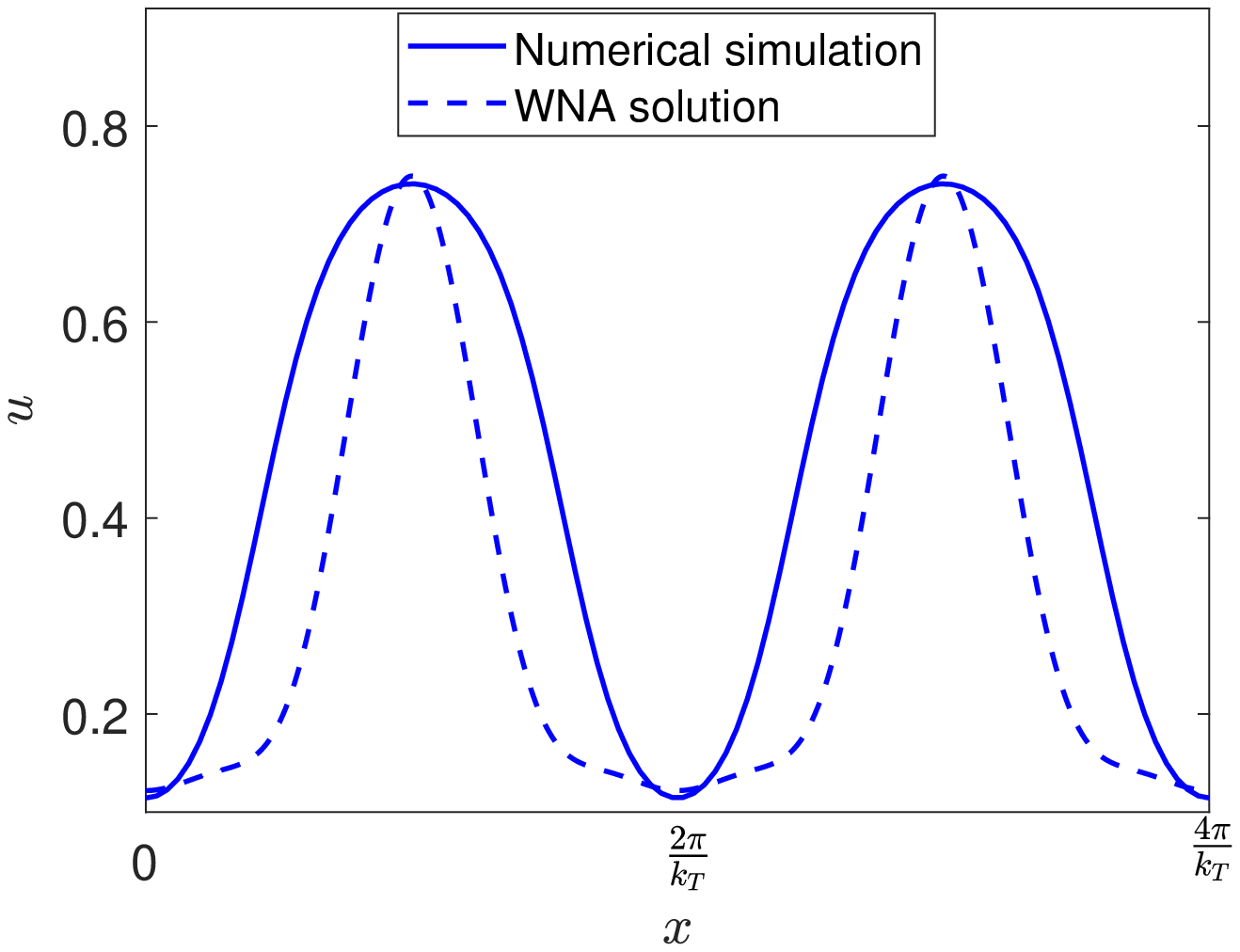}\\ 
 \caption{}
  \end{subfigure}
 \begin{subfigure}[b]{.48\textwidth}
 \centering
\includegraphics[scale=0.55]{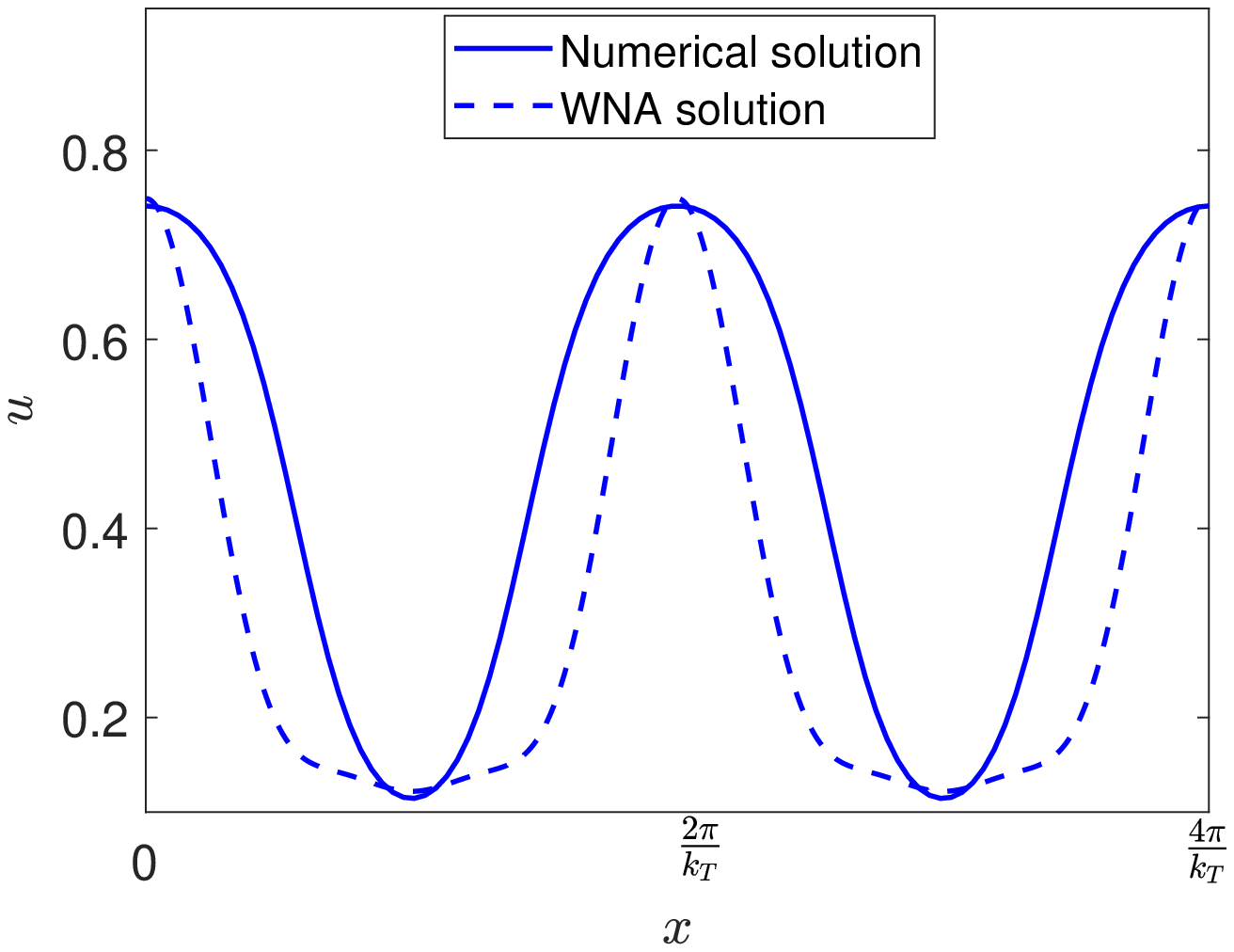}\\ 
 \caption{}
  \end{subfigure} 
\caption{Comparison of solution up to third order from weakly nonlinear analysis  and numerical solution: (a) lower brunch, (b) upper  brunch. The values of other parameters are  $a=7$, $b=5.65$, $f=0.98$, $e=0.95$, $d=80,$ $c=27$ and $k_T=0.283128$. }
\label{wnanumerical}
\end{figure}
We have plotted $u_\infty(0)$ %\textcolor{red}{$u$ is function of $x$ and $t$}
against the prey-taxis   parameter  $c$ in Fig. \ref{wnabif}. For $c>c_T,$ the system \eqref{pde}  has  two stable branches of pattern solution and the unstable homogeneous stationary solution (HSS). We plot both the stable  branches for $c=27$ and compared them with the numerical solutions in Fig. \ref{wnanumerical}. The amplitudes of both solutions are in good agreement. For  $21.1348<c<c_T,$ we have two unstable branches of pattern solution, two stable branches of pattern solution and the stable HSS. When  $21.1348<c<c_T,$ numerical computation under random perturbation about the homogeneous steady state leads to the HSS, but a backward continuation of pattern solution from $c>c_T$  leads to the stable pattern solution. Thus, the system \eqref{pde} shows hysteresis cycle \cite{dey2022analytical,tulumello2014cross} in this parameter range.

\subsection{Effect of prey-taxis $c$ on Turing solution} 
In subsection \ref{Turingsubsection}, we have observed the appearance of the Turing pattern when the prey-taxis coefficient $c$ crosses a threshold $c_T.$ If we consider the parameter values $a=7$, $b=5.65$, $f=0.95$, $e=0.95$ and $d=100$, then the corresponding Turing threshold is $c_T=31.4793$. We simulate the spatio-temporal model with a small amplitude spatial perturbation around the coexisting homogeneous steady state $E_1^*=(0.2016,0.1402)$.  The system shows a stable homogeneous steady state for $c<c_T$ and a non-homogeneous Turing solution for $c>c_T$.  We plot the stationary Turing solutions of $u$ and $v$ for $c=35$ and $c=50$ in Fig. \ref{c3550}. Since the predator population avoid high prey density areas, the peak of the prey population corresponds to the trough of the predator population. As the value of $c$ increases from $c=35$ to $c=50$, the peak prey density also increases due to the group defense of the prey population. Consequently, the trough of the predator population decreases further (see Fig. \ref{c3550}). 
\begin{figure}[!h]
\centering
\begin{subfigure}[b]{.48\textwidth}
 \centering
\includegraphics[scale=0.55]{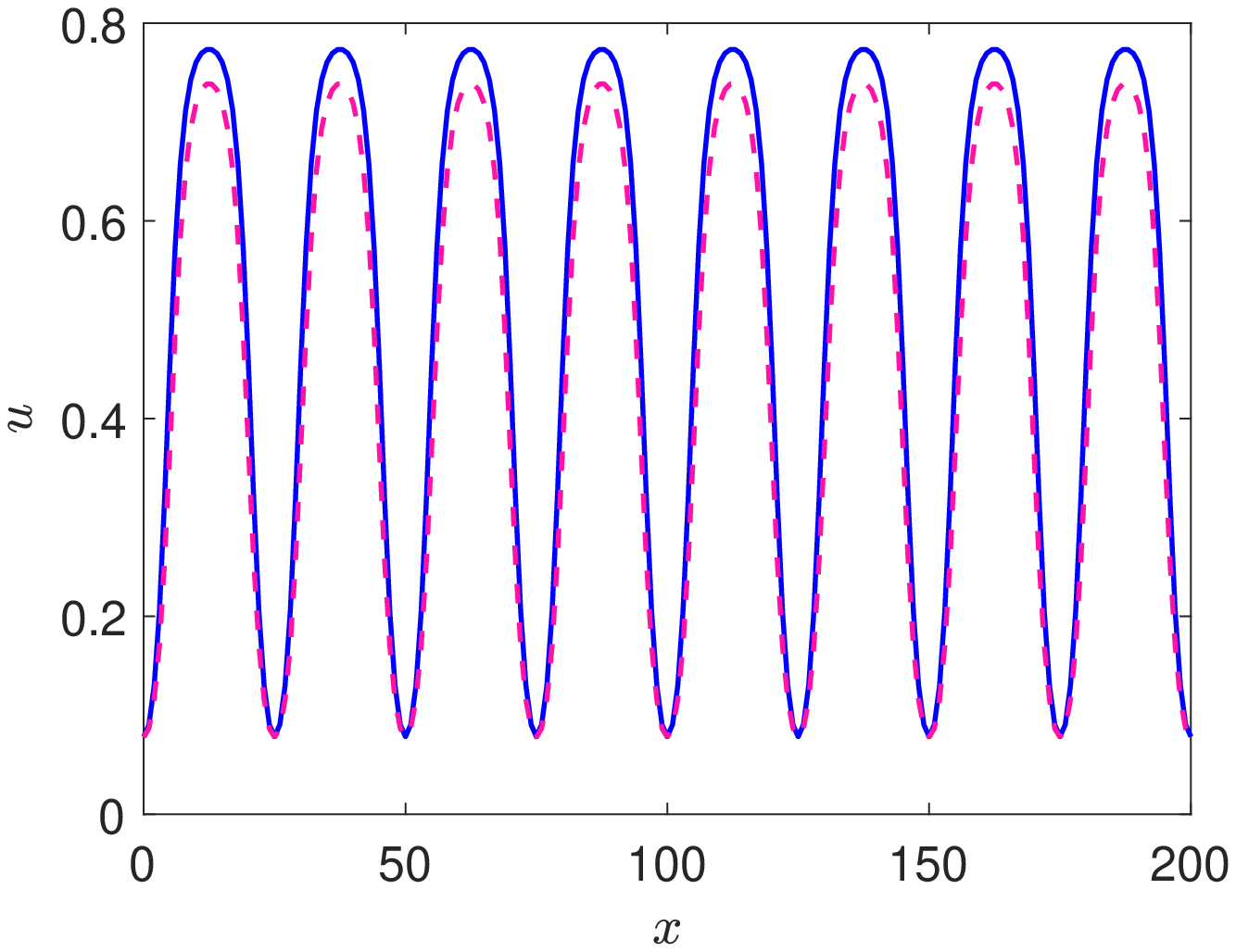}\\ 
 \caption{}
  \end{subfigure}
 \begin{subfigure}[b]{.48\textwidth}
 \centering
\includegraphics[scale=0.55]{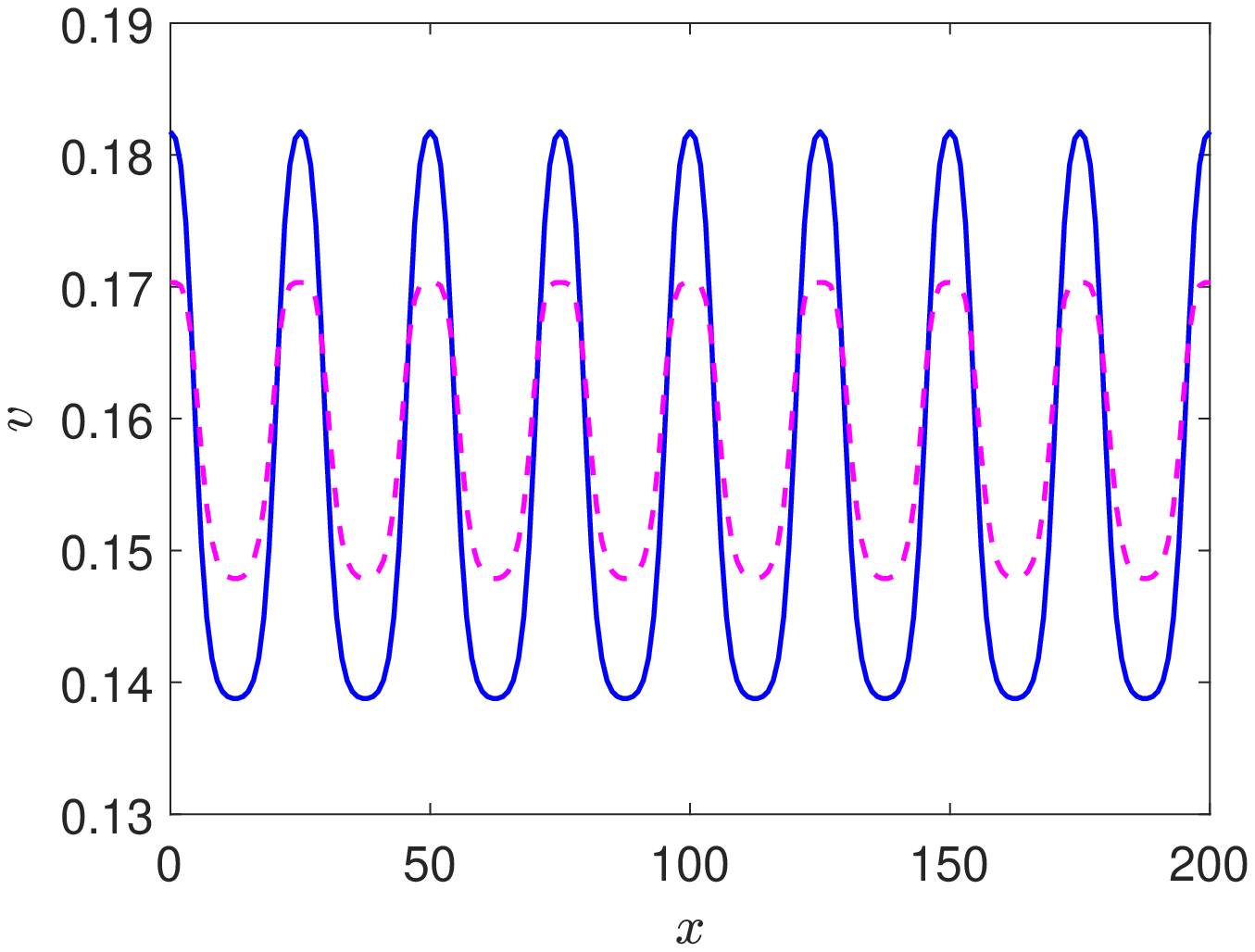}\\ 
 \caption{}
  \end{subfigure} 
\caption{Effect of prey-taxis $c$ on stationary Turing solutions: (a) prey solutions, (b) predator solutions. Here magenta dashed and blue solid curves denote the corresponding stationary Turing solutions for  $c=35$ and $c=50$, respectively. The values of other parameters are  $a=7$, $b=5.65$, $f=0.95$, $e=0.95$, and $d=100$. }
\label{c3550}
\end{figure}

Another important findings is the effects of the prey-taxis $c$ on the spatial averages of the populations. The spatial average of the prey population jumps from $0.2016$ for $c<c_T$ to $0.4516$ for $c=35$. Thus, the spatial average of the prey population becomes more than doubled due to Turing bifurcation. On the other hand, the predator population changes from $0.1402$ for $c<c_T$ to $0.1582$ for $c=35,$ a modest increase compared to prey population. Interestingly, the spatial average of the prey increases and that of predator decreases as $c$ is increased from $c=35$ to $c=50.$ Thus, an increase prey-taxis is beneficial to the prey species compared to the predator species.      

 \subsection{Spatio-temporal transient dynamics}

%\begin{figure}[!ht]
%\begin{subfigure}[b]%{.48\textwidth}
% \centering
%\includegraphics[scale=0.6]%{transient_surface_c_5_u.eps}\\ 
% \caption{}
%  \end{subfigure}
%\begin{subfigure}[b]%{.48\textwidth}
% \centering
%\includegraphics[scale=0.6]%{transient_surface_c_5_v.eps}\\ 
% \caption{}
%  \end{subfigure} 
%   \hspace{1.5em}
% \caption{Homogeneous steady-state solution. Other parameter values are $a=7,\;b=5.65\; e=0.95\;f=1.07,\; d=80$,  and $c=5$.}
%\label{HSS}
%\end{figure}
%\subsubsection{Taxis induced extinction}
%\textcolor{red}{need to say about initial conditions?}

Here, we discuss taxis induced transient solutions for the system \eqref{pde}. We choose the parameter values $a=7$, $b=5.65$, $f=1.07$, $e=0.95$, and $d=80$. For this parameter set, we calculate $c_T=5.552$ using \eqref{ct}. Note that the corresponding temporal system has bistability between coexisting equilibrium point $E_1^*$ and axial equilibrium point $E_1$. For $c=5$, the system \eqref{pde} is neither Turing unstable nor Hopf unstable. Hence, the system reaches the homogeneous steady-state solution $E_1^*$ after the initial transients for $c=5$ when simulated with random perturbation around $E_1^*$. However, for $c>c_T$, we observe Turing instability. Usually, the system settles down to a stationary non-homogeneous state in Turing unstable domain. However, for $c=6>c^T,$ we find that the system ultimately settles down to homogeneous steady predator-free state $E_1$ (see Fig. \ref{transition}).  The system initially evolves towards the Turing solution and the intermediate Turing solution satisfies the corresponding dispersion relation $H(k^2)<0$ for this parameter value. Though the Turing solution persists for considerable amount of time, but it ultimately becomes unstable leading to the homogeneous steady predator-free state. Interestingly, for lower values of $c$ with $c<c_T$,  coexistence occurs, whereas higher value of $c$ with $c>c_T$ leads to the extinction of the predator species [see Fig. \ref{transition}(c)]. Thus, the extinction described is induced by the prey-taxis.
\begin{figure}[!t]
\begin{subfigure}[b]{.45\textwidth}
 \centering
\includegraphics[scale=0.45]{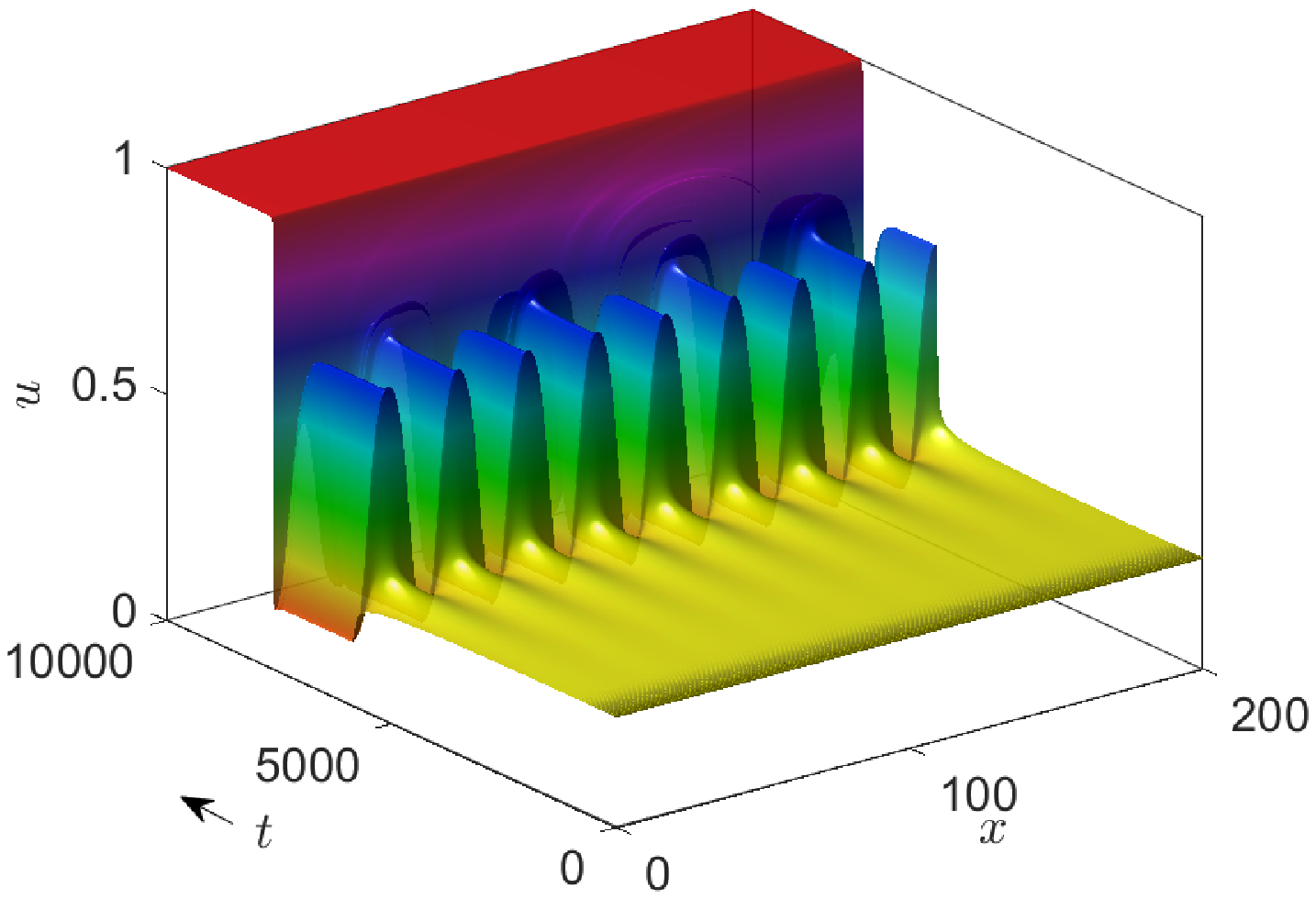}\\ 
 \caption{}
  \end{subfigure}
  \begin{subfigure}[b]{.45\textwidth}
 \centering
 \hspace*{-4cm}
\includegraphics[scale=0.45]{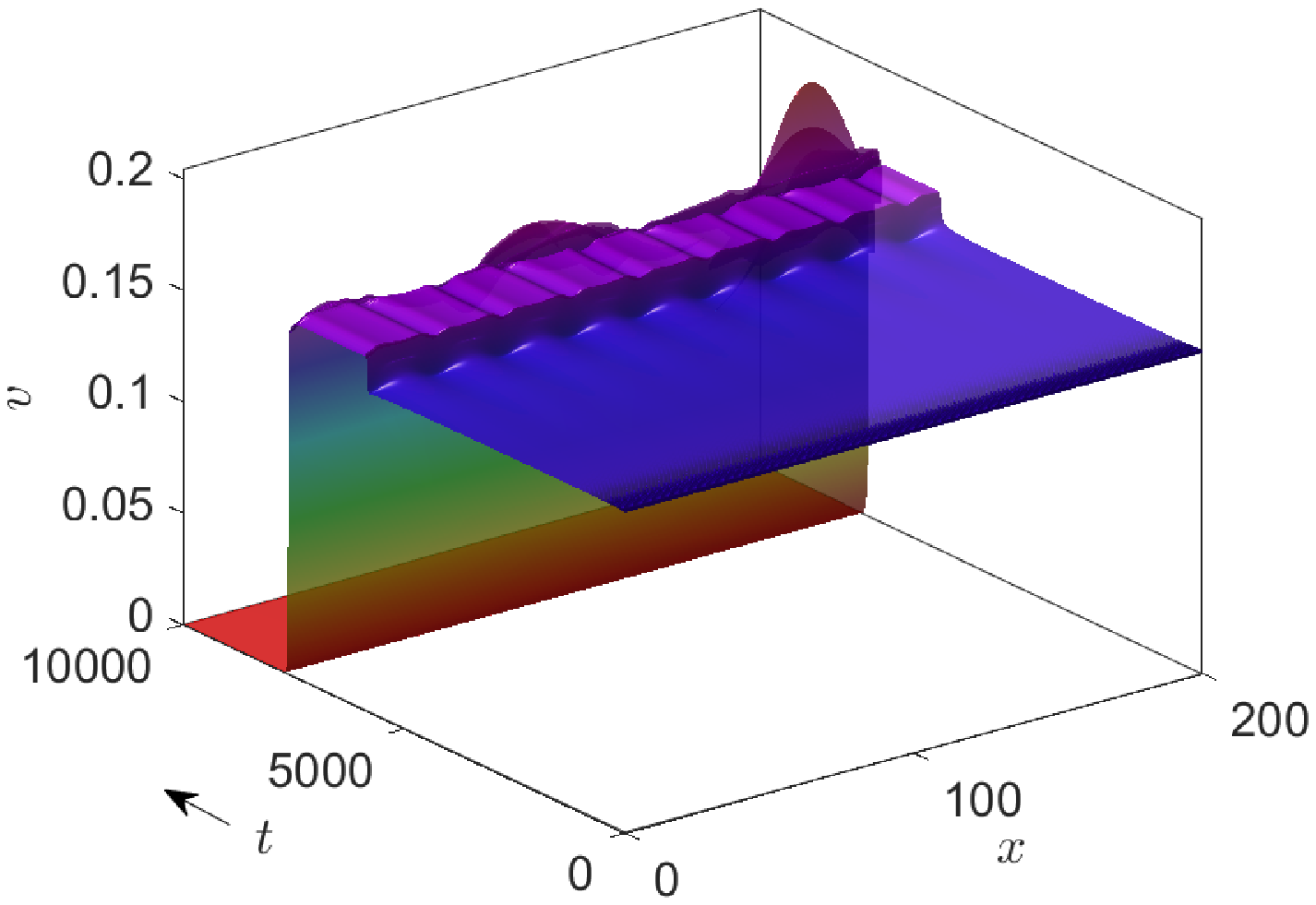}\\ 
 \caption{}
  \end{subfigure}
\begin{subfigure}[b]{.45\textwidth}
 \centering
\includegraphics[scale=0.45]{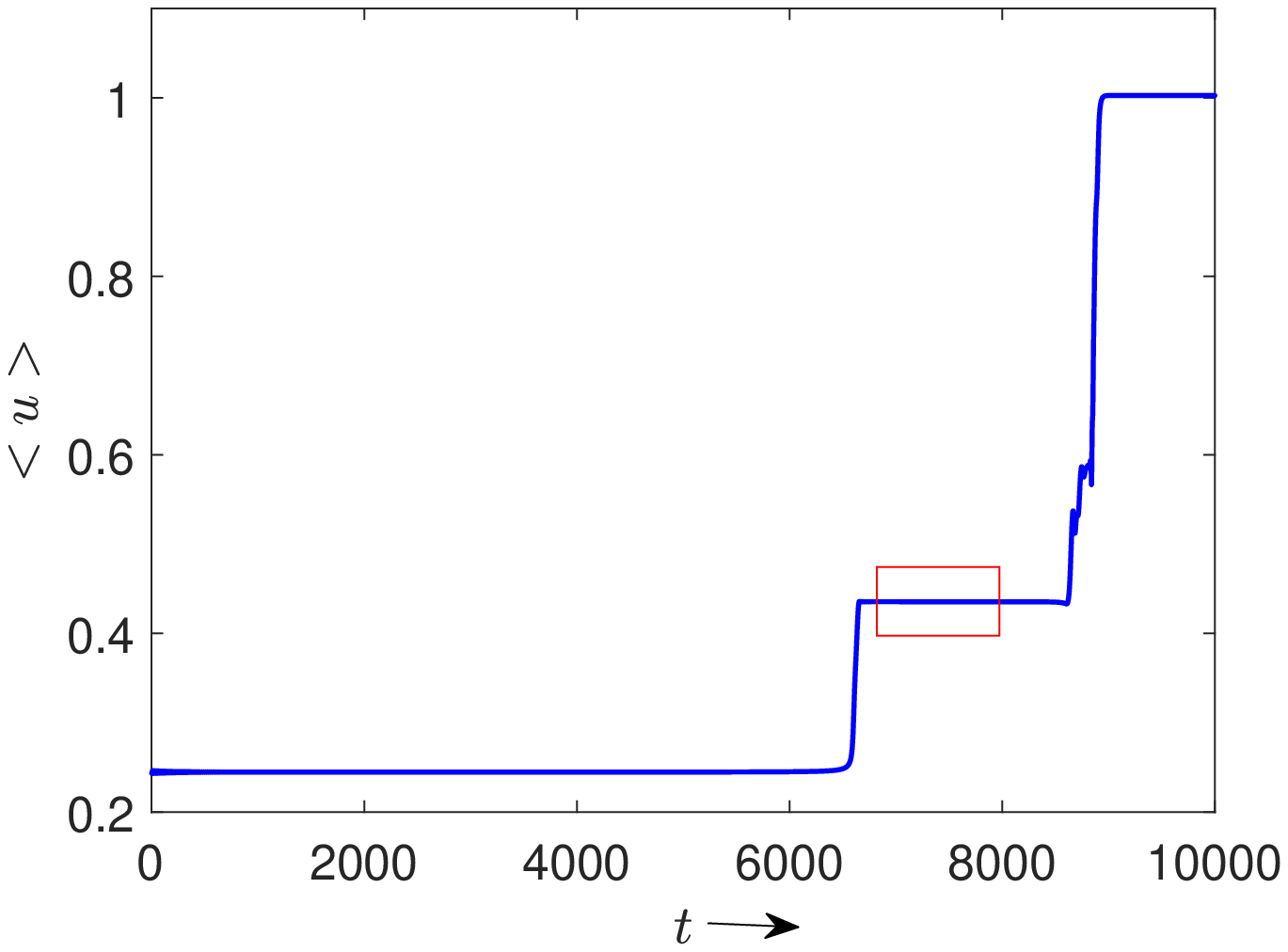}\\ 
 \caption{}
  \end{subfigure}
 \begin{subfigure}[b]{.45\textwidth}
 \centering
\includegraphics[scale=0.45]{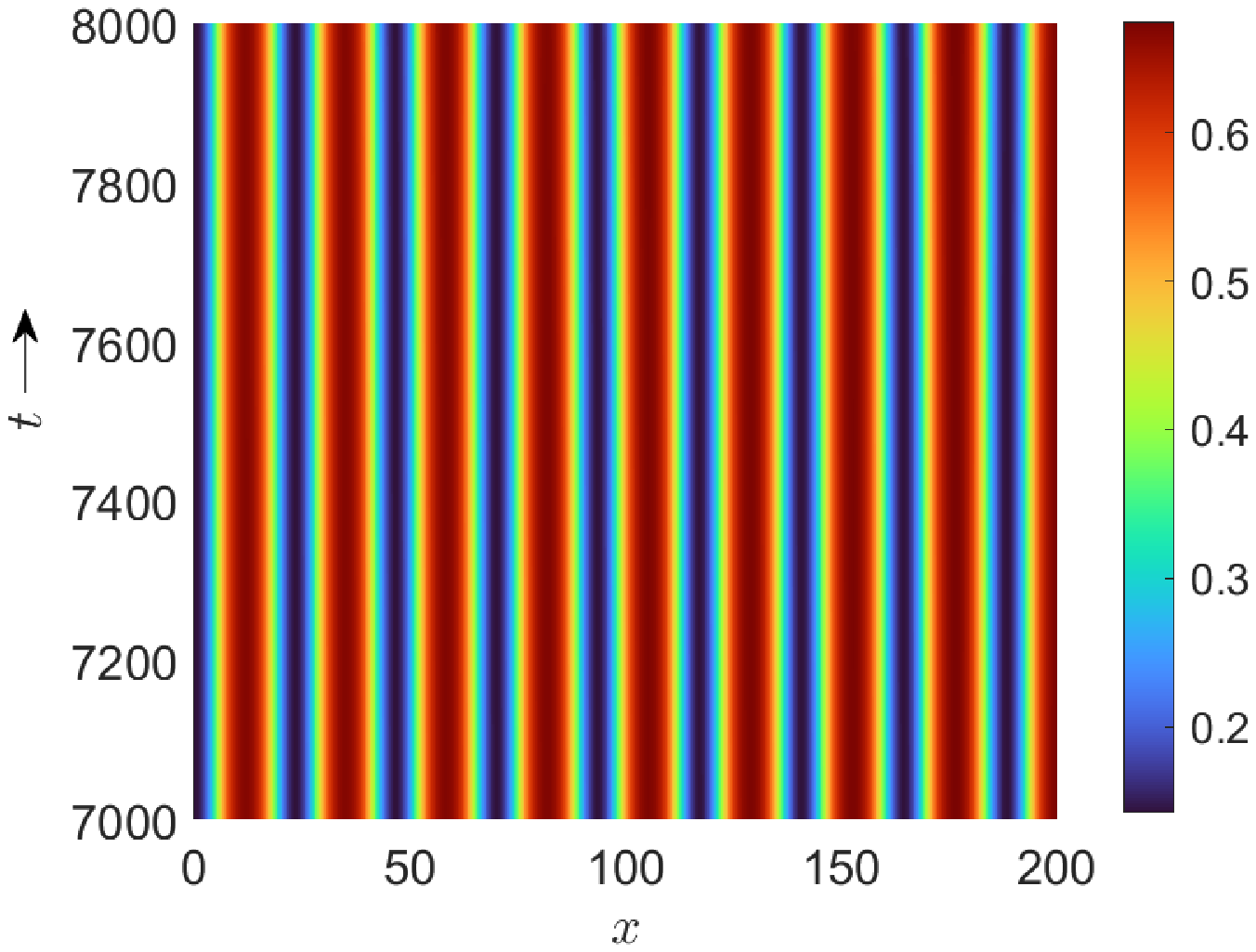}\\ 
 \caption{}
  \end{subfigure} 
%   \hspace{1.5em}
 \caption{Transient dynamics leading to homogeneous steady predator-free state for $c=6$: (a) space-time plot of the prey species, (b) space-time plot of the predator species, (c) spatial average of prey species against time, (d) persistence of Turing solution during intermediate stage. Other parameter values are $a=7,\;b=5.65,\; e=0.95\;f=1.07,$ and $d=80$.} 
\label{transition}
\end{figure}

\begin{figure}[!h]
\begin{subfigure}[b]{.48\textwidth}
 \centering
\includegraphics[scale=0.6]{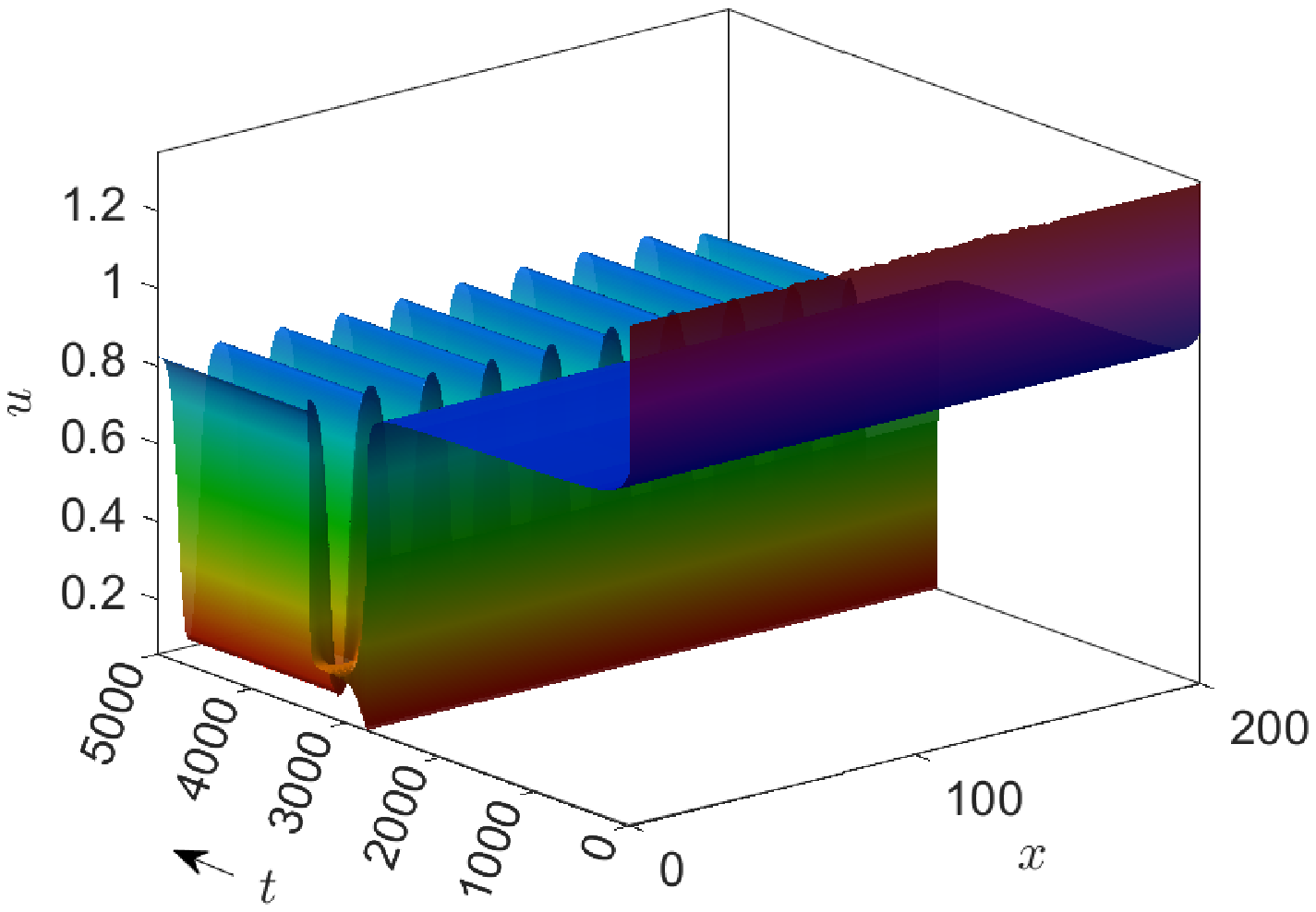}\\ 
 \caption{}
  \end{subfigure}
 \begin{subfigure}[b]{.48\textwidth}
 \centering
\includegraphics[scale=0.6]{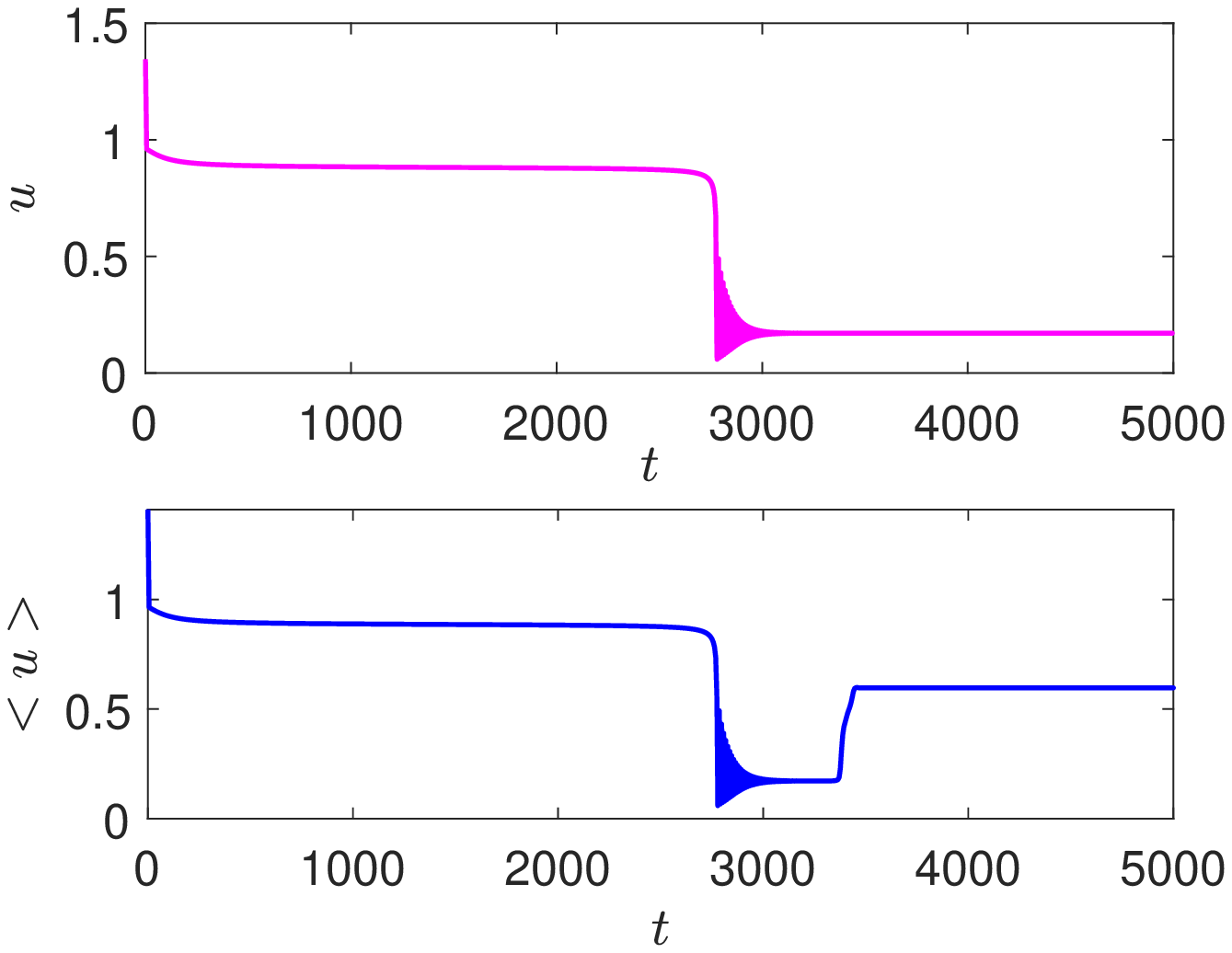}\\ 
 \caption{}
  \end{subfigure} 
%   \hspace{1.5em}
 \caption{Effects of the diffusion and taxis on the long transient dynamics of the temporal model: (a) space-time plot of the prey species $u,$ (b) variation of the temporal solution $u$ of \eqref{ode} and the spatial average of the spatio-temporal model \eqref{pde} against time. The values of other parameters are  $a=7,\;b=7,\; e=0.95,\;f=0.8013,\; d=80$,  and $c=40$.}
\label{sptrans}
\end{figure}

Next, we examine the effects of the diffusion and taxis on the long transient and hysteresis states  observed in the temporal model (see Fig. \ref{steadystatetransient}). We consider the same parameter values as in Fig. \ref{steadystatetransient}(a) together with the diffusion parameter $d=80$, taxis parameter $c=40$, and a domain of length $L=200$. The initial condition is chosen as
$$u(x,0)=\begin{cases}
    1.4+0.01\xi(x), \text{ for } |x-100|<50, \\
    1.4,\qquad \qquad \;\;\,\text{ otherwise,}
\end{cases},\; \; v(x,0)=\begin{cases}
    0.05+0.01\xi(x), \text{ for } |x-100|<50, \\
    0.05, \qquad \qquad \;\;\,\text{ otherwise.}\end{cases}$$     
    Here, $\xi(x)$ is the Gaussian noise function. We choose this specific initial condition in order to compare our solution with the temporal solution. The corresponding spatio-temporal dynamics is shown in Fig. \ref{sptrans}. We plot the spatial average $<u>$ of the prey population for the system \eqref{pde} against time in Fig. \ref{sptrans}(b). We have also shown their corresponding variation $u$ of the temporal model \eqref{ode}. It shows that the transient time for both systems is almost the same, although their final destination is different. In contrast to the temporal system, the space-time solution evolves to a Turing pattern due to Turing instability. 
    
    Finally, we discuss the effects of the diffusion and taxis on the long oscillatory transient dynamics observed in the temporal model [see Fig. \ref{global_transient}(b)]. For this purpose, we consider the initial condition similar to that mentioned above together with $d=80$ and $c=40$. However, the long oscillatory transient dynamics does not appear for the spatio-temporal solution. Figure \ref{sptransientdyn} shows that the predator becomes extinct in a short time and the solution approaches homogeneous steady predator-free state $E_1$.

\begin{figure}[!h]
\centering
\includegraphics[scale=0.6]{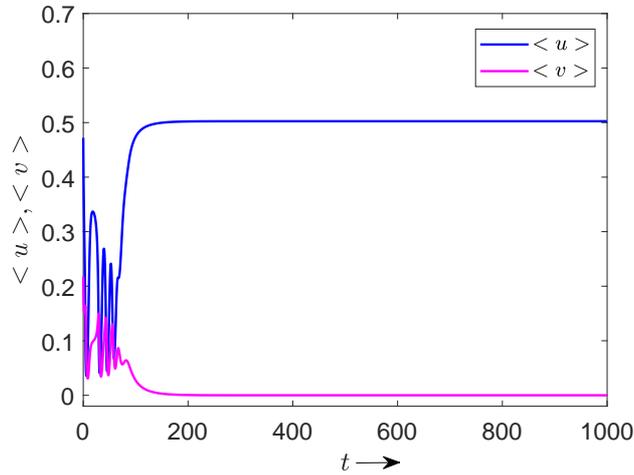}\\
\caption{Variation of the spatial averages of the prey and predator populations against time. The values of other parameters are  $a=7,\;b=7,\; e=0.95, \;f=0.867682, \; d=80$,  and $c=40$.  }
\label{sptransientdyn}
\end{figure}

\begin{figure}[!h]
\begin{subfigure}[b]{.33\textwidth}
 \centering
\includegraphics[scale=0.4]{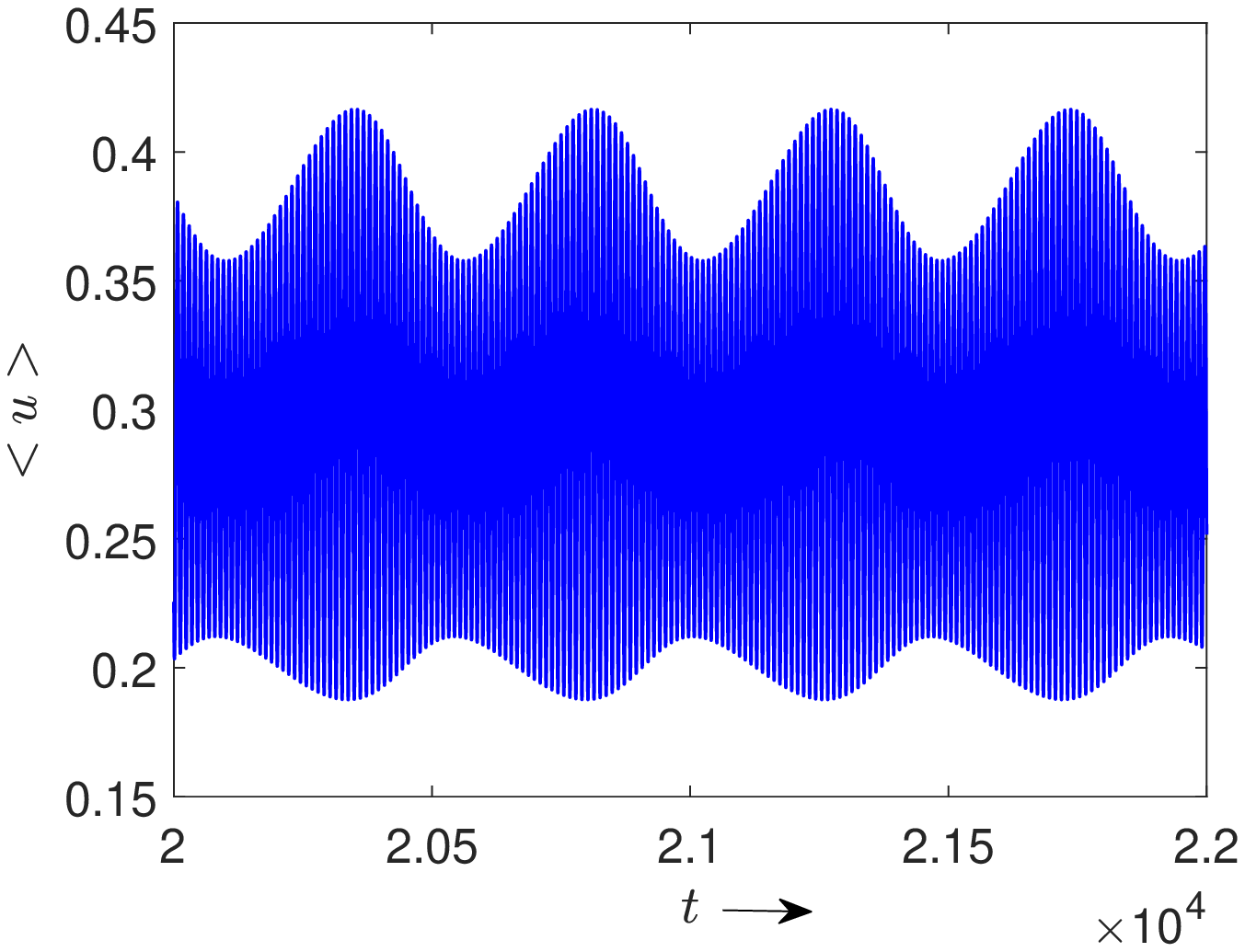}\\ 
 \caption{}
  \end{subfigure}\begin{subfigure}[b]{.33\textwidth}
 \centering
\includegraphics[scale=0.4]{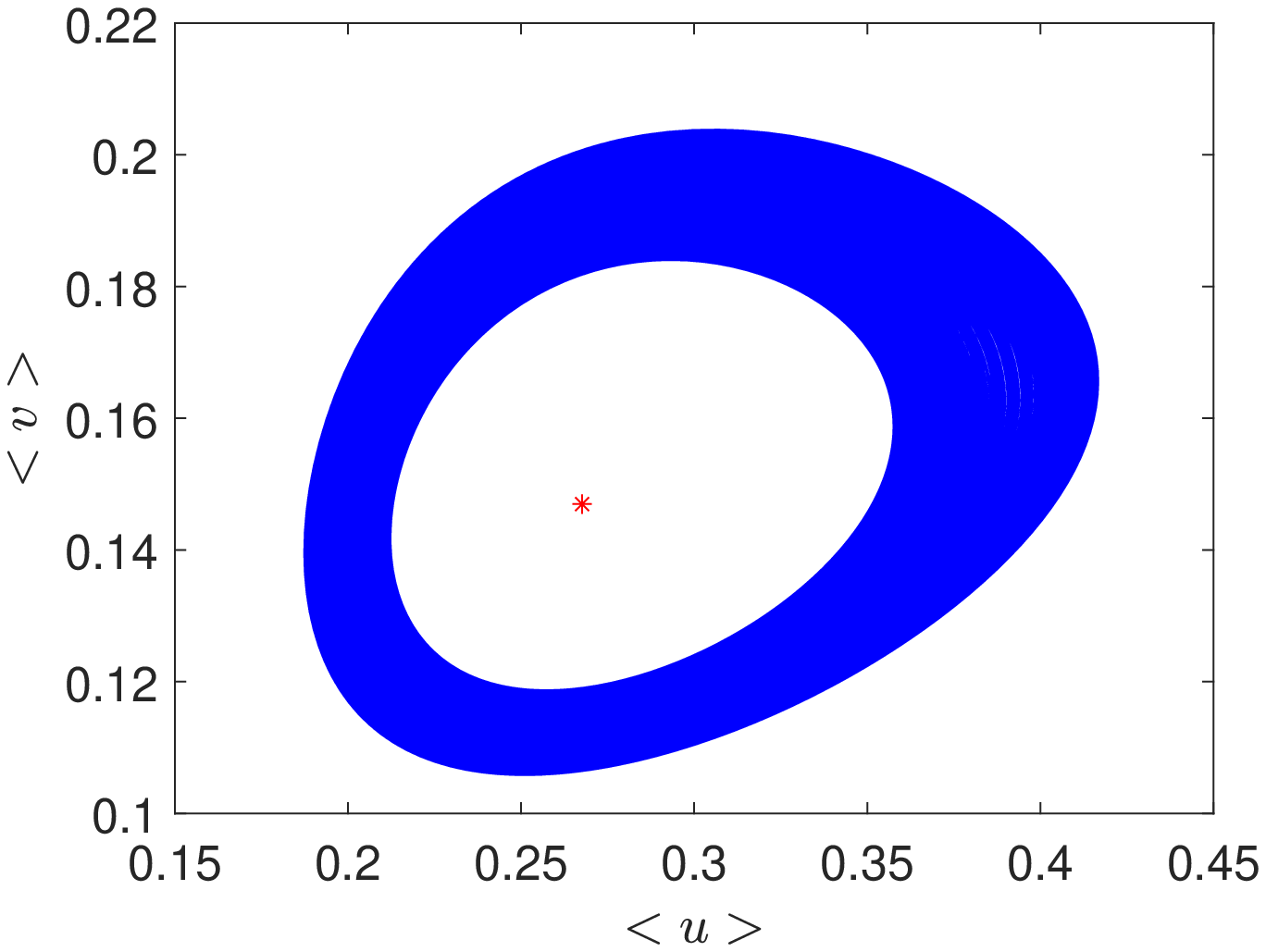}\\ 
 \caption{}
\end{subfigure}
 \begin{subfigure}[b]{0.33\textwidth}
 \centering
\includegraphics[scale=0.4]{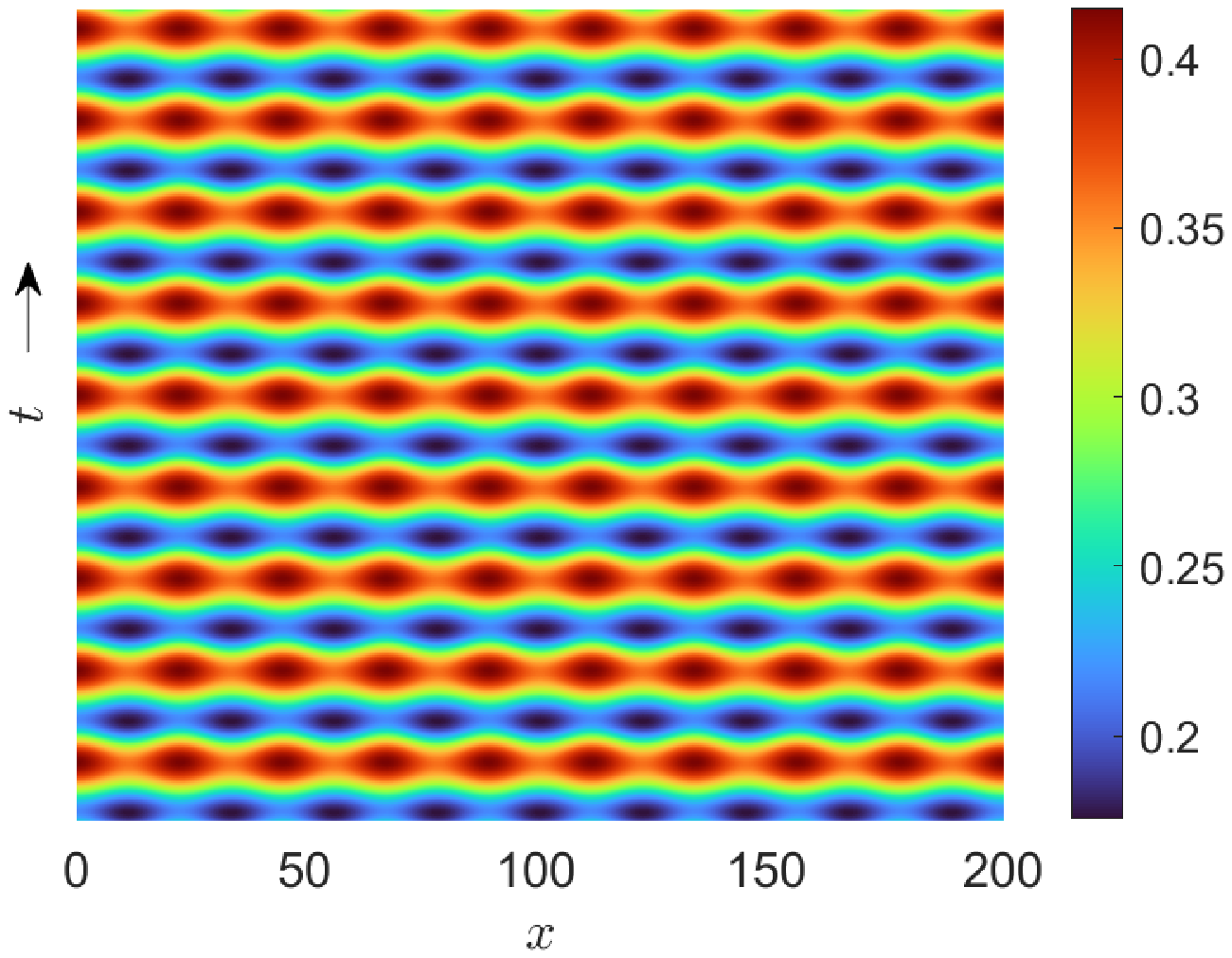}\\ 
 \caption{}
  \end{subfigure}
  
%   \hspace{1.5em}
 \caption{Time oscillatory and spatially non-homogeneous solution: (a) variation of spatial average $<u>$ against time, (b) phase portrait of spatial averages $<u>$ and $<v>,$ (c) contour plot of space-time solution $u$ after the initial transient. The values of other parameters are  $a=7,\;b=5.65,\; e=0.95\;f=1.12,\; d=40$,  and $c=5$.} 
\label{osc}
\end{figure}

\subsection{Oscillatory solution}
Apart from the homogeneous and non-homogeneous steady-state solutions, the spatio-temporal system \eqref{pde} also exhibits oscillatory solutions in the Hopf region. The parameter values $a=7,\;b=5.65,\; e=0.95,\;f=1.12,\; d=40$, and $c=5$ lie in the Hopf region.  Note that the temporal system \eqref{ode} exhibits bistability between the periodic solution around $E_1^*$ and the axial equilibrium $E_1$ with the associated temporal parameters from this set. The corresponding spatio-temporal system \eqref{12} shows an oscillatory in time and  non-homogeneous in space solution shown in Fig. \ref{osc}. The phase portrait of spatial averages $<u>$ and $<v>$ [see  Fig. \ref{osc}(b)] confirms the quasi-periodic nature of the solution. %The reason behind the non-homogeneous behavior in the Hopf region is actually the subcritical case of the amplitude equation. 
The parameter set moves into the Turing-Hopf region with an increase in $c$, and the predator species becomes extinct leading to the homogeneous steady predator-free state. This final state corresponds to the axial equilibrium of the bistable states and it is similar to the taxis-induced extinction discussed earlier. The parameter set moves well inside the Hopf region with a decrease in $c$. The system now exhibits a homogeneous oscillatory solution  corresponding to the stable periodic solution around $E_1^*$.% of the bistable states. \textcolor{red}{check the writing}

%\subsection{Two-dimension numerical results}
% \begin{figure}[H]
% \begin{subfigure}[b]{.48\textwidth}
%  \centering
% \includegraphics[scale=0.6]{cp_2d_d_24_c_40_u.eps}\\ 
%  \caption{}
%   \end{subfigure}
%  \begin{subfigure}[b]{.48\textwidth}
%  \centering
% \includegraphics[scale=0.6]{cp_2d_d_24_c_40_v.eps}\\ 
%  \caption{}
%   \end{subfigure} \caption{Color-plot solution of   Turing solution in two dimension habitat at time $t=2000$ with random initial condition. Other parameter values are $a=7,\;b=5.65,\; e=0.95\;f=0.98,\; d=24$,  and $c=40$.} 
% \label{2drand}
% \end{figure}

%   \hspace{1.5em}
% \subsection{Turing solution in 2D}
% Fig. \ref{2dcos} supports the group-induced prey-taxis mechanism in two-dimensional habitat, i.e., the predator density is higher in lower prey density areas as they prefer lower prey density areas for predation. 
\section{Conclusion} \label{discussion}
The primary goal of our work is to investigate the effects of group defense among prey species on the spatio-temporal distribution of both the prey and predator populations modelled by a Bazykin-type prey-predator model. The group defense of the prey species is incorporated using a non-monotonic functional response in the temporal model. %\textcolor{red}{do we need reference here again?} 
Due to group defense, the predator species avoid areas with high prey density. We also include repellent prey-taxis  to  take into account this response of the predator species. The temporal model \eqref{ode} exhibits a range of complex  dynamics, including bistability, tristability, global bifurcations, and long transient dynamics. The corresponding spatio-temporal system \eqref{pde} possesses global bounded solutions. Existence of non-homogeneous stationary solution above the Turing threshold $c_T$ has been established using WNA. Further, exhaustive numerical simulations have been performed to validate the results of WNA and to investigate long transient dynamics present in the spatio-temporal systems.

We have used bifurcation analysis on the temporal system \eqref{ode} to identify various local and global bifurcations by considering $b$ and $f$ as bifurcation parameters. Note that the parameters $b$ and $f$ depend on the strength of group defense of prey species and the death rate of predator species, respectively. Through multiple one-parametric bifurcation diagrams and a two-parametric bifurcation diagram, we have observed that a higher strength of group defense or a larger death rate of predator species can lead to predator extinction in the coexisting dynamics. However, the initial population plays a crucial role in determining the final state of the system. In the case of bistability and tristability scenarios, the system can lead to co-existing steady-state dynamics or co-existing oscillatory dynamics, or a predator extinction state depending on the initial condition.

We have observed long transient dynamics in the temporal system in which the system spends a considerable time around a stationary or oscillatory state before reaching the final state. These dynamics depend on the initial conditions and  parameter values. When the group defense parameter $b$ is above the cusp point threshold value, then an increase in predator's death rate $f$ causes appearance of two new coexisting equilibria due to a saddle-node bifurcation. When $f$ is decreased slightly, these two  new coexisting equilibria disappear and a narrow region develops between two non-trivial nullclines. When the initial prey or predator population is small,  the trajectories are constrained to move through this narrow region leading to the long stationary transition dynamics [see Fig. \ref{steadystatetransient}(b)]. Also, a significant change is observed between the population levels in the transient state and the final state. A similar long transient, but oscillatory state, is observed due to saddle-node bifurcation of limit cycle, which is a global bifurcation. In contrast to the  long stationary transient discussed above, the final steady state is approximately the average of population over an oscillatory cycle in this case (see Fig. \ref{global_transient}).

A homogeneous steady-state of the spatio-temporal system \eqref{pde}, which corresponds to a stable co-existence equilibrium of the corresponding temporal model, can become Turing unstable when the prey-taxis coefficient $c$ crosses a threshold $c_T.$ A Turing solution refers to a stationary periodic in space solution. A Turing pattern develops and persists in the system when the corresponding temporal model has a single stable co-existing equilibrium point and all the other equilibria are unstable. In this case, we have employed WNA to derive Turing solution for $c$ near $c_T$ and the theoretical findings have been validated using numerical simulations (see Fig. \ref{wnanumerical}). However, if the temporal model has bistability or tristability, then the Turing solution may not persist. We have shown an example (see Fig. \ref{transition}) where the spatio-temporal system initially approaches a Turing solution but ultimately settles down to predator-extinction state. The reason for this behaviour is the presence of a stable coexisting and a stable predator-free equilibria of the corresponding temporal model. The perturbations around the homogeneous state corresponding to the coexisting equilibrium grow in magnitude and approach a Turing solution, but the system ultimately settles down to the homogeneous state corresponding to predator-free equilibrium. Similar to the temporal transients, the final state of the solution is unpredictable in this case too. Generally, an increase in group defense-induced prey-taxis can lead to predator extinction scenarios from the homogeneous stable coexistence state. 

% \begin{figure}[!ht]
% \begin{subfigure}[b]{.48\textwidth}
%  \centering
% \includegraphics[scale=0.6]{cp_2d_d_24_c_40_u_domain_50.eps}\\ 
%  \caption{}
%   \end{subfigure}
%  \begin{subfigure}[b]{.48\textwidth}
%  \centering
% \includegraphics[scale=0.6]{cp_2d_d_24_c_40_v_domain_50.eps}\\ 
%  \caption{}
%   \end{subfigure} \caption{Color-plot solution of   Turing solution in two dimension habitat. Other parameter values are $a=7,\;b=5.65,\; e=0.95\;f=0.98,\; d=24$,  and $c=40$.} 
% \label{2dcos}
% \end{figure}

%\textcolor{red}{last paragraph %needs to be re-written.} %\textcolor{green}{Is it okay?}
%In addition to the investigation of Turing transient dynamics, we also examine how temporal transient dynamics behave in the extended reaction-advection-diffusion equation. Fig. \ref{sptrans} shows that the spatio-temporal transient period is the same as the temporal transient period. However, the spatio-temporal resultant state is not the homogeneous steady state $E_1^*$. Due to the Turing instability, non-constant stationary solutions appear in the system. Therefore, we can conclude that the spatio-temporal model is partially guided by the temporal system. Overall, the prey-taxis of predator species plays a significant role in pattern formation as well as their survival.

We have also investigated whether the long transient dynamics observed in the temporal model persist in the extended  spatio-temporal system. Almost similar long stationary transient dynamics have been found in the presence of diffusion and taxis (see Fig. \ref{sptrans}) but the final state consists of a Turing pattern instead of the homogeneous steady state $E_1^*$. This is due to the taxis parameter $c$ lying in the Turing domain corresponding to the homogeneous state $E_1^*.$  But in the case of transient oscillatory dynamics of the temporal model,  the spatio-temporal system does not exhibit any transient dynamics. In this case, the system rapidly  reaches the predator extinction homogeneous steady state $E_1$ (see Fig. \ref{sptransientdyn}). Thus, the persistence of long transient dynamics of the temporal model depends on the parameter values of the spatio-temporal model. Another important observation is the appearance of non-homogeneous oscillatory pattern solutions for certain parameter sets lying in the Hopf region (see Fig. \ref{osc}). Increasing the value of taxis parameter $c$ towards the Turing-Hopf region leads to the appearance of a transient Turing pattern but the system ultimately  settles down to the homogeneous steady state $E_1.$ On the other hand, the  non-homogeneous oscillatory pattern solution becomes a homogeneous oscillatory pattern solution with decreasing $c$ towards the interior of the Hopf region. 

In summary, the prey-taxis is beneficial to the prey species compared to the predator species. It significantly influences the survival of the predator species. A stable coexisting state of the temporal model can become homogeneous predator-free state in the presence of prey-taxis. Thus, the prey-taxis plays a crucial role in the pattern formation scenario of a spatio-temporal prey-predator system.  \\[1em]
\noindent\textbf{Data Availability} The authors declare that no experimental data were used in the preparation of this manuscript. %The data sets generated during and/or analysed during the current study are available from the corresponding author upon reasonable request.
\\[1em]
\large{\textbf{Ethics declarations}}\\
\textbf{Conflict of interest} The authors declare that they have no conflict of interest.\\
\section*{Appendix A}
Using Gagliardo–Nirenberg inequality \cite{friedman1969partial} %\textcolor{red}{a reference here?} 
with $v\geq 0,$ we obtain 
\begin{equation*}
    \begin{split}
        \int\limits_\Omega v^q dx&= \int\limits_\Omega (v^{\frac{q}{2}})^2 dx\leq a_1\left(||\nabla v^\frac{q}{2}||_2^{\frac{2nq-2n}{nq-n+2}}. ||v^\frac{q}{2}||_\frac{2}{q}^\frac{4}{nq-n+2}+||v^\frac{q}{2}||_\frac{2}{q}^2\right) \\
        &= a_1\left(||\nabla v^\frac{q}{2}||_2^{2\theta}.||v^\frac{q}{2}||_\frac{2}{q}^{2(1-\theta)}+||v^\frac{q}{2}||_\frac{2}{q}^2\right),
    \end{split}
\end{equation*}
where $a_1>0$ and $0<\theta=\frac{nq-n}{nq-n+2}<1.$ Using Young's inequality, we have $$||\nabla v^\frac{q}{2}||_2^{2\theta}.||v^\frac{q}{2}||_\frac{2}{q}^{2(1-\theta)}\leq a_2 \theta ||\nabla v^\frac{q}{2}||_2^2+a_2^{\frac{\theta}{\theta-1}}(1-\theta)||v^\frac{q}{2}||_\frac{2}{q}^2,$$ for any  $a_2>0.$
Thus we have \begin{equation*}
    \begin{split}
        \int\limits_\Omega v^q dx&\leq a_1\left(a_2 \theta ||\nabla v^\frac{q}{2}||_2^2+a_2^{\frac{\theta}{\theta-1}}(1-\theta)||v^\frac{q}{2}||_\frac{2}{q}^2+||v^\frac{q}{2}||_\frac{2}{q}^2\right)\\
        &=a_1a_2 \theta||\nabla v^\frac{q}{2}||_2^2 +a_1\left(a_2^{\frac{\theta}{\theta-1}}(1-\theta)+1\right)||v^\frac{q}{2}||_\frac{2}{q}^2\\
        &= \varepsilon_1 ||\nabla v^\frac{q}{2}||_2^2 + \varepsilon_2 ||v||_1^q,
    \end{split}
\end{equation*}
for any $\varepsilon_1>0$ and for some $\varepsilon_2>0$ depending on the $\varepsilon_1.$  Since $||v||_1\leq B,$ we obtain $$\di{\int\limits_\Omega v^q dx\leq\varepsilon_1||\nabla v^{\frac{q}{2}}||_2^2+M} \text{ for any }\varepsilon_1>0 \text{ and for some } M=\varepsilon_2 B^q>0.$$

\newpage

\end{document}